\newtheorem{theorem}{Theorem}[section]
\newtheorem{proposition}[theorem]{Proposition}
\newtheorem{lemma}[theorem]{Lemma}
\newtheorem{conjecture}[theorem]{Conjecture}
\newtheorem{definition}[theorem]{Definition}
\newtheorem{problem}[theorem]{Problem}
\newtheorem{question}[theorem]{Question}
\newcommand{\E}{\mathbb{E}}
\title{The length of an $s$-increasing sequence of $r$-tuples}
\author{W.T.Gowers and J.Long}
\begin{document}
\maketitle
\begin{abstract}
We prove a number of results related to a problem of Po-Shen Loh~\cite{inctrips}, which is equivalent to a problem in Ramsey theory. Let $a=(a_1,a_2,a_3)$ and $b=(b_1,b_2,b_3)$ be two triples of integers. Define $a$ to be \emph{2-less than} $b$ if $a_i<b_i$ for at least two values of $i$, and define a sequence $a^1,\dots,a^m$ of triples to be \emph{2-increasing} if $a^r$ is 2-less than $a^s$ whenever $r<s$. Loh asks how long a 2-increasing sequence can be if all the triples take values in $\{1,2,\dots,n\}$, and gives a $\log_*$ improvement over the trivial upper bound of $n^2$ by using the triangle removal lemma. In the other direction, a simple construction gives a lower bound of $n^{3/2}$. We look at this problem and a collection of generalizations, improving some of the known bounds, pointing out connections to other well known problems in extremal combinatorics, and asking a number of further questions.
\end{abstract}

\section{Introduction}

This paper concerns a deceptively simple problem formulated recently by Po-Shen Loh~\cite{inctrips}. As he put it in an interview \cite{lohinterview}, ``I thought it had to be trivial, it's so easy to describe, surely it will fall from some simple argument like the pigeonhole principle, and I will be done. I wasn't done in one hour, actually I'm still not done, and in fact there have been quite a few people who tried it and they also are not done." 

We too are not done, but we have made some partial progress. Along the way, like Loh, we have noticed interesting connections to other parts of combinatorics, which we shall describe later and which lend support to Loh's view that his problem is, despite its simplicity, a deep and interesting one. Two other recent papers about it are \cite{TWY} and \cite{W}.

\subsection{2-increasing sequences of triples}

We start by defining a simple relation on triples of integers.

\begin{definition} Let $a=(a_1,a_2,a_3)$ and $b=(b_1,b_2,b_3)$ be two triples of integers. Say that $a$ is \emph{2-less than} $b$, or $a<_2b$, if $a_i<b_i$ for at least two coordinates $i$. 
\end{definition}
\noindent For example, $(3,3,9)<_2(5,6,1)<_2(7,7,7)<_2(7,8,9)$, but $(1,2,3)$ is not 2-less than $(1,2,4)$. 

We think of this relation as a sort of ordering, even though in fact it is not, since it is not transitive: for instance $(1,2,3)<_2(2,3,1)<_2(3,1,2)<_2(1,2,3)$. (This is the Condorcet paradox, and indeed Loh notes connections between his problem and questions in voting theory.) With that in mind, we make a further definition.

\begin{definition}\label{2inc}
A sequence $(a^i)$ of integer triples is \emph{2-increasing} if for all $i<j$ we have $a^i<_2a^j$.
\end{definition}
\noindent Note that because of the lack of transitivity, this is strictly stronger than saying that $a^i<_2a^{i+1}$ for each $i$.

We are now ready to state Loh's problem. Here and throughout the paper we write $[n]$ for the set $\{1,2,\dots,n\}$. 
\begin{problem}\label{PS1}
For each $n$, let $F(n)$ be the maximal length of a 2-increasing sequence of triples with each coordinate belonging to $[n]$. How does $F(n)$ grow with $n$?
\end{problem}
An instructive example is the following sequence of length 8, which is of maximal length when $n=4$:
	\begin{center}
		(1,1,1)\\
		(1,2,2)\\
		(2,1,3)\\
		(2,2,4)\\
		(3,3,1)\\
		(3,4,2)\\
		(4,3,3)\\
		(4,4,4)\\
	\end{center}

The following proposition gives the easy bounds for general $n$.
\begin{proposition}\label{triv}
For all $n$ we have $F(n)\le n^2$. Moreover, whenever $n$ is a perfect square we have $F(n)\ge n^{3/2}$.
\end{proposition}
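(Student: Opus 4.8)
The plan is to treat the two bounds separately; the first is immediate, and essentially all the content lies in choosing the right construction for the second.

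\textbf{Upper bound.} I would prove $F(n)\le n^2$ by observing that the projection of a $2$-increasing sequence onto any two fixed coordinates is injective. Concretely, suppose $a^1,\dots,a^m$ is $2$-increasing and take $i<j$. Then $a^i<_2 a^j$, so at least two of the three coordinates strictly increase from $a^i$ to $a^j$; in particular it is impossible to have both $a^i_1=a^j_1$ and $a^i_2=a^j_2$, since then only the third coordinate could increase. Hence $i\mapsto(a^i_1,a^i_2)$ is an injection from $\{1,\dots,m\}$ into $[n]^2$, giving $m\le n^2$. (This is the only place the full strength of Definition~\ref{2inc} — as opposed to just consecutive comparisons — is used, and it is not really needed even here, since the argument already applies to the pair $(i,j)$.)

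\textbf{Lower bound.} Write $n=k^2$. The plan is to build a sequence of length $k^3$ indexed by triples $(x,y,z)\in[k]^3$ taken in lexicographic order, sending $(x,y,z)$ to the triple
\[
\bigl(k(x-1)+y,\ \ k(x-1)+z,\ \ k(y-1)+z\bigr),
\]
whose entries all lie in $[k^2]=[n]$. (This is exactly the pattern underlying the length-$8$ example displayed above, with $k=2$.) The design principle is that the three output coordinates correspond to the three two-element subsets of the index positions $\{x,y,z\}$, each encoded as a base-$k$ two-digit number with the \emph{earlier} index variable as the high digit. I would then verify $2$-increasingness by splitting on which index position first differs between $(x,y,z)<_{\mathrm{lex}}(x',y',z')$: if $x<x'$, the first two output coordinates each increase by at least $k$ (since $x$ is their high digit), which dominates the change of at most $k-1$ in the low digit; if $x=x'$ and $y<y'$, the first coordinate increases because its high digit $k(x-1)$ is unchanged and $y$ grew, while the third coordinate jumps by at least $k$; and if $x=x'$, $y=y'$, $z<z'$, the second and third coordinates both increase because $z$ grew with the other digit fixed. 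In each case two coordinates strictly increase, so $a^i<_2 a^j$ for all $i<j$.

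The main (and really the only) obstacle is guessing the construction — the upper bound and the verification are both routine once the map above is written down. I would double-check the boundary behaviour of the linear forms (i.e.\ that a unit increase in a ``high digit'' really does dominate an arbitrary decrease in the corresponding ``low digit'', using that low digits range over $[k]$, so differences are at most $k-1<k$), since that inequality is exactly what makes the three cases go through.
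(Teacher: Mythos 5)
Your proof is correct and matches the paper's approach. The upper bound is the same pigeonhole/injectivity observation, and your map $(x,y,z)\mapsto\bigl(k(x-1)+y,\ k(x-1)+z,\ k(y-1)+z\bigr)$ is exactly the paper's construction (up to permuting the output coordinates; for $k=2$ it reproduces the displayed length-$8$ example term for term), and your design principle of assigning to each output coordinate a two-element subset of the index digits is precisely the paper's generalized description in Proposition~\ref{triv2}.
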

\begin{proof}
The upper bound follows from the trivial remark that in a set of more than $n^2$ triples with coordinates from $[n]$ we must have two triples that are equal in their first two coordinates, by the pigeon-hole principle. But neither of these is 2-less than the other.
	
For the lower bound, we generalize the construction used in the example above. Say $n=m^2$ is a perfect square. We let the sequence of first coordinates be $m$ consecutive copies of $1,\dots, m^2$. Then we let the sequence of second coordinates be $m$ consecutive copies of $1,\dots, m$, followed by $m$ copies of $m+1,\dots,2m$, etc, finishing with $m$ copies of $m^2-m+1,\dots, m^2$.  Finally, we let the sequence of third coordinates be $m$ consecutive 1s, followed by $m$ consecutive 2s, etc, finishing with $m$ consecutive $m^2$s. For example, with $n=9$ we have the construction
	\begin{center}
		1\,2\,3\,4\,5\,6\,7\,8\,9\,1\,2\,3\,4\,5\,6\,7\,8\,9\,1\,2\,3\,4\,5\,6\,7\,8\,9\\
		1\,2\,3\,1\,2\,3\,1\,2\,3\,4\,5\,6\,4\,5\,6\,4\,5\,6\,7\,8\,9\,7\,8\,9\,7\,8\,9\\
		1\,1\,1\,2\,2\,2\,3\,3\,3\,4\,4\,4\,5\,5\,5\,6\,6\,6\,7\,7\,7\,8\,8\,8\,9\,9\,9\\
	\end{center}
where to save space we have written the triples as columns rather than rows.
	
	It is easy to check that this gives a 2-increasing sequence, and it has length $m^3$, as required.
\end{proof}
Faced with the above bounds, it is natural to think that the lower bound is probably closer to the truth, since the remark giving the upper bound is very weak. However, the main result of Loh's paper may reduce one's confidence in this view. For use in the proof, and later in the paper, we make the following definition.

\begin{definition} Two triples $t_1$ and $t_2$ are \emph{2-comparable} if one of them is 2-less than the other. A set of triples is \emph{2-comparable} if any two of them are 2-comparable.
\end{definition}

\begin{proposition}\label{PS}
	$F(n)\le n^2/\exp(\Omega(\log^*(n)))$.
\end{proposition}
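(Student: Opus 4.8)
The plan is to follow Loh and encode a long 2-increasing sequence as a graph that contains many pairwise edge-disjoint triangles and \emph{no other triangles}, and then to invoke the triangle removal lemma in quantitative form. So let $a^1,\dots,a^m$ be a 2-increasing sequence of maximal length $m=F(n)$ with all coordinates in $[n]$, and form a tripartite graph $H$ with vertex classes $A,B,C$, each a disjoint copy of $[n]$, by inserting for every $i$ the triangle $T_i$ on the vertices $a^i_1\in A$, $a^i_2\in B$, $a^i_3\in C$ (that is, the edges $a^i_1a^i_2$, $a^i_2a^i_3$, $a^i_1a^i_3$). Since the sequence is 2-increasing, no two of its triples are equal and any two of them differ in at least two coordinates, so no two of the $T_i$ share two vertices; hence $T_1,\dots,T_m$ are pairwise distinct and pairwise edge-disjoint. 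In particular any triangle-free subgraph of $H$ omits an edge of each $T_i$, so $H$ cannot be made triangle-free by deleting fewer than $m$ edges.

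The heart of the argument is the claim that the only triangles of $H$ are $T_1,\dots,T_m$. Suppose $xyz$ is a triangle with $x\in A$, $y\in B$, $z\in C$. Every edge of $H$ was inserted on account of some triple, so there are indices $i,j,k$ with $(a^i_1,a^i_2)=(x,y)$, $(a^j_2,a^j_3)=(y,z)$ and $(a^k_1,a^k_3)=(x,z)$. If $i=j=k$ then $xyz=T_i$; otherwise I would derive a contradiction. If two of $i,j,k$ coincide, these three conditions force two distinct triples of the sequence to agree in two coordinates, which is impossible for a 2-increasing sequence, since neither of two triples agreeing in two coordinates is 2-less than the other. If $i,j,k$ are all distinct, I would look at the order in which they occur in the sequence and use repeatedly that if $p<q$ then $a^p<_2a^q$, so that whenever $a^p$ and $a^q$ agree in one coordinate they must be strictly increasing in the other two; chasing these inequalities around the triple $(a^i,a^j,a^k)$ produces a cyclic chain of strict inequalities of the form $x<\dots<x$ (or the analogue for a coordinate of $B$ or $C$). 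There are six orderings of $\{i,j,k\}$, but the configuration is invariant under the cyclic rotation of the three coordinates, so it suffices to check two representatives, say $i<j<k$ and $i<k<j$, each of which yields such a contradiction. It is exactly here that one uses the full strength of Definition~\ref{2inc} and not merely $a^i<_2a^{i+1}$, and this triangle-uniqueness claim is the only place the structure of the problem is used — I expect it to be the main obstacle. The upshot is that $H$ has exactly $m$ triangles, all pairwise edge-disjoint.

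Finally I would apply the quantitative triangle removal lemma in the sharp form due to Fox: there is an absolute constant $C$ such that every graph on $N$ vertices with fewer than $\delta N^3$ triangles can be made triangle-free by deleting fewer than $\epsilon N^2$ edges, where $\delta^{-1}$ is at most a tower of twos of height $C\log(1/\epsilon)+C$. Apply this to $H$ with $N=3n$ and $\epsilon=m/N^2$. Since $H$ cannot be made triangle-free by deleting fewer than $m=\epsilon N^2$ edges, it contains at least $\delta(\epsilon)N^3$ triangles; but it contains exactly $m$, so $\delta(m/N^2)\le m/N^3$. By Proposition~\ref{triv} we have $m\le n^2$, hence $m/N^3\le 1/(27n)$ and therefore $\delta(m/N^2)^{-1}\ge 27n$. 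Comparing with the tower bound gives $C\log(N^2/m)+C\ge \log^*(27n)-O(1)$, so $\log(N^2/m)=\Omega(\log^* n)$ and thus $m\le N^2\exp(-\Omega(\log^* n))=n^2/\exp(\Omega(\log^* n))$, as required.

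Everything in the last paragraph is a routine application of the removal lemma once the graph $H$ has been shown to be of Ruzsa--Szemerédi type, and this is also why the improvement over the trivial bound $n^2$ is only as large as the (very weak) known bounds for that lemma permit; using the cruder regularity-based bound in place of Fox's would give only $F(n)\le n^2/(\log^* n)^{\Omega(1)}$.
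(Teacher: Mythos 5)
Your proof is correct and follows the same route as the paper: encode the sequence as a Ruzsa--Szemer\'edi-type tripartite graph (pairwise edge-disjoint triangles, no other triangles) and apply Fox's quantitative triangle removal lemma. One small remark: the triangle-uniqueness step does not actually require you to track the ordering of $i,j,k$ or invoke the full 2-increasing property as you suggest --- it is enough that the three triples are pairwise 2-comparable (assume WLOG the direction of one pair of unmatched coordinates and chase the forced strict inequalities around the cycle to a contradiction), which is how the paper argues it; this observation is what lets the paper later extend the same bound to 2-comparable sets in Proposition~\ref{triv3}.
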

\begin{proof}
	Let $T=(t_i)$ be a 2-increasing sequence of triples taking values in $[n]$, and let $t_i=(a_i,b_i,c_i)$. Now construct a tripartite graph with vertex sets $A=B=C=[n]$ by taking each triple $t_i$ and thinking of it as a triangle with vertices $a_i\in A, b_i\in B$ and $c_i\in C$. That is, we put in the edges $a_ib_i, b_ic_i$ and $a_ic_i$. 
	
	Note that no two of these triangles can share an edge. For example, if the edges $a_ib_i$ and $a_jb_j$ are the same, then $a_i=a_j$ and $b_i=b_j$, which implies that neither of the triples $(a_i,b_i,c_i)$ and $(a_j,b_j,c_j)$ can be 2-less than the other. Furthermore, these are the only triangles in the graph, since if we have a triangle with all three of its edges coming from different triples, then we have three triples in our collection, of the form $(x,b,c), (a,y,c), (a,b,z)$, which must be 2-comparable. If $x<a$, then we can deduce from the 2-comparability that $b<y$, which in turn gives us that $c>z$, which then implies that $x>a$, a contradiction. Similarly, if $x>a$ we can deduce that $x<a$ and again obtain a contradiction.
	
	It follows that no two triangles in the graph we have just constructed share an edge. But by the triangle removal lemma \cite{RS}, any such graph has $o(n^2)$ edges, and using the best-known bounds, due to Fox \cite{Fox}, we obtain the result stated.
\end{proof}
After seeing this proof, one might now expect that the correct bound is of the form $n^{2-o(1)}$, with a lower bound provided by a suitable modification of Behrend's surprisingly dense set that contains no arithmetic progression of length 3 \cite{Behrend}. However, it does not take long to see that this does not work: in brief, the reason is that the 2-comparable and 2-increasing conditions impose far stronger constraints on the graph than the ones used in the above proof. (For more details, see Section 2.3 of Loh's paper.) 

We end the description of the problem with a simple product argument that shows that if for any fixed $k$ one could obtain any improvement at all over the lower bound of $k^{3/2}$, then we could deduce that asymptotically $F(n)$ beats $n^{3/2}$ in the exponent (meaning that there exists some $\alpha>3/2$ such that $F(n)>n^\alpha$ for all sufficiently large $n$).
\begin{lemma}\label{prod}
	Suppose that for some $n$ we have $F(n)=n^{\alpha}$. Then there are arbitrarily large $m$ such that $F(m)\ge m^\alpha$.
\end{lemma}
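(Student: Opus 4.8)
The plan is to prove the super-multiplicative bound $F(mn)\ge F(m)F(n)$ for all positive integers $m,n$, from which the lemma is immediate: if $F(n)=n^\alpha$, then applying the bound repeatedly gives $F(n^k)\ge F(n)^k=(n^k)^\alpha$ for every $k\ge 1$, and the numbers $m=n^k$ are arbitrarily large.

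To prove $F(mn)\ge F(m)F(n)$, I would take a 2-increasing sequence $a^1,\dots,a^P$ of triples with coordinates in $[m]$, where $P=F(m)$, together with a 2-increasing sequence $b^1,\dots,b^Q$ of triples with coordinates in $[n]$, where $Q=F(n)$. Index a new sequence by the pairs $(i,j)\in[P]\times[Q]$ in lexicographic order (so $(i,j)$ precedes $(i',j')$ iff $i<i'$, or $i=i'$ and $j<j'$), and let the triple at position $(i,j)$ have $r$-th coordinate $(a^i_r-1)n+b^j_r$. Since $a^i_r\in[m]$ and $b^j_r\in[n]$, this coordinate lies in $[mn]$, and the sequence has length $PQ$, so it suffices to check that it is 2-increasing.

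The verification splits into two cases. If $(i,j)$ precedes $(i',j')$ with $i<i'$, then $a^i<_2a^{i'}$, so there are at least two coordinates $r$ with $a^i_r<a^{i'}_r$; for each such $r$ we have $(a^i_r-1)n+b^j_r\le a^i_rn\le(a^{i'}_r-1)n<(a^{i'}_r-1)n+b^{j'}_r$, so the new $r$-th coordinate strictly increases no matter what the $b$-parts do. If instead $i=i'$ and $j<j'$, then $b^j<_2b^{j'}$, and for the (at least two) coordinates $r$ with $b^j_r<b^{j'}_r$ we get $(a^i_r-1)n+b^j_r<(a^i_r-1)n+b^{j'}_r$. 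In both cases at least two coordinates strictly increase, so the new sequence is indeed 2-increasing. Note this comparison is made for arbitrary pairs $(i,j)<(i',j')$, not just consecutive ones, using that the original sequences are 2-increasing, so the result is genuinely a 2-increasing sequence rather than merely a consecutively-2-less-than one.

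I do not expect a genuine obstacle here; the only point needing a little care is the first case, where one must confirm that the high-order part $(a^i_r-1)n$ dominates, so that an arbitrary and possibly adverse change in the low-order part $b^j_r$ cannot spoil the strict inequality --- this is exactly why the lexicographic order and the scaling by $n$ are the right choices. The same idea applied to $k$-tuples directly (via base-$n$ expansions of each coordinate) also yields $F(n^k)\ge F(n)^k$ in a single step, but factoring everything through the two-variable inequality $F(mn)\ge F(m)F(n)$ keeps the bookkeeping to a minimum.
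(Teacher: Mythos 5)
Your proof is essentially the paper's argument: the paper defines a product $\otimes$ of two sequences by pairing triples lexicographically and then injecting pairs $(x,y)\in[n]^2$ into $[n^2]$ via a lex-respecting injection, which is exactly what your explicit formula $(a^i_r-1)n+b^j_r$ does. You state the supermultiplicativity $F(mn)\ge F(m)F(n)$ a little more generally and carry out the case analysis in more detail, but the underlying idea and verification are the same.
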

\begin{proof}
	We define the product $\otimes$ of two sequences in an obvious way: given two 2-increasing sequences $(a_i,b_i,c_i)$ and $(d_j,e_j,f_j)$, form a sequence $((a_i,d_j),(b_i,e_j),(c_i,f_j))$, where the indices $(i,j)$ are arranged lexicographically. Also, take the lexicographical ordering on the pairs themselves. Then if $(i,j)<(k,l)$ we either have $i<k$, in which case
	\[((a_i,d_j),(b_i,e_j),(c_i,f_j))<((a_k,d_l),(b_k,e_l),(c_k,f_l))\]
	just because $(a_i,b_i,c_i)<(a_k,b_k,c_k)$, or we have $i=k$ and $j<l$, in which case we are done because of the second coordinates. Finally, we can just inject pairs $(x,y)$ with $x,y\in [n]$ into $[n^2]$ with an injection that respects the lex ordering. So if we have a sequence of tuples $T$ with $|T|=n^\alpha$ then by taking $T\otimes\dots \otimes T$ we can boost the construction to arbitrarily large $m$.
\end{proof}

Observe also that since for every $m$ there is an integer power of $n$ that lies between $m/n$ and $m$, we can also deduce from the assumption of the lemma that $F(m)\geq(m/n)^\alpha$ for every $m$. Therefore, for every $\beta<\alpha$ and all sufficiently large $m$, we have that $F(m)\geq m^\beta$.

In the light of this result, it is natural to try a computer search to see whether it throws up any small examples that give rise to an exponent greater than 3/2. We have tried this and failed to find any, which lends some support to the following conjecture, which is also suggested by remarks that Loh makes in his paper.\begin{conjecture}\label{IncUB}
	$F(n)\le n^{3/2}$ for all $n$.	
\end{conjecture}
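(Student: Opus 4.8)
The statement in question is Conjecture~\ref{IncUB}, an open conjecture, so what follows is a proof strategy rather than a plan for a known proof.

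The plan is to push the graph-theoretic encoding from Proposition~\ref{PS} much harder. Starting from a $2$-increasing sequence $T=(a_i,b_i,c_i)$ taking values in $[n]$, we again form the tripartite graph on $A=B=C=[n]$ whose triangles are exactly the triples. Proposition~\ref{PS} only used edge-disjointness of these triangles; the first step is to extract the stronger structure. For each pair $(a,b)$ that appears as the first two coordinates of some triple, there is a \emph{unique} such triple, so the map sending a used pair $(a,b)$ to its third coordinate $c(a,b)$ is a well-defined partial function $[n]\times[n]\to[n]$, and similarly for the other two coordinate projections. The key is that $2$-comparability forces a monotonicity-type constraint on these partial functions: roughly, if $(a,b)$ and $(a',b')$ are both used and $a\le a'$, $b\le b'$ with at least one inequality strict, then the triples are comparable via the first two coordinates, so $c(a,b)$ and $c(a',b')$ are unconstrained; but if $a<a'$ and $b>b'$ then comparability must come through the third coordinate, forcing $c(a,b)$ and $c(a',b')$ to be comparable in a way determined by which of the first-two inequalities ``wins.'' First I would set up this dictionary carefully and record exactly which sign patterns of $(a_i-a_j,\,b_i-b_j,\,c_i-c_j)$ are permitted for a $2$-increasing (not merely pairwise $2$-comparable) sequence.

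Next I would try to reduce to a cleaner extremal problem. Let $P\subseteq[n]^2$ be the set of used first-two-coordinate pairs; then $|T|=|P|$, so the goal is to show $|P|\le n^{3/2}$. The constraints from the previous paragraph say that on the ``antichain-like'' part of $P$ — pairs $(a,b),(a',b')$ with $a<a'$, $b>b'$ — the third-coordinate function $c$ must behave monotonically, and simultaneously the analogous functions $b(\cdot,\cdot)$ on the $(a,c)$-projection and $a(\cdot,\cdot)$ on the $(b,c)$-projection must behave monotonically. The heart of the argument should be a counting/entropy or double-counting step: partition $P$ by the values of $c$, so $P=\bigsqcup_v P_v$ where $P_v=\{(a,b)\in P : c(a,b)=v\}$; within a single $P_v$ the third coordinate is constant, so every two pairs in $P_v$ must be comparable \emph{through the first two coordinates}, i.e.\ $P_v$ is a chain in the product order on $[n]^2$ whenever the pairs are comparable at all — and I would aim to show that $2$-increasingness forces each $P_v$ to be genuinely a chain, hence $|P_v|\le n$. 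If additionally one can show that at most $\sqrt n$ values $v$ have $P_v\ne\emptyset$ — which ought to follow from applying the same chain argument to the $(a,c)$ and $(b,c)$ projections, so that the set of used $c$-values is itself constrained — then $|P|\le\sum_v|P_v|\le\sqrt n\cdot n^{... }$, and balancing the two estimates gives $n^{3/2}$. The $n=4$ example of length $8=4^{3/2}$, with its block structure (four ``blocks'' of two, using the third coordinate as a block index into a chain), strongly suggests this is the right decomposition.

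The main obstacle is the step asserting that each $P_v$ has size at most $\sqrt n$, or dually that the number of nonempty $P_v$ is at most $\sqrt n$: the naive chain bound only gives $|P_v|\le n$ and $n$ nonempty blocks, which reproduces the trivial $n^2$. The genuine constraint is a simultaneous one linking all three projections, and capturing it seems to require more than looking at one projection at a time — this is presumably why the conjecture is open. One natural line of attack is to consider the three partial functions together as defining a single partial function $[n]\to[n]\times[n]$ along the ``spine'' of the sequence and to look for a Dilworth-type decomposition of $P$ into chains and antichains where antichains are controlled by one projection and chains by another; another is to feed the extra structure into the regularity/removal machinery of Proposition~\ref{PS} to get a bound of the form $n^{2-c}$ first, and only then try to optimize $c$ up to $1/2$. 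I would also sanity-check any proposed lemma against the product construction of Lemma~\ref{prod}, since any correct bound must be compatible with taking tensor powers of the extremal $n^{3/2}$ example.
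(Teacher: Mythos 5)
Conjecture~\ref{IncUB} is indeed open, and the paper does not prove it; your submission offers a strategy rather than a proof, which you say clearly. Let me evaluate the strategy.

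Your first step is correct and is essentially the paper's grid reformulation: partitioning the used pairs $P\subseteq[n]^2$ by the third-coordinate value $v$, each block $P_v$ is a chain in the product order on $[n]^2$, because two triples sharing their third coordinate must be 2-comparable via their first two coordinates. So $|P_v|\le n$ and there are at most $n$ nonempty blocks. But the crucial next step, where you hope to get $\sqrt n$ on one side or the other, does not hold in the form you state it. In the extremal example from Proposition~\ref{triv} with $n=m^2$, every one of the $n$ possible labels $v$ is used, with $|P_v|=\sqrt n$ for each; so the claim that at most $\sqrt n$ values $v$ are nonempty is simply false. Dually, the single-chain sequence $(1,1,1),(2,2,1),\dots,(n,n,1)$ is a valid 2-increasing sequence with $|P_1|=n$, so the claim $|P_v|\le\sqrt n$ also fails in isolation. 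Neither ``dual'' bound holds, and therefore no argument of the form ``bound the number of blocks times the size of a block'' can work: what you need is a genuine trade-off, a single inequality of the form $\sum_v|P_v|\le n^{3/2}$ arising from how the chains interact, which is a joint constraint across all three projections. This is exactly the step that remains unidentified.

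The deeper issue is that your proposal does not isolate what 2-increasingness buys over 2-comparability. That extra strength is precisely the acyclicity of $<_2$ on the sequence (equivalently, transitivity, since the set is 2-comparable), and this is what drives the paper's partial progress: Theorem~\ref{PB} is proved by passing to a quotient grid, observing that acyclicity is preserved under quotienting while 2-comparability is not, and then using Lemma~\ref{key} to bound acyclic sets by $O(k^2)$ before inducting. Your proposal never uses acyclicity, so as written it would apply equally to 2-comparable sets — but Theorem~\ref{UB} shows that $G(n)$ grows faster than $n^{3/2}$, so any purported proof of $F(n)\le n^{3/2}$ that does not crucially use acyclicity must be wrong. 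You should also be aware that Section~\ref{conc} of the paper examines exactly the kind of decomposition-plus-Cauchy--Schwarz strategy you gesture at in your final paragraph, and gives an explicit 15-triple counterexample in $[6]\times[7]\times[8]$ showing that the natural decomposition does not always exist; any successful version of that strategy must use extremality of the sequence in an essential way (Lemma~\ref{extremal} is the paper's only tool for that), and your proposal does not yet do so.
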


\subsection{Weakening the main condition to 2-comparability}
The proof of Loh's upper bound, Proposition \ref{PS}, did not make full use of the property that the sequence of triples is 2-increasing: all that was needed was that it was 2-comparable (recall that this means that for any two triples in the sequence, one is 2-less than the other). It is therefore natural to consider the following weakening of Problem~\ref{PS1}.

\begin{problem}\label{PS2}
	For each $n$, let $G(n)$ be the maximal size of a 2-comparable set of triples with each coordinate belonging to $[n]$. How does $G(n)$ grow with $n$?
\end{problem}

While this problem is no longer equivalent to the Ramsey question that motivated Loh, it too turns out to be surprisingly interesting. We shall discuss it further later in the paper, and provide some connections from this question to other problems in extremal combinatorics.

From the remarks we have just made, and the fact that $G(n)\geq F(n)$ for every $n$, we have the following result.
\begin{proposition}\label{triv3}
	For all $n$ we have $G(n)\le n^2/\exp(\Omega(\log^*(n)))$. Moreover, whenever $n=m^2$ we have $G(n)\ge n^{3/2}$.
\end{proposition}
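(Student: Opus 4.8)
The plan is to observe that both halves of the statement are essentially corollaries of results already in hand, so the ``proof'' is really a matter of checking that nothing extra is needed.

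For the upper bound, I would revisit the proof of Proposition~\ref{PS} and verify that the only property of the sequence $(t_i)$ that was actually used there is 2-comparability. Recall that that argument builds a tripartite graph on vertex sets $A=B=C=[n]$ by turning each triple $(a_i,b_i,c_i)$ into the triangle $a_ib_i,\,b_ic_i,\,a_ic_i$, and then needs exactly two facts: that no two of these triangles share an edge, and that the graph contains no \emph{other} triangles. Rereading that argument, one sees that both facts were deduced purely from pairwise 2-comparability of the triples (the ``no shared edge'' claim from comparing two triples, and the ``no spurious triangle'' claim from comparing the three triples of the form $(x,b,c),(a,y,c),(a,b,z)$); the consistent \emph{direction} of the comparisons, i.e. the full 2-increasing hypothesis and hence transitivity, was never invoked. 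So the same appeal to the triangle removal lemma~\cite{RS} with Fox's bounds~\cite{Fox} applies verbatim to any 2-comparable set of triples, yielding $G(n)\le n^2/\exp(\Omega(\log^*(n)))$.

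For the lower bound, I would simply note that the sequence constructed in the proof of Proposition~\ref{triv} consists of pairwise distinct triples that are pairwise 2-comparable, so its underlying set is a 2-comparable set of the same size; more generally any 2-increasing sequence is, after forgetting the order, a 2-comparable set with no repeated elements, giving $G(n)\ge F(n)$ for all $n$. Combined with the bound $F(n)\ge n^{3/2}$ from Proposition~\ref{triv} for $n=m^2$, this finishes the proof.

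There is no genuine obstacle here; the single point deserving a moment's care is the verification, mentioned above, that the proof of Proposition~\ref{PS} never secretly uses the ordering of the sequence or transitivity of $<_2$, only pairwise 2-comparability — which a line-by-line rereading of that argument confirms.
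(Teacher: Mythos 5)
Your proof is correct and follows essentially the same route as the paper: the authors likewise note immediately before the proposition that the proof of Proposition~\ref{PS} only ever invokes pairwise 2-comparability (not the full 2-increasing hypothesis), and that the lower bound follows from $G(n)\geq F(n)$ combined with Proposition~\ref{triv}. Your explicit line-by-line check of the triangle-removal argument is a helpful elaboration but does not change the substance.
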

\noindent Also, essentially the same product argument shows that Lemma \ref{prod} is true for $G$ just as it is for $F$.

Table \ref{tab1} gives the values of $F$ and $G$ for very small $n$, calculated by a brute-force computer search. (The number of 2-comparable sequences grows very rapidly with $n$, so such a search was not feasible for larger $n$ on the computer we used.) So at least for these $n$ weakening the assumption to 2-comparability does not lead to significant improvements over the construction outlined in Proposition \ref{triv}.

\begin{table}[]
	\centering
	\begin{tabular}{|c|ccccc|}
		\hline
		$n$                       & 1 & 2 & 3 & 4 & 5     \\
		$\lfloor n^{3/2} \rfloor$ & 1 & 2 & 5 & 8 & 11   \\
		$F(n)$                    & 1 & 2 & 4 & 8 & 10   \\
		$G(n)$                    & 1 & 2 & 5 & 8 & 11   \\
		\hline
	\end{tabular}
		\caption{Experimental results for small $n$}
		\label{tab1}
\end{table}

\subsection{Generalizing to $s$-increasing sequences of $r$-tuples}

It is natural to consider what happens if we generalize the problem in an obvious way from 2-increasing or 2-comparable sequences of triples to $s$-increasing or $s$-comparable sequences of $r$-tuples. So let us make the following definitions.

\begin{definition}
An $r$-tuple $a=(a_1,\dots,a_r)$ of integers is \emph{$s$-less than} an $r$-tuple $b=(b_1,\dots,b_r)$ if $a_i<b_i$ for at least $s$ values of $i$. In that case we write $a<_sb$. An $s$-\emph{increasing} sequence of $r$-tuples is a sequence $(a^1,\dots,a^m)$ such that $a^i<_sa^j$ whenever $i<j$. Two $r$-tuples are $s$-\emph{comparable} if one is $s$-less than the other, and an $s$-\emph{comparable} set of $r$-tuples is a set $\{a^1,\dots,a^m\}$ such that any two distinct elements of the set are $s$-comparable.
\end{definition}

It will be convenient to refer to an $s$-increasing sequence of $r$-tuples as an $(r,s)$-sequence and an $s$-comparable sequence of $r$-tuples as an $[r,s]$-sequence. 

Let $F_{r,s}(n)$ be the greatest possible length of an $(r,s)$-sequence and let $G_{r,s}(n)$ be the greatest possible length of an $[r,s]$-sequence such that the $r$-tuples take values in $[n]$. The following proposition generalizes Proposition \ref{triv}.
\begin{proposition}\label{triv2}
	For all $r,s$ and $n$ we have $F_{r,s}(n)\le G_{r,s}(n)\le n^{r-s+1}$. Moreover, whenever $n$ is a perfect $s$th power, we have $G_{r,s}(n)\ge F_{r,s}(n)\ge n^{r/s}$.
\end{proposition}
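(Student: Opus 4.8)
The plan is to generalize both halves of the proof of Proposition \ref{triv} in the natural way. For the upper bound, I would argue by the pigeonhole principle as before: if we have a set of $r$-tuples with coordinates in $[n]$ and more than $n^{r-s+1}$ of them, then by pigeonhole two of the tuples must agree in some fixed set of $r-s+1$ coordinates (there are only $n^{r-s+1}$ possible values on those coordinates). But if two $r$-tuples $a$ and $b$ agree on $r-s+1$ coordinates, then they can disagree on at most $s-1$ coordinates, so neither can have strictly smaller entries than the other in $s$ or more places; that is, neither is $s$-less than the other, contradicting $s$-comparability. This gives $G_{r,s}(n)\le n^{r-s+1}$, and $F_{r,s}(n)\le G_{r,s}(n)$ is immediate since every $(r,s)$-sequence is in particular an $[r,s]$-sequence.

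For the lower bound, I would build a direct generalization of the iterated "odometer'' construction in Proposition \ref{triv}. Write $n=m^s$. The idea is to think of the index set as $[m]^s$ with the lexicographic order, which has size $m^s = n$ — wait, we want length $n^{r/s} = m^r$, so the index set should be $[m]^r$ in lexicographic order. For an index $(j_1,\dots,j_r)\in[m]^r$, I would let the $k$-th coordinate of the corresponding $r$-tuple be a number in $[m^s]=[n]$ that encodes the block of $s$ consecutive "digits'' $(j_k, j_{k+1}, \dots, j_{k+s-1})$ — read cyclically, i.e. indices taken mod $r$ — interpreted as an $s$-digit base-$m$ integer (say $j_k m^{s-1} + j_{k+1}m^{s-2} + \dots + j_{k+s-1}$, suitably shifted to land in $[n]$). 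So each coordinate is a "sliding window'' of length $s$ over the cyclic sequence of digits. One then checks that if $(i_1,\dots,i_r) <_{\mathrm{lex}} (j_1,\dots,j_r)$, with the first place of disagreement at position $p$ (so $i_p < j_p$ and $i_t = j_t$ for $t<p$), then for each of the $s$ coordinates $k \in \{p-s+1, p-s+2, \dots, p\}$ (again cyclically), the window starting at $k$ has its first differing digit exactly at position $p$, where the second tuple is larger; hence the $k$-th coordinate strictly increases for all $s$ of these values of $k$. That is exactly $s$ coordinates increasing, so the tuple strictly $s$-increases, and this holds for every pair in lex order, giving an $(r,s)$-sequence of length $m^r = n^{r/s}$. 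Finally $G_{r,s}(n)\ge F_{r,s}(n)$ is trivial.

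The main point requiring care is the verification that the sliding-window construction really produces an $(r,s)$-sequence and not merely a sequence in which consecutive terms are $s$-comparable: one has to confirm that the windows wrap around correctly so that a single digit change at position $p$ propagates to exactly $s$ coordinates, independent of where $p$ sits — in particular the cyclic indexing is what makes the count come out to exactly $s$ for every $p$ rather than fewer near the ends. (For $r=3$, $s=2$ one should recover, up to relabelling of coordinates, the construction in Proposition \ref{triv}.) Checking that each coordinate value genuinely lies in $[n]$ is then a routine base-$m$ bookkeeping matter, and I would not belabour it.
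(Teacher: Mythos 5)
Your upper bound is fine and is essentially the paper's argument: fix in advance a set of $r-s+1$ coordinates (say the first $r-s+1$), apply pigeonhole, and observe that two tuples agreeing there disagree in at most $s-1$ places and hence are not $s$-comparable; and $F_{r,s}\le G_{r,s}$ is immediate.

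Your lower bound, however, contains a genuine error. The overall idea --- index by base-$m$ digit strings of length $r$, and let the $k$-th coordinate of the image be determined by the cyclic window $A_k=\{k,k+1,\dots,k+s-1\}\pmod r$ of digits --- is the same as the paper's. But the paper reads each window's $s$ digits in their \emph{global order of significance} (i.e.\ in the order the positions appear in the base-$m$ expansion), whereas you read them in \emph{cyclic order starting from} $k$, with $j_k$ declared most significant. These two readings disagree exactly when a window wraps around the end of $[r]$, and in that case your verification breaks: for $k\in\{p-s+1+r,\dots,r\}$ the window begins at positions $>p$, which may disagree between $i$ and $j$ in either direction, so it is simply not true that ``the window starting at $k$ has its first differing digit exactly at position $p$.''

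This is not merely a gap in the write-up --- your construction really does fail to be $s$-increasing. Take $r=3$, $s=2$, $m=2$, and compare $i=(1,1,2)$ with $j=(2,1,1)$, so $i<_{\mathrm{lex}}j$ with first disagreement at $p=1$. Under your reading, the three coordinates of $i$ are the pairs $(1,1),(1,2),(2,1)$ and those of $j$ are $(2,1),(1,1),(1,2)$. The first coordinate increases, but the second drops from $(1,2)$ to $(1,1)$ and the third (the wrapping window $\{3,1\}$ read as $(j_3,j_1)$) drops from $(2,1)$ to $(1,2)$. So $j$'s image is $2$-less than $i$'s image, the wrong way round. Worse, the images of $(1,1,2)$, $(1,2,1)$, $(2,1,1)$ form a directed $3$-cycle under $<_2$, so no reordering can salvage an $(r,s)$-sequence from this set; you recover a $2$-comparable set but not the claimed $2$-increasing one.

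The fix is precisely the paper's convention: for a window $A$, read the digits at positions in $A$ in the order in which they occur in the base-$m$ representation (most significant position first), not cyclically from $k$. Then if $p$ is the most significant position where $i$ and $j$ disagree (and there $i$'s digit is smaller, since $i<j$), every position in $A$ more significant than $p$ agrees, so the window value strictly increases for every $A$ containing $p$; and there are exactly $s$ such cyclic windows. (For $r=3$, $s=2$ this reads the wrapping window $\{3,1\}$ as $(j_1,j_3)$ rather than your $(j_3,j_1)$, which is what makes the check go through.)
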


\begin{proof}
	As with Proposition \ref{triv}, the upper bound follows instantly from the pigeonhole principle. Also, it is trivial that $F_{r,s}(n)\leq G_{r,s}(n)$ for every $r,s$ and $n$.
		
	The lower bound is obtained by generalizing the construction in Proposition \ref{triv} in a straightforward, but not quite trivial, way. We can describe it succinctly as follows. Just for this proof, we will use the notation $[q]$ to stand for the set $\{0,1,\dots,q-1\}$ instead of the set $\{1,2,\dots,q\}$. 
	
	Let $n=m^s$. Then write the integers in $[m^r]$ in base $m$. Given any subset $A$ of $[r]$ of size $s$, and any integer $k\in [m^r]$, let $f_A(k)\in[m^s]$ be the number you get by restricting the base-$m$ representation of $k$ to the digits indexed by $A$. Now let $A_i=\{i, i+1,...,i+s-1\}$ mod $r$ for each $i\in[r]$, and define a sequence $T_0,T_1\dots,T_{m^r-1}$ of $r$-tuples by setting $T_k$ to be $(f_{A_1}(k),...,f_{A_r}(k))$ for each $k\in[m^r]$. 
	
	If $i<j$ then $f_{A_t}(i)<f_{A_t}(j)$ for any set $A_t$ that contains the highest coordinate that is less in the base $m$ representation of $i$ than in the base $m$ representation of $j$. There are $s$ such sets $A_t$, and so this sequence of $r$-tuples is $s$-increasing.
\end{proof}

Note that it was not important in the above construction that the sets $A_i$ were intervals mod $r$: all we needed was a collection of $r$ subsets of $[r]$, each of size $s$, such that every element of $[r]$ belonged to precisely $s$ of the sets. 

The result of Loh can also be easily generalized to improve the upper bound above by an $\exp(\Omega(\log^*n))$ factor.

It is now tempting to conjecture that the lower bound is sharp not just for 2-increasing sequences of triples, but more generally for $s$-increasing sequences of $r$-tuples. However, this turns out to be false. One way of seeing this is simply to note that the following example (discovered by a computer search, though it could probably have been found by hand) shows that $F_{4,2}(3)\ge 10>3^2$.

\begin{center}
(1,1,1,1)\\
(1,1,2,2)\\
(1,2,1,3)\\
(2,1,3,1)\\
(2,2,2,2)\\
(3,3,1,1)\\
(1,3,2,3)\\
(3,1,3,2)\\
(2,2,3,3)\\
(3,3,3,3)\\
\end{center}

But there is also a more conceptual argument, which makes it completely obvious that $n^{r/s}$ is not the right bound for all pairs $(r,s)$. If we fix $n$ to be 2, say, then for two random $r$-tuples $a$ and $b$, the expected number of coordinates for which $a_i<b_i$ is $r/4$, so by standard arguments the probability that $a$ is not $r/8$-less than $b$ is exponentially small in $r$. It follows easily that $F_{r,r/8}(2)$ is exponentially large in $r$, whereas if the $n^{r/s}$ bound were sharp, then $F_{r,r/8}(2)$ would be at most $2^8$.

These counterexamples weaken the case for believing that $F_{3,2}(n)\leq n^{3/2}$, and they suggest that giving an exact formula for $F_{r,s}(n)$ is unlikely to be possible for all triples $(r,s,n)$. They also tell us that any proof that $F_{3,2}(n)\leq n^{3/2}$ will have to have some aspect that cannot be generalized to all pairs $(r,s)$ -- indeed, not even to the pair $(4,2)$. 

\subsection{Our main results}

Our main result is the following theorem, which is presented in the next section. It provides a non-trivial power-type improvement to the upper bound for Problem \ref{PS1}.

\begin{theorem}\label{PBv2}
	There exists $\epsilon>0$ such that every 2-increasing sequence of triples taking values in $[n]$ has size at most $n^{2-\epsilon}$.
\end{theorem}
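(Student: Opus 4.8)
The plan is to exploit the basic fact that in a $2$-increasing sequence two triples that agree in one coordinate must be \emph{strictly co-monotone} in the other two (if they agreed in a further coordinate, or were oppositely ordered there, they would not be $2$-comparable). So let $T$ be a $2$-increasing sequence of length $m$ with coordinates in $[n]$; for a coordinate $k\in\{1,2,3\}$ and a value $v$ write $S^{(k)}_v$ for the set of triples whose $k$th coordinate is $v$, and $N^{(k)}_v=|S^{(k)}_v|$. By the remark just made, each $S^{(k)}_v$, projected onto the other two coordinates, is a chain in the coordinatewise order, and along this chain the sequence order agrees with the coordinatewise order.

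The heart of the argument is a disjointness lemma. Fix $t=(a,b,c)\in T$. Let $U_1\subseteq S^{(1)}_a$ be the triples strictly above $t$ in the chain $S^{(1)}_a$ (equivalently, those occurring after $t$ in the sequence); each has the form $(a,b',c')$ with $b'>b$, $c'>c$. Let $U_3\subseteq S^{(3)}_c$ be the triples strictly above $t$ in the chain $S^{(3)}_c$; each has the form $(a',b',c)$ with $a'>a$, $b'>b$. Then $U_1$ and $U_3$ use disjoint sets of second coordinates: a common value $\beta$ would produce triples $(a,\beta,c')\in U_1$ and $(a',\beta,c)\in U_3$ agreeing in coordinate $2$, with the first having a strictly smaller first coordinate and a strictly larger third coordinate, so that neither is $2$-less than the other --- contradicting $2$-increasingness. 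All these second coordinates lie in $\{b+1,\dots,n\}$, so $|U_1|+|U_3|\le n-b$; the symmetric statement for the triples strictly below $t$ gives a bound of $b-1$. Since $N^{(1)}_a$ equals the number of triples of $S^{(1)}_a$ above $t$, plus the number below, plus one, and likewise for $N^{(3)}_c$, adding the two bounds gives $N^{(1)}_a+N^{(3)}_c\le n+1$; by symmetry $N^{(1)}_a+N^{(2)}_b\le n+1$ and $N^{(2)}_b+N^{(3)}_c\le n+1$ as well.

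Summing $N^{(1)}_{a_i}+N^{(2)}_{b_i}+N^{(3)}_{c_i}\le\tfrac32(n+1)$ over the $m$ triples and using $\sum_i N^{(1)}_{a_i}=\sum_v (N^{(1)}_v)^2\ge m^2/n$ by Cauchy--Schwarz (and the analogues for the other two coordinates), one gets $3m^2/n\le\tfrac32(n+1)m$, hence $m\le\tfrac12 n(n+1)$: a constant-factor improvement over the trivial bound, valid for every $n$. (Note also that if every $N^{(1)}_v\le n^{1-\epsilon}$ then $m=\sum_v N^{(1)}_v\le n^{2-\epsilon}$ outright, so one may assume that many coordinate classes are large.)

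The remaining step --- amplifying the constant saving into a genuine power saving --- is where I expect the real difficulty to lie. The plan is to iterate the disjointness lemma: applying it to the triple immediately above $t$ inside $S^{(1)}_a$ constrains a further $S^{(3)}$-class, and chaining this along the whole of $S^{(1)}_a$ forces the coordinate-$3$ classes it meets to have sizes and value-ranges that are ``stacked'' inside $[n]$. Feeding such stacking constraints recursively into a Dilworth/Erd\H{o}s--Szekeres analysis of how the large classes can interleave should show that having too many large classes is self-contradictory, yielding $m\le n^{2-\epsilon}$ for an explicit $\epsilon>0$. The obstacle is that the extremal construction of Proposition~\ref{triv} has every class of size exactly $\sqrt n$, which makes every inequality above extremely slack; any recursion that merely loses a constant per level therefore washes out after $O(1)$ levels, and one must instead track precisely how the ``budget'' $[n]$ in each coordinate is divided among competing chains. (An alternative would be to bootstrap $F(n)\le\tfrac12 n(n+1)$ through a product argument in the spirit of Lemma~\ref{prod}, but that would need a sub-multiplicativity property of $F$ which does not seem to be available.)
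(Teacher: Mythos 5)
Your ``disjointness lemma'' is correct and quite clean: given $t=(a,b,c)$, the second coordinates appearing among the triples of $S^{(1)}_a$ above $t$ and the triples of $S^{(3)}_c$ above $t$ must indeed be distinct, since a repeat would produce two triples agreeing in coordinate $2$ and oppositely ordered in coordinates $1$ and $3$. The resulting inequalities $N^{(1)}_a+N^{(3)}_c\le n+1$ (and symmetric variants) and the Cauchy--Schwarz step all check out, and the bound $m\le\tfrac12 n(n+1)$ follows. Note, though, that this argument uses only $2$-comparability, so it gives nothing beyond what is already known for $[3,2]$-sets (compare Lemma~\ref{easybound}): you have not used the acyclicity that distinguishes the $2$-increasing case.

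The gap is exactly where you flag it: the theorem asks for a power saving, and your argument delivers only a constant factor. Your diagnosis of why the natural iterations fail is accurate --- in the extremal construction every class $S^{(k)}_v$ has size $\sqrt n$, so all your inequalities hold with enormous slack, and chaining the lemma along a single class loses at most a constant per step, which washes out. The missing ingredient is not a cleverer iteration of the same inequality, but a way to \emph{localize}: to compare the sequence against a coarse quotient and carry the saving through an induction on scales. That is what the paper does. It first generalizes to grids $[r]\times[s]\times[t]$ (so that the induction can descend to unbalanced boxes), and then exploits acyclicity --- which your proof never touches --- in a crucial way: the quotient map to a $k\times k\times k$ grid destroys $2$-comparability but preserves acyclicity, and acyclic subsets of $[k]^3$ have only $O(k^2)$ elements (Lemma~\ref{key}). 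The quotient set is then split into ``collapsible'' collections (for which the inductive hypothesis can be applied efficiently, Lemma~\ref{collapsible}) plus a genuinely $2$-increasing remainder of size controlled by the base case, and the base case is supplied not by a constant-factor saving but by Loh's $n^2/\exp(\Omega(\log^* n))$ bound. So the amplification you are looking for is not a Dilworth-type interleaving argument but a quotient/scaling induction whose engine is acyclicity. Your alternative suggestion --- sub-multiplicativity of $F$ --- is also correctly ruled out: as you say, it does not seem to be available, and the paper's product argument (Lemma~\ref{prod}) runs only in the direction of lower bounds.
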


This is the first improvement over Loh's $n^2/\exp(\Omega(\log^*(n)))$ bound. Our proof makes essential use of the assumption that the sequence in question is 2-increasing and not just 2-comparable, so it does not yield an improvement for Problem \ref{PS2}. Also, the explicit $\epsilon$ we obtain is very small indeed, though as we shall explain later, if we had unlimited computer power then it could probably be improved substantially, though not to the point where it matches the lower bound.

Our second main result concerns the problem for 2-comparable sets of triples. We have not been able to improve on Loh's upper bound in this case, but, rather to our surprise, we found an example that beats the $n^{3/2}$ lower bound, which yields the following result.

\begin{theorem}\label{UB}
	For arbitrarily large $n$ there exist 2-comparable sets of triples of size at least $n^{1.546}.$
\end{theorem}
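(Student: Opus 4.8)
The plan is to obtain the bound for one carefully chosen value of $n$ and then amplify it by a product argument. First I would record that $G=G_{3,2}$ is supermultiplicative: $G(n_1n_2)\ge G(n_1)G(n_2)$. This is the construction of Lemma~\ref{prod}: given $2$-comparable sets $S\subseteq[n_1]^3$ and $T\subseteq[n_2]^3$, take all triples $\big((a_i,d_j),(b_i,e_j),(c_i,f_j)\big)$ with each coordinate pair ordered lexicographically and then injected order-preservingly into $[n_1n_2]$; two such triples either already differ in their $S$-part, in which case two of the three coordinates compare in the right direction because $S$ is $2$-comparable, or they agree in the $S$-part and differ in the $T$-part, in which case the same happens via $T$. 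Hence it suffices to find one $n_0$ and a $2$-comparable set of triples in $[n_0]^3$ of size $m_0$ with $m_0\ge n_0^{1.546}$: then $G(n_0^{t})\ge m_0^{t}\ge (n_0^{t})^{1.546}$ for all $t$, and the $n_0^t$ are arbitrarily large.

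So the real work is to build one such configuration, which must genuinely beat $n_0^{3/2}$. It is worth observing first that the obvious ``digit'' generalisations of the construction in Proposition~\ref{triv2} cannot do this: if $[n_0]=[m]^d$ and each of the three coordinates is built from a set of at most $d$ of the $D$ digit positions of the index, then $2$-comparability forces each digit position to lie in at least two of the three sets, so $2D\le 3d$ and the size is at most $m^{3d/2}=n_0^{3/2}$. A genuinely new idea is therefore needed. I would search for it using the following clean reformulation, which is also what I would hand to a computer: a set $S$ of triples is $2$-comparable if and only if, after fixing any one of the three coordinates, what remains is the graph of a strictly increasing partial matching between the other two coordinates -- equivalently, no two elements of $S$ agree in two coordinates, and any two elements that agree in exactly one coordinate have the other two coordinates of the larger one both larger. (Pairs of triples differing in all three coordinates are automatically $2$-comparable, since among three strict inequalities two must point the same way.) Within this framework I would look for \emph{blocked} configurations: partition $[n_0]$ into blocks, choose a small ``coarse'' pattern at the level of blocks, place a standard construction inside each block, and arrange the coarse pattern so that the fine pieces glue together consistently; one then optimises the block sizes, with computer assistance for the small coarse gadget, until the exponent reaches roughly $1.546$.

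The verification step is then routine given the reformulation: one only has to examine the pairs of triples that agree in exactly one coordinate and confirm that the other two coordinates are monotone there, everything else being free. The hard part is the construction itself. The requirement that every axis-parallel fibre be a strictly increasing partial matching, simultaneously in all three directions, is very rigid -- each new triple one tries to insert collides in a single coordinate with many triples already present, and each such collision forbids one of the two diagonal directions -- which is precisely why the standard construction only barely reaches $n^{3/2}$ and why extracting even a power-type improvement is delicate. This is where the cleverness, and in practice a computer search over modest configurations, is needed.
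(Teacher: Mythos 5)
Your framework is fine, but the core content of the theorem is missing. You reduce (via supermultiplicativity of $G$, which is a correct adaptation of Lemma~\ref{prod}) to exhibiting a single $n_0$ and a $2$-comparable subset of $[n_0]^3$ of size at least $n_0^{1.546}$, and you give a sound argument that ``digit-style'' constructions as in Proposition~\ref{triv2} can never beat $n^{3/2}$. But you then describe a search strategy and explicitly stop short of producing the configuration, saying the cleverness and computer search are needed. Since the \emph{only} nontrivial content of the statement is that such a configuration exists, this is a genuine gap, not a detail to be filled in.

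Two further points. First, it is not clear that a discrete example achieving exponent $\ge 1.546$ at a single modest $n_0$ exists at all; the paper's own concrete small example is a $28$-element set in $[9]^3$, which only reaches $9^{1.516}$, and the exponent $1.546$ is obtained in the limit. The paper gets around this by passing to the continuous version (Lemma~\ref{equiv}): it starts from the $5$-element $2$-comparable set $\{(1,1,1),(1,2,3),(2,3,1),(3,1,2),(3,3,3)\}$ in $[3]^3$, replaces the three equal thirds of $[0,1]$ by intervals of lengths $x$, $1-2x$, $x$, computes $\|B\|_\alpha = 2x^{3\alpha}+3x^{2\alpha}(1-2x)^\alpha$, optimizes over $x$ to find $\alpha > 0.5154$ with $\sup_x \|B\|_\alpha \ge 1$, and then invokes Lemma~\ref{equiv} to get discrete sets of size $n^{3\alpha - \epsilon} \ge n^{1.546}$ for arbitrarily large $n$. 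This continuous step (or something doing the same job) is what lets one squeeze out the last fraction of the exponent; your plan of ``find one $(n_0,m_0)$ and amplify'' needs either that continuous machinery or an explicit discrete example with exponent at least $1.546$, and you supply neither. Second, your ``blocked'' idea is in fact morally the same as the paper's construction --- a small coarse gadget with blocks of unequal sizes, filled recursively --- so you are pointing in the right direction, but without the actual gadget and the optimization that shows it works, the proof does not go through.
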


We shall describe the construction that proves this theorem in Section \ref{[32]}, before moving on to discuss a few interesting variants of the problem and connections to widely studied Tur\'an-type problems.

These two results suggest that the problems for 2-increasing sequences and 2-comparable sets of triples are fundamentally different, despite what the bounds for small examples suggest, though of course they do not actually prove that the exponents for the functions $F_{3,2}$ and $G_{3,2}$ are distinct.

In the final section we shall discuss the generalized problem for $[r,s]$-sequences. Our focus will switch from fixing $r$ and $s$ to fixing $n$ and the ratio $r/s$. This problem has some similarities with well-known results about unit vectors with upper bounds on their inner products, where the form of the bound depends strongly on whether the upper bound is positive, negative, or zero. We prove the following theorem, which shows a similar change in behaviour, for similar reasons, though our proofs are somewhat different, and the differences appear to be necessary.
\begin{theorem}
	Let $n\in\mathbb N$ and $\beta\in (0,1)$ be fixed. Then
	\begin{enumerate}[(i)]
		\item if $\beta<(1-1/n)/2$, then $G_{r, \beta r}(n)$ grows exponentially in $r$,
		\item if $\beta=(1-1/n)/2$, then  $G_{r, \beta r}(n)$ grows at least linearly in $r$, and
		\item if $\beta>(1-1/n)/2$, then  $G_{r, \beta r}(n)$ is bounded independently of $r$.
	\end{enumerate}
\end{theorem}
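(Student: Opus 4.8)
The plan is to prove the three parts by three unrelated arguments, mirroring the familiar trichotomy for unit vectors whose pairwise inner products are bounded above by a quantity that is positive, zero, or negative. The numerology is explained by noting that $(1-1/n)/2$ is exactly the probability that $a_i<b_i$ when $a,b$ are chosen uniformly and independently from $[n]^r$: for $\beta<(1-1/n)/2$ a typical pair is already $\beta r$-comparable, for $\beta>(1-1/n)/2$ one demands more comparability than random tuples provide, and $\beta=(1-1/n)/2$ is critical. Only the lower bounds in (i) and (ii) need proof (the exponential and linear matching upper bounds being, respectively, immediate from the $n^{r-\beta r+1}$ bound of Proposition~\ref{triv2} and not claimed), while (iii) is entirely about the $O(1)$ upper bound.

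For (i) I would use a deletion argument: pick $N$ tuples from $[n]^r$ independently and uniformly at random. For a fixed pair $a,b$, each of $d^+(a,b)=\#\{i:a_i<b_i\}$ and $d^-(a,b)=\#\{i:a_i>b_i\}$ is a sum of $r$ i.i.d.\ Bernoulli variables of mean $(1-1/n)/2>\beta$, so by a Chernoff bound the probability that the pair fails to be $\beta r$-comparable, i.e.\ that $\max(d^+(a,b),d^-(a,b))<\beta r$, is at most $e^{-cr}$ for some $c=c(n,\beta)>0$. Taking $N=\lfloor e^{cr}/2\rfloor$, the expected number of non-comparable pairs is below $N/2$, and deleting one tuple from each bad pair leaves an $[r,\beta r]$-sequence of size $\Omega(e^{cr})$. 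For (ii) I would give an explicit construction of $\Theta(r)$ tuples that are pairwise perfectly balanced, meaning $d^+(a^j,a^k)=d^-(a^j,a^k)=(1-1/n)r/2$ for every pair (so each pair is critically, hence exactly, $(1-1/n)r/2$-comparable); Hadamard-type or orthogonal-array/difference-matrix constructions, suitably adapted to $n$ values and a convenient length $r$, are the natural source.

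For (iii), set $\delta=\beta-(1-1/n)/2>0$ and let $\{a^1,\dots,a^m\}$ be $\beta r$-comparable. The natural embedding is $a\mapsto F_a=(f_{a_1},\dots,f_{a_r})\in\mathbb R^{rn}$ with $f_v=e_v-\tfrac1n\mathbf 1_n$, giving $\|F_a\|^2=r(1-1/n)$ and $\langle F_a,F_b\rangle=d^0(a,b)-r/n$ where $d^0(a,b)=\#\{i:a_i=b_i\}$; using only the crude consequence $d^0\le r(1-\beta)$ of comparability yields normalized inner products at most $(1-\beta-1/n)/(1-1/n)$, hence $m=O_{n,\beta}(1)$, but only when $\beta>1-1/n$. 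To reach the true threshold one has to use $\max(d^+,d^-)\ge\beta r$ (strictly stronger than $d^0\le r(1-\beta)$ when $\beta<1/2$), via a dichotomy. Averaging the identity $\sum_{\{j,k\}}(d^++d^-)=\sum_i\tfrac12(m^2-\sum_v g_{i,v}^2)\le\tfrac12 rm^2(1-1/n)$, where $g_{i,v}=\#\{j:a^j_i=v\}$, against $\sum_{\{j,k\}}\max(d^+,d^-)\ge\binom m2\beta r$ and using $\max=\tfrac12((d^++d^-)+|d^+-d^-|)$ forces $\sum_{\{j,k\}}|d^+(a^j,a^k)-d^-(a^j,a^k)|=\Omega_\delta(r m^2)$ once $m$ is large: a large comparable set must contain many strongly \emph{imbalanced} pairs. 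For a pair with $d^+(a^j,a^k)$ far larger than $d^-(a^j,a^k)$, $a^j$ lies coordinatewise below $a^k$ except on a small fraction of coordinates; I would orient such pairs, extract a large subfamily on which this orientation is nearly transitive, and invoke the chain bound: a coordinatewise-increasing $\beta r$-comparable sequence in $[n]^r$ has length at most $1+(n-1)/\beta$, since each step must raise at least $\beta r$ coordinates while only $r(n-1)$ total increase is available. Calibrating the imbalance threshold and optimizing should interpolate the bound from $1-1/n$ down to $(1-1/n)/2$.

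The main obstacle is the range $\beta\in((1-1/n)/2,1/2)$ of (iii), where "imbalanced" is weaker than "one-directional": a pair may have $d^-(a^j,a^k)$ equal to a constant fraction of $r$ and still be imbalanced, so the orientation on imbalanced pairs need not be transitive, and accumulated "backwards" coordinates along a long chain can grow, so the chain bound does not apply verbatim. Put differently, pairs whose value-profiles $(g_{i,\cdot})$, or whose effective sizes, differ substantially must be handled separately. Making the extraction-of-a-subfamily step and the choice of imbalance threshold cooperate, so that \emph{every} large comparable set is forced into one of the two controllable regimes — a near-balanced one, killed by the quasi-orthogonality of the $F_a$, or a near-chain one, killed by the increment argument — is where the real work lies, and is presumably also why the authors remark that their proof, while analogous to the unit-vector argument, is "somewhat different".
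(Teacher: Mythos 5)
Your treatments of (i) and (ii) are essentially what the paper does. For (i) the paper gives the same deletion argument: sample tuples uniformly, use a Chernoff bound to show a random pair fails to be $\beta r$-comparable with exponentially small probability, and delete. For (ii) the paper's construction is the set of affine maps $\mathbb{F}_q^k\to\mathbb{F}_q$, which is exactly the orthogonal-array-type construction you allude to; two distinct affine maps agree in at most $q^{k-1}$ coordinates, hence are $(q^k-q^{k-1})/2$-comparable, giving $qr$ tuples. (The paper only establishes this lower bound for $n=q$ a prime power and $r=q^k$; you would face the same restriction.)

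For (iii) there is a genuine gap, and you have (honestly) pointed at it yourself. Your vector embedding $a\mapsto F_a$ with $\langle F_a,F_b\rangle=d^0(a,b)-r/n$ controls only the \emph{agreement} count $d^0$, and on its own gives boundedness only for $\beta>1-1/n$. To push down to the true threshold $(1-1/n)/2$ you propose an averaging argument producing many imbalanced pairs, orienting them, and applying a chain bound — but, as you say, when $\beta<1/2$ an imbalanced pair can still have $d^-$ a constant fraction of $r$, so the orientation is not even approximately transitive and the per-step loss accumulates along a chain; the chain bound does not apply. That is not a detail to "calibrate": it is the reason the entire strategy breaks. The paper in fact discusses this precise point. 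It remarks that the unit-vector trichotomy can be proved from the \emph{average} inner product alone, whereas for $[r,s]$-sequences the average comparability condition is not sufficient for boundedness (the example of $m$ copies each of $(1,\dots,1)$, $(2,\dots,2)$, $(3,\dots,3)$ has average pairwise disagreement $2r/3>r(1-1/n)/2$ but is unbounded), so any argument that only exploits second-moment/average information — which is what linear-algebraic embeddings and your $\binom{m}{2}\beta r$ averaging step do — cannot close the gap. The paper instead proceeds by induction on $n$: it first passes to a subset in which every symbol appears $\Omega(r/n^2)$ times in every tuple (otherwise the inductive hypothesis for $n-1$ applies after relabelling), then applies a dependent random choice theorem iteratively to the $n$ bipartite "occurrence graphs" $G_i$ to extract a large subfamily that is simultaneously quasirandom for all symbols, and finally uses a box-norm lemma to conclude that for almost all pairs $(x,x')$ the cross-densities $\delta_{i,j}(x,x')$ are close to $\mathbb{E}_{x''}\delta_i(x'')\delta_j(x'')$; summing over $i<j$ and using $\sum_i\delta_i(x)=1$ together with $\sum_{i<j}\delta_i\delta_j\le (1-1/n)/2$ forces $\beta\le(1-1/n)/2+o(1)$, a contradiction. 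The quasirandomness step is precisely what upgrades an average statement to a "for almost every pair" statement, and it is the ingredient your proposal lacks.
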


The significance of the number $(1-1/n)/2$ is that if $a$ and $b$ are random $r$-tuples taking values in $n$, then the expected proportion of coordinates $i$ for which $a_i<b_i$ is $r(1-1/n)/2$. (This quickly implies (i), as we have already observed in the case $n=2$ and $\beta=1/8$.)

\section{An upper bound for $(3,2)$-sequences} \label{2increasingcase}

In this section we shall prove our upper bound for $F(n)$. It may be of interest that this approach was only discovered after a significant amount of time considering a different, but more ``obvious" approach. The idea was to decompose sequences into smaller subsequences and use a combination of induction and Cauchy-Schwarz to prove the conjectured bound. Despite this method initially seeming promising, we did not manage to make it work. In Section~\ref{conc} we shall give a brief discussion of the obstacles that we discovered along the way. 

For the purposes of obtaining a convenient inductive hypothesis later, it will be useful to generalize Problem \ref{PS1} so that instead of taking the triples from $[n]^{3}$, we shall take them from a grid $[r]\times[s]\times [t]$, where the sides may have unequal lengths. The maximal length of a 2-increasing sequence now depends on the three parameters $r,s$ and $t$, and the trivial upper bound is $\min\{rs,rt,st\}$. Note that if we could ever find an example of a 2-increasing sequence of length greater than $(rst)^{1/2}$, then taking the product (in the sense described earlier) of this example and two further copies with the roles of the coordinates cycled round would give a 2-increasing sequence of length greater than $(rst)^{3/2}$ taking values in $[rst]$.

We now state our main result in a slightly generalized form.

\begin{theorem}\label{PB}
	There exists $\theta<2/3$ such that any 2-increasing sequence of triples from $[r]\times[s]\times[t]$ has size at most $(rst)^{\theta}$.
\end{theorem}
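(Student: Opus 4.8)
The plan is to prove Theorem~\ref{PB} by a "dichotomy plus induction" argument on the volume $N=rst$, showing that from any long $2$-increasing sequence in $[r]\times[s]\times[t]$ one can extract a structured sub-configuration to which the inductive hypothesis applies with a genuine saving in the exponent. Suppose, for contradiction, that $T$ is a $2$-increasing sequence in $[r]\times[s]\times[t]$ with $|T|>N^{\theta}$, where $N=rst$ and $\theta$ is a constant just below $2/3$ to be chosen at the end. Write $t_i=(a_i,b_i,c_i)$. The crucial observation is the one already used in the proof of Proposition~\ref{PS}, pushed further: if we group the triples by their third coordinate into classes $C_v=\{i:c_i=v\}$, then \emph{within} a class the third coordinates are all equal, so the $2$-increasing condition forces the first \emph{two} coordinates to be strictly increasing; hence each $C_v$ is a chain in the coordinatewise order on $[r]\times[s]$, of length at most $\min\{r,s\}$. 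Between two different classes $C_v$ and $C_w$ with $v<w$, any $i\in C_v$ preceding any $j\in C_w$ in the sequence must have $a_i<a_j$ or $b_i<b_j$ — and, reading the Condorcet-type argument of Proposition~\ref{PS} carefully, the set of triples actually has a rigid "staircase" structure that I want to exploit quantitatively.

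The main step is a weight-shifting / Cauchy--Schwarz dichotomy. Either the classes $C_v$ are "spread out" — many of them are nonempty and roughly balanced in size — or a few of them carry most of the sequence. In the balanced case, one projects onto the first two coordinates: because each $C_v$ is an increasing chain and the chains coming from different $v$ interleave in a controlled way (two chains with $v<w$ cannot "cross" badly, again by the non-transitivity obstruction spelled out in Proposition~\ref{PS}), one gets many edge-disjoint triangles in an auxiliary tripartite graph and can iterate Loh's argument inside sub-grids, gaining a constant factor in the exponent each time a scale drops by a constant factor. In the unbalanced case, one passes to a single class or a bounded union of classes, which lives essentially in a $2$-dimensional grid of volume $O(\min\{r,s\}^2)$, and there the trivial bound $\min\{rs,rt,st\}$ for the two-dimensional problem is already far below $(rst)^{2/3}$; feeding this back, together with Lemma~\ref{prod} to rule out self-improving configurations, closes the induction. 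One sets up the induction so that the constant loss in each step is summable, which is exactly what forces $\theta$ to be only slightly below $2/3$ rather than near $1/2$.

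I expect the main obstacle to be making the dichotomy genuinely quantitative: turning the soft "no two triangles share an edge, and there are no other triangles" statement of Proposition~\ref{PS} into a \emph{bound with a power saving} rather than the $\log^*$-type saving that the triangle removal lemma gives on its own. The removal lemma alone will not beat $n^2$ by a power, so the real content must be that the $2$-\emph{increasing} (not merely $2$-comparable) hypothesis gives the triangles a one-dimensional, almost-monotone organization — each $C_v$ being a chain — which should let one replace the removal lemma by a direct combinatorial count. Concretely, the hard estimate is to show that if all the chains $C_v$ are long (length $\gg\sqrt{N}$) and numerous, their mutual non-crossing constraints already bound $\sum_v |C_v|$ by $N^{1/2+o(1)}$, contradicting $|T|>N^\theta$. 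Handling the intermediate regime — moderately many chains of moderate length — will require choosing the split threshold carefully and balancing the two resulting bounds, and getting the bookkeeping of the recursion right (so that the exponent stabilizes strictly below $2/3$) is where most of the technical work, and the smallness of the final $\epsilon=2/3-\theta$, will come from.
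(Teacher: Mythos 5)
Your proposal is not the argument the paper gives, and it has two gaps that I think are fatal rather than mere bookkeeping.

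\textbf{You are not actually using the $2$-increasing hypothesis.} You observe that within a fixed third coordinate $v$, the class $C_v$ is a chain in the coordinatewise order on $[r]\times[s]$. That is true, but it is already a consequence of $2$-\emph{comparability}: two triples with equal third coordinate are $2$-comparable if and only if the remaining two coordinates are ordered the same way. Likewise, the ``no two triangles share an edge, no extra triangles'' structure that you cite from Proposition~\ref{PS} is proved there using only $2$-comparability. So everything concrete in your outline would, if it worked, prove the theorem for $2$-comparable sets as well. But the paper explicitly notes that its argument makes essential use of being $2$-increasing and does \emph{not} give a power improvement for the $2$-comparable problem (Problem~\ref{PS2}); indeed no such bound is known, and Section~\ref{[32]} gives a $2$-comparable construction of size $n^{1.546}$, which suggests the two problems are genuinely different. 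You gesture at ``the non-transitivity obstruction'' but never formulate a lemma that distinguishes $2$-increasing from $2$-comparable. The property that the paper isolates and exploits is acyclicity: a $2$-comparable set is $2$-increasing iff the restriction of $<_2$ has no directed cycles, and an \emph{arbitrary} (not necessarily $2$-comparable) acyclic subset of $[n]^3$ has size at most $6n^2$ (Lemma~\ref{key}). Nothing of that kind appears in your plan.

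\textbf{The dichotomy is not an argument.} Your central estimate --- that if the chains $C_v$ are ``long and numerous'' their non-crossing constraints force $\sum_v|C_v|\le N^{1/2+o(1)}$ --- is exactly the theorem in disguise, and you acknowledge you do not know how to prove it. The ``balanced'' branch, where you propose to ``iterate Loh's argument inside sub-grids, gaining a constant factor in the exponent,'' does not work as stated: the triangle removal lemma gives only a $\log^*$-type saving each time it is applied, and iterating it does not magically convert that into a power saving. Your ``unbalanced'' branch is also unclear: each $C_v$ has length at most $\min\{r,s\}$, so a bounded number of classes cannot carry most of a long sequence, and if ``bounded'' means something depending on the parameters you have not said what is gained. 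Finally, Lemma~\ref{prod} is a \emph{lower-bound} device for boosting constructions; it has no role in ruling anything out or closing an induction for an upper bound.

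For comparison, the paper's proof goes roughly as follows. First replace $2$-increasing by $2$-comparable plus acyclic. Divide each dimension of $[r]\times[s]\times[t]$ into $k$ nearly-equal intervals and form the quotient set $T'\subset[k]^3$ of occupied boxes. The crucial point is that the quotient operation preserves acyclicity (a directed cycle of boxes lifts to a directed cycle of triples), even though it destroys $2$-comparability. Then: (a) Lemma~\ref{key} bounds $|T'|$ by $6k^2$; (b) the quotient set is greedily partitioned into ``collapsible collections'' (antichains within a fixed coordinate plane) plus a leftover $S$ that contains no large collapsible collection; (c) by a Dilworth/Mirsky argument (Lemma~\ref{split}), $S$ contains a large $2$-comparable and hence $2$-increasing subsequence, so $|S|$ is controlled by a base case (Loh's bound); (d) Lemma~\ref{collapsible} uses the inductive hypothesis to bound the number of original triples inside each collapsible collection with a genuine saving, and H\"older's inequality combines these bounds. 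The exponent $\theta$ emerges from balancing these pieces. This is a quotient-and-decompose argument, not a project-onto-two-coordinates dichotomy; and the acyclicity of the quotient is exactly the place where being $2$-increasing rather than merely $2$-comparable is used.
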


Note that if $r=s=t=n$, then the bound we obtain is $n^{3\theta}$. Thus, any improvement on $2/3$ for the exponent $\theta$ translates directly into an improvement on the exponent 2 for the problem as it was stated before. Unfortunately the improvement over $\frac23$ that we obtain is tiny. The main reason for this is that we need as a base case for an inductive argument an $n$ for which the trivial bound is beaten by a reasonable-sized constant. Finding such an $n$ by brute force is not computationally feasible, so we are forced instead to use Loh's upper bound (Theorem \ref{PS}). But then the $n$ in question is huge, so the exponent in the base case is only very slightly less than 2. So in a certain sense, the weakness in our argument is not a fundamental one.

However, as it stands, our argument still could not give a bound particularly close to the conjectured $(rst)^{1/2}$ even if we could use an arbitrarily large amount of computational power for the base case. The reasons for this will become clearer later, and we shall discuss this point further at the end of the section.

\subsection{Proof of Theorem \ref{PB}}
The proof will be by induction. Since the argument cannot hope to produce anything other than a $\theta$ very close to $\frac23$, we shall not put much effort into optimizing the details and shall aim instead for simplicity and clarity.

\subsubsection{Acyclic sets of triples}

Let us call a set $T$ of triples \emph{acyclic} if the restriction of the relation $<_2$ to $T$ contains no directed cycles. Note that a 2-comparable set $T$ of triples is in fact a 2-increasing sequence of triples if and only if it is acyclic, which is a useful observation because it allows us to study the problem for 2-increasing sequences as a problem about \emph{sets} of triples that avoid certain configurations. 

In the proof that follows, we shall use the acyclic property in a central way. In fact, we shall begin by considering the acyclic property alone -- that is, \emph{without} insisting on 2-comparability -- and obtaining the following upper bound for the size of an acyclic set of triples $T\subset[n]^2$. This demonstrates that the acyclic property is a strong condition to impose. 

\begin{lemma}\label{key}
	An acyclic subset of $[n]^3$ has size at most $6n^2$.
\end{lemma}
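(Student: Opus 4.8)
The plan is to analyze what the acyclic condition forbids and then exploit the resulting local structure. The relation $<_2$ on triples $a=(a_1,a_2,a_3)$ records, for each pair of triples, which coordinates go up and which go down. A directed $3$-cycle $a \to b \to c \to a$ corresponds exactly to a ``Condorcet'' configuration, so \emph{acyclic} means no such rock-paper-scissors triple occurs. First I would try to characterize acyclicity tournament-theoretically: restricted to a $2$-comparable set the relation is a tournament, and a tournament is acyclic iff it has no $3$-cycle iff it is a transitive order; but here we are \emph{not} assuming $2$-comparability, so $<_2$ is only a partial relation (some pairs are $2$-incomparable because they agree in one coordinate and split the other two, or tie). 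The key point to extract is a purely combinatorial constraint on the \emph{set} of triples that is implied by having no directed cycle at all (not just no $3$-cycle), and the claim $|T|\le 6n^2$ suggests that the right way to see this is to find, for each triple, a bounded number of ``coordinates'' or ``slopes'' that must be distinct across $T$, or to partition $T$ into $O(1)$ pieces each of which is forced to be a graph of a monotone-type function over an $n\times n$ grid.

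Concretely, here is the approach I would pursch. Project $T$ onto the first two coordinates, giving a multiset of points in $[n]^2$. If two triples $(a_1,a_2,c)$ and $(a_1,a_2,c')$ share their first two coordinates, they are $2$-incomparable, which is allowed; so the projection need not be injective, and we must bound the fibers. The natural move is to bound the number of triples with a fixed first two coordinates by $O(1)$: suppose $(a_1,a_2,c_1),\dots,(a_1,a_2,c_k)$ all lie in $T$ with $c_1<\dots<c_k$; I would try to show $k$ is bounded by exhibiting a cycle if $k$ is too large, perhaps by combining such a ``vertical'' fiber with triples elsewhere in the grid whose first two coordinates straddle $(a_1,a_2)$. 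If instead fibers can be large, then the bound $6n^2$ cannot come from the projection alone, and one must argue differently --- e.g. show that the whole of $T$, after deleting $O(n^2)$ exceptional triples, injects into $[n]^2$ via \emph{some} pair of coordinates. I would split into cases according to how many coordinate-pairs of a given triple are ``repeated'' within $T$, and use the acyclicity to rule out a triple having two distinct other triples agreeing with it in two different coordinate-pairs, since that tends to force a cycle. The constant $6$ strongly hints at a sum over the $\binom{3}{2}=3$ choices of coordinate pair, doubled, so I expect the final argument to say: each of the three projections $T\to[n]^2$ is ``almost injective'' in a suitable sense, losing a factor of at most $2$.

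The main obstacle, I expect, is precisely that acyclicity is a \emph{global} condition (no directed cycle of any length) while any clean bound wants a \emph{local} one. Merely forbidding $3$-cycles --- the Condorcet configuration --- is not obviously enough, since one could imagine long cycles $a^1 \to a^2 \to \dots \to a^\ell \to a^1$ with no short chord. So the real content is a lemma of the form: \emph{if $T\subset[n]^3$ is $2$-incomparable-or-not but contains a directed cycle under $<_2$, then it contains a directed $3$-cycle}, or at least a cycle of bounded length, after which one reduces to classifying the forbidden short configurations. Proving that ``any cycle contains a short cycle'' is where I anticipate spending most of the effort; a candidate strategy is to take a shortest directed cycle and show, using the fact that along the cycle each coordinate changes sign at most... — actually a cleaner idea: assign to the cycle the total number of ``descents'' in each of the three coordinate-sequences around the loop, note each such sequence returns to its start so has as many descents as ascents, do a counting/parity argument to locate a chord that shortcuts the cycle, contradicting minimality. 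Once the cycle is reduced to length $3$ and the Condorcet configurations are enumerated, the counting to reach $6n^2$ should be routine: fix two of the three coordinates, observe that the third is constrained to take $O(1)$ values (this is where the ``no $3$-cycle with two straddling triples'' observation is used), and sum over the three choices of which coordinate is free.
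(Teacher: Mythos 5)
Your instinct that $3$-cycles (Condorcet configurations) are the relevant forbidden object is right, but the logical worry you build around it is a red herring. Acyclicity means no directed cycle of any length, so it \emph{trivially} implies the absence of $3$-cycles; you do not need a lemma of the form ``a long cycle contains a short cycle.'' The paper explicitly observes that forbidding directed $3$-cycles alone already suffices to reach $6n^2$, so the ``global vs.\ local'' tension you anticipate spending most of your effort on does not arise. Your proposal would have you proving something unnecessary and, separately, would leave the actual $6n^2$ bound unproved.

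The arithmetic of the constant also leads you astray: $6 = 2 + 4$, not $2\times\binom{3}{2}$. The argument does not sum over the three projections $T\to[n]^2$. And your proposed final count --- ``fix two of the three coordinates, observe that the third is constrained to take $O(1)$ values'' --- fails outright: the set $\{(1,1,z) : z\in[n]\}$ is acyclic (its elements are pairwise $2$-incomparable, since they agree in two coordinates), so a fiber over a fixed $(x,y)$ can have size $n$. The fiber bound that actually works is the dual one: fix the \emph{third} coordinate $z$ and bound the size of the level set $A_z = \{(x,y) : (x,y,z)\in S\}$ by $O(n)$, then sum over $z\in[n]$.

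Two ideas you are missing are the heart of the proof. First, a preliminary deletion step: for each $(x,y)\in[n]^2$, discard the triples $(x,y,z)\in T$ with $z$ maximal and with $z$ minimal in the fiber over $(x,y)$, losing at most $2n^2$ triples. This guarantees that any surviving triple $(x,y,z)$ is ``sandwiched'': $T$ also contains $(x,y,w)$ with $w>z$ and $(x,y,w')$ with $w'<z$. Second, a purely planar lemma (the paper's Lemma~\ref{preK}): if $A\subset[n]^2$ contains no triple of points $a,b,c$ with $a<_2b$ (componentwise strictly less) and $c$ not weakly $2$-comparable to either, then $|A|\le 4n-5$. Applying this to a level set $A_z$ of size $\geq 4n$ produces such $a,b,c$; but $a,b,c$ all share third coordinate $z$, so by themselves they give no cycle. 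This is where the sandwiching is essential: replacing $(a,z)$ by its higher sandwich partner $(a,w)$ and $(b,z)$ by its lower one $(b,v)$ yields the genuine $3$-cycle $(a,w)<_2(b,v)<_2(c,z)<_2(a,w)$, contradicting acyclicity. Without the deletion step, the planar configuration cannot be converted into a directed cycle, and without the planar lemma there is no bound on level sets. Your plan, as written, contains neither ingredient.
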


In order to achieve the $6n^2$ upper bound it in fact suffices only to ban directed 3-cycles. With this in mind, we shall deduce Lemma \ref{key} from the following slightly more technical statement. Define two elements $a,b$ of $[n]^2$ to be \emph{weakly 2-comparable} if $a_1\leq b_1$ and $a_2\leq b_2$ or if $a_1\geq b_1$ and $a_2\geq b_2$.

\begin{lemma}\label{preK}
	Let $A$ be a subset of $[n]^2$ that contains no three pairs $x$, $y$ and $z$ with $x<_2 y$ and $z$ not weakly 2-comparable to either $x$ or $y$. Then $|A|\le 4n-5$.
\end{lemma}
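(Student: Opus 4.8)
The plan is to first translate the hypothesis on $A$ into a forbidden‑pattern statement, in the spirit of the case analysis used in the proof of Proposition~\ref{PS}. A triple $x,y,z$ realising the forbidden configuration has $x_1<y_1$, $x_2<y_2$, and $z$ strictly ``crossing'' both $x$ and $y$. There are $2\times 2$ sign patterns for how $z$ can cross $x$ and how it can cross $y$; two of the four are immediately contradictory, and the remaining two force, respectively, $x_1<y_1<z_1$ with $z_2<x_2<y_2$, or $z_1<x_1<y_1$ with $x_2<y_2<z_2$. Reading the three points in increasing order of first coordinate, the first case says the three second coordinates occur in the order given by the permutation $231$, the second in the order given by $312$. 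Conversely any such pattern obviously yields a forbidden configuration. Hence the hypothesis is exactly that $A$, regarded as a set of lattice points (equivalently a $0/1$ matrix with rows indexed by first coordinate), contains neither of the $3\times 3$ permutation patterns $231$ and $312$ among triples of points with pairwise distinct first and pairwise distinct second coordinates.

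For such forbidden‑pattern problems a linear bound follows from general theory (the Marcus--Tardos theorem), but since we want the explicit constant $4n-5$ we argue directly. For each value $i$ that occurs as a first coordinate of some point of $A$, let $m(i)$ and $M(i)$ be the smallest and largest second coordinates occurring with first coordinate $i$, so that $|A|\le\sum_i(M(i)-m(i)+1)$. Applying $231$‑avoidance to the \emph{lowest} points of three rows shows that the sequence of values $m(i)$ has no $231$ pattern; applying $312$‑avoidance to the \emph{highest} points shows the sequence $M(i)$ has no $312$ pattern; and applying the forbidden patterns to a highest point of one row together with the lowest points of two later rows (and the mirror‑image variant) gives cross inequalities of the form: if $i<i'<i''$ and $M(i')>m(i)$, then $m(i'')\ge m(i)$, and dually an earlier row's top cannot overshoot a later row's top once that later row rises above a middle row's floor. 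Informally, once the rows begin to rise they cannot dip back below where they started, so the nonempty rows split into an initial block and a final block on each of which the envelopes $m(\cdot),M(\cdot)$ are monotone; consequently $A$ is covered by a bounded number of monotone staircases, and summing their lengths already yields a bound of the form $cn-O(1)$.

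Squeezing this down to the stated $4n-5$ requires care about \emph{thick} rows (rows meeting $A$ in many points). The key point is that if several rows reach a common extreme column, the cross inequalities force their lowest points to repeat, or to nest into a single narrow interval, so that a thick row automatically makes everything to one side of it thin. This suggests organising the whole argument as an induction (on a rectangular grid $[p]\times[q]$, reducing $p$ or $q$ by one by deleting an extreme row or column) that loses only boundedly many points at each step: if only a few rows meet the deleted line the step is immediate, and if many rows meet it the nesting above shows the rest of $A$ compensates. One also takes $n\ge 3$, since the bound $4n-5$ genuinely fails for $n\le 2$ (where $A$ may be all of $[n]^2$), with $n=3$ checked directly as the base case.

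The step I expect to be the real obstacle is exactly this last one: pinning down the staircase/nesting structure when $A$ has several thick rows, and thereby getting the constant $4n-5$ rather than the weaker $4n-O(1)$ that comes out of just combining the two almost‑monotone envelopes. The reformulation, the monotonicity of the envelopes away from the thick rows, and the final summation should all be routine once that structural picture is in hand.
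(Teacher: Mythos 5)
Your translation of the hypothesis into the two forbidden $3\times 3$ permutation patterns $231$ and $312$ is correct and is a clean reformulation that the paper does not make explicit, so it is a genuinely different starting point. However, the rest of the argument is a sketch with a real gap, which you flag but understate.

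The trouble is that bounding $|A|$ by $\sum_i\bigl(M(i)-m(i)+1\bigr)$ is essentially content-free: for $A=\{(i,1),(i,n):i\in[n]\}$, which avoids both patterns (any triple with distinct first coordinates uses at most two distinct second coordinates), this sum equals $n^2$ while $|A|=2n$. And the claim that monotonicity of the envelopes $m(\cdot)$ and $M(\cdot)$ on a few blocks implies that $A$ is covered by a bounded number of monotone staircases does not follow: the envelopes say nothing about points of $A$ lying strictly between them, which is exactly the ``thick row'' case you single out. You are right that this is the obstacle, but it is not merely a matter of tightening $4n-O(1)$ to $4n-5$; without controlling the interiors of rows this bookkeeping does not give a linear bound at all.

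The paper avoids all of this by working in a different frame. It uses the skew order $a\le b\iff a_1\ge b_1,\ a_2\le b_2$, takes the skew-minimal points $x_1,\dots,x_m$ of $A$, and shows that the forbidden configuration forces (a) every point of $A$ to lie weakly above-left of some $x_i$, (b) the points of $A$ strictly above-left of any one $x_i$ to form a skew-chain, and (c) no point of $A$ to lie strictly above-left of two distinct $x_i$'s. This decomposes $A$ into a monotone boundary staircase of at most $2n-1$ points ($m$ of them being the $x_i$ themselves) together with a family of subgrids sharing no row or column, each carrying a skew-chain; summing gives $4n-3-m$, hence $4n-5$ once $m\ge 2$, with the $m=1$ case closed by a short direct extremal check. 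The decisive advantage over the envelope approach is that this decomposition accounts for every point of $A$ at once, and the chain property inside each region is an immediate consequence of the forbidden configuration, so there is no separate thick-row analysis left to do.
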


\begin{proof}
	Define a ``skew" ordering on $A$ by saying that $a\leq b$ if $a_1\geq b_1$ and $a_2\leq b_2$. Let $x_1,\dots,x_m$ be the minimal elements of $A$ in this order. Considering $A$ as a collection of points in the plane, these are the elements that have nothing below and to the right of them.
	
	For each minimal element $x_i$, let $X_i$ be the set of all points that are greater than $x_i$ in the second coordinate and smaller than $x_i$ in the first coordinate -- these are the points strictly above and strictly to the left of $x_i$. Also, let $\overline{X_i}$ be the set of points that are at least as big as $x_i$ in the second coordinate and at most as big in the first. Define the \emph{boundary} of $\bigcup_iX_i$ to be the set $\bigcup_i\overline{X_i}\setminus\bigcup_iX_i$. Then the following three conditions must be satisfied.
\begin{enumerate}
\item Every point in $A$ belongs to the set $\bigcup_i\overline{X_i}$.

\item If $y,z\in A\cap X_i$, then $y$ is not 2-less than $z$. In other words, $A\cap X_i$ is totally ordered by the skew ordering.
	
\item If $i\ne j$, then $A\cap X_i\cap X_j=\emptyset$.
\end{enumerate}
	
\noindent The first fact follows from the fact that $x_1,\dots,x_m$ are all the minimal elements. The second follows because if it were false then the points $x_i, y$ and $z$ would form a forbidden configuration, and the third fact follows because if $y\in A\cap X_i\cap X_j$, then the points $y$, $x_i$ and $x_j$ would form a forbidden configuration.
	
	It follows that the only points in $A$ belong either to the boundary of $\bigcup X_i$, which is a collection of points along a path that moves always either upwards or to the right, or to one of $m$ sets $B_i=X_i\setminus\bigcup X_j$.
	
	There are at most $2n-1$ points in any increasing path, and in this case $m$ of the elements are the $x_i$ themselves. As for the sets $B_i$, they are subgrids, and no two of them share a row or column. Moreover, for each $B_i$ we have that $A\cap B_i$ is totally ordered in the skew ordering. This last condition implies that if $B_i$ is a $u_i\times v_i$ subset of $[n]^2$, then $A\cap B_i$ has cardinality at most $u_i+v_i-1$. (To see this, observe that if we arrange the elements in an increasing sequence in the skew ordering, then as you move along the sequence, the first coordinate never increases, the second never decreases, and at least one of them always changes.) Moreover, the subgrids $B_i$ cannot intersect the boundary of $\cup X_i$ and so the sum of the dimensions is $\sum_i u_i+v_i\le 2n-2$.
	
	From this it follows that $\bigcup (A\cap B_i)$ has cardinality at most $2n-2-m$. This gives a bound of $2n-2-m+2n-1=4n-3-m$. If $m\ge 2$ we are done, and if $m=1$ then the above argument gives a bound of $4n-4$.
	
	Suppose our collection $A$ has size $4n-4$. Then it must be that there is a single skew-minimal element $x_1$ which must lie in the bottom right corner at the point $(n,1)$, since otherwise we obtain a saving in the length of the boundary path. Following the above argument, we see that the whole of the bottom row and rightmost column must be contained in our set (this is the boundary path). This tells us that the whole square $[3,n]\times[1,n-2]$ must be empty, else we combine with the points $(n-1,1)$ and $(n,2)$ to form a banned configuration. To obtain the required $4n-4$ points it follows that $A$ contains the bottom two rows and the rightmost two columns. But then the collection $\{(n-2,2),(n-1,3),(n,1)\}$ is contained in $A$ and is a banned configuration.
	
	Therefore $|A|\le 4n-5$.
	\end{proof}
	
	Note that the above lemma is sharp, because the set $$\{(a_i,b_i)|a_i\in\{n,n-1\} \text{ or } b_i\in\{1,2\}\}\setminus\{(n,1)\}$$ 
	satisfies our conditions and has size $4n-5$. (Of course, we do not need this level of precision, but one might as well give a sharp bound if one can.)

\begin{proof}[Proof of Lemma \ref{key}]
	Let $T$ be an acyclic set of triples. For each $(x,y)\in[n]^2$, throw away the triples $(x,y,z)\in T$ for which $z$ is largest and smallest, if any such triples exist. That throws away at most $2n^2$ triples. Let the resulting set of triples be $S$.
	
	Suppose that some $z$ is used at least $4n$ times as the third coordinate of a triple in $S$. Let $A$ be the set of points $(x,y)$ such that $(x,y,z)\in S$. Then by Lemma \ref{preK} we can find three points $a,b,c\in A$ with $a<_2 b$ and $c$ not weakly 2-comparable to either $a$ or $b$. 
	
	Now we split into two cases. Suppose first that $c_1>a_1$. Then $c_2<a_2<b_2$, so $c_1>b_1$ as well. Since $(a_1,a_2,z)\in S$ and $z$ is not the largest third coordinate for $(a_1,a_2)$, we can find a triple $(a_1,a_2,w)\in T$ with $w>z$. Similarly, we can find a triple $(b_1,b_2,v)\in T$ with $v<z$. These two triples, together with the triple $(c_1,c_2,z)$, form a 3-cycle since $(a_1,a_2,w)<_2(b_1,b_2,v)<_2(c_1,c_2,z)<_2(a_1,a_2,w)$. 
	
	If $c_1<a_1$, the proof is very similar. Therefore, we cannot find $4n$ triples in $S$ that share a third coordinate. It follows that the number of triples in $S$ is at most $4n^2$, so the number of triples in $T$ is at most $6n^2$ and we are done. 
		\end{proof}

We will now move on to providing the base cases that we need for the induction argument. 

\subsection{The base case}
%***A big gain can also be achieved by using Jacob Fox's bound, Theorem 1 of http://arxiv.org/pdf/1006.1300v2.pdf, which is conveniently stated. Combining these two improvements may actually give us a bound for $\epsilon$ that we can reasonably state. However, unfortunately, Fox's statement includes a factor of $3^4$ that means the bound we get will have to be stated using the tower function.***

First, we let $N$ be the minimal positive integer such that $(2(N+1)^3/N^3)^{2/3}\le 5/3$. We write $N$ for this constant, which will appear throughout the inductive step, for the sake of conciseness.

For our base case, we need to find a positive integer $k$ and a real number $\theta<2/3$ such that if $\min\{r,s,t\}\leq Nk$, then every 2-increasing subset of $[r]\times[s]\times[t]$ has size at most $(rst)^\theta$. We obtain this by combining Loh's result (Theorem \ref{PS} above) with some simple observations.

First, we choose an integer $k$ with the property that any 2-increasing sequence of triples in $[k]^3$ has length at most $\delta k^2$, where $20 \delta^{1/10} = k^{-\epsilon}$ and $\epsilon$ is some positive constant. The existence of such a $k$ and $\epsilon$ follows from Theorem \ref{PS}. 

Having chosen $k$ and $\epsilon$, let $\theta_1=(2-\epsilon)/3$. It will turn out that we need to take $\theta\ge \theta_1$ for our inductive hypothesis to work.

Once we have chosen our $k$, we need every 2-increasing sequence of triples from $[r]\times[s]\times[t]$  with $\min\{r,s,t\}\le Nk$ to have length at most $(rst)^\theta$. This places further strong constraints on how small we are able to take $\theta$.

Without loss of generality, $r\leq s\leq t$. Then in order to ensure that the condition is satisfied, we first note that whenever $r,s$ and $t$ are not all equal the trivial bound $rs$ is equal to $(rst)^{\tau}$ for some $\tau(r,s,t)=\log(rs)/\log(rst)<2/3$. The expression on the left-hand side decreases as $t$ increases and increases as $s$ increases, so it is maximized, for fixed $r$, when $s=t=r+1$ (using our assumption that $r\leq s\leq t$ and that $r\ne t$). Now allowing $r$ to vary between 1 and $Nk$ we find that $\tau(r,s,t)$ is maximized when $r=Nk, s=t=Nk+1$, when it takes the value $\log(Nk(Nk+1))/\log(Nk(Nk+1)^2)$. Let us call this maximum $\theta_2$. We will need $\theta$ to be at least $\theta_2$.

It remains to deal with the cases in which $r=s=t\le k$. For this we need a simple lemma.

\begin{lemma} \label{easybound}
A 2-comparable set $T$ of triples in $[r]^3$ has size at most $t(r)$, where $t(r)=3r^2/4$ if $r$ is even and  $t(r)=3r^2/4+r/2+3/4$ if $r$ is odd.
\end{lemma}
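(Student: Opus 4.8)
The plan is to bound a 2-comparable set $T \subseteq [r]^3$ by looking at it one pair of coordinates at a time. Recall that if two triples agree in two coordinates they are not 2-comparable, so for each pair $(x,y) \in [r]^2$ there is at most one triple in $T$ with first two coordinates $(x,y)$; in particular $|T| \le r^2$, and a triple of $T$ is determined by any two of its coordinates. The key idea is to count, for each of the three coordinate-projections $\pi_{12}, \pi_{13}, \pi_{23} : [r]^3 \to [r]^2$, the image $\pi_{ij}(T)$, which is a set of $|T|$ distinct points in $[r]^2$, and to show that at least one of these three images must avoid a large ``antichain-like'' structure forced by 2-comparability. Concretely, I would first establish the auxiliary fact: if $a <_2 b$ then for each pair of coordinates $\{i,j\}$, either $a_i < b_i$ or $a_j < b_j$ (since $a$ beats $b$ in at most one coordinate). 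Hence for every pair $\{i,j\}$, the projection $\pi_{ij}(T)$ contains no two points that are ``reverse-comparable'' in the coordinatewise order, i.e.\ $\pi_{ij}(T)$ has the property that you never have one point strictly dominating another in \emph{both} coordinates with the global order reversed — more usefully, among the three projections the ``defect'' coordinate (the one where $a$ might exceed $b$) can be pinned down.

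The cleaner route, which I would actually pursue, is a direct double-counting. Order $T = (t^1, \dots, t^m)$ as a 2-increasing sequence (possible after the acyclicity remark, if $T$ happens to be acyclic) — but since $T$ is only assumed 2-comparable, I instead argue as follows. For each ordered pair $k < l$ in some fixed linear order witnessing comparability, exactly one coordinate $i \in \{1,2,3\}$ has $t^k_i \ge t^l_i$; call $i$ the \emph{loser coordinate} of the pair. This partitions the $\binom{m}{2}$ pairs into three classes $P_1, P_2, P_3$ by loser coordinate. If pair $\{k,l\}$ lies in $P_i$, then the other two coordinates $j, j'$ satisfy $t^k_j \ne t^k_{j'}$-comparisons strictly, so the projection of $T$ to coordinates $\{j,j'\}$ restricted to $\{t^k, t^l\}$ is a strictly monotone pair; consequently, within $P_i$ the projection $\pi_{jj'}$ is \emph{injective on strict chains} and each fiber of the loser coordinate value behaves like a strict chain in the other two coordinates. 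Counting: for fixed loser coordinate $i$ and fixed value $v \in [r]$ of that coordinate, the triples of $T$ with $t_i = v$ form a set whose projection to the other two coordinates is strictly monotone, hence has size at most $r$; more carefully, partitioning by the value of a second coordinate too, one gets that such a set has size at most $r$ but the monotonicity (strictly decreasing in one, strictly increasing in the other along the witnessing order) caps it at $\min$ of the two side lengths, i.e.\ at most $r$, and summing over $v$ and optimizing the bookkeeping yields the claimed $3r^2/4$.

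The main obstacle — and where the even/odd split and the $+r/2+3/4$ correction enter — is making the above counting tight rather than lossy. The naive bound ``each of the three projections has $\le r^2$ points, so $3|T| \le 3r^2$'' is useless; one needs the observation that the three projections cannot all be large simultaneously because a point $(x,y,z)$ of $T$ contributes to all three, and the 2-comparability constraint between two triples is ``used up'' in only one projection pair, creating a trade-off. I would set up a weighting/charging scheme assigning each triple a cost split among the coordinate pairs and show the total is at least (a function of) $|T|$ while being at most $3r^2/4 + O(r)$, then extract the exact constant by analyzing the extremal configuration (which should be a near-balanced union of three ``staircase'' blocks, one per coordinate pair). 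For odd $r$ the staircases cannot be perfectly balanced, forcing the additive correction $r/2 + 3/4$; verifying this is the one genuinely fiddly computation, but it is a finite optimization over how to split $r$ into the three blocks, so it is routine once the framework is in place.
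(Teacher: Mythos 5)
Your preliminary observations are correct and overlap with the paper's setup: you rightly note that the projection $\pi_{12}(T)$ has the same size as $T$ (since the third coordinate is determined by the first two under 2-comparability), and you rightly observe that fixing one coordinate value $v$ forces the remaining two coordinates to be strictly increasing on the fibre, so each such fibre has size at most $r$. However, that bound only recovers $|T|\le r\cdot r=r^2$ when you sum over $v$, and you do not show how to push it to $3r^2/4$. The place where you write ``summing over $v$ and optimizing the bookkeeping yields the claimed $3r^2/4$'' is exactly where the argument breaks: there is no bookkeeping that turns the bound $|\{t: t_i=v\}|\le r$, summed over $v\in[r]$, into anything better than $r^2$. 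You then appeal to an unspecified ``weighting/charging scheme'' and a ``trade-off'' between the three projections, but you do not give it; you yourself identify ``making the above counting tight rather than lossy'' as the main obstacle, and the proposal does not overcome it.

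The missing idea, which is the heart of the paper's proof, is to stratify $A=\pi_{12}(T)$ not by a single coordinate value but by $\max\{x,y\}$. Set $A_i=\{(x,y)\in A:\max\{x,y\}=i\}$. The set of grid points with $\max\{x,y\}=i$ is an L-shape of size $2i-1$, which gives $|A_i|\le 2i-1$. The crucial second bound is $|A_i|\le r$: if two points of $A_i$ share their max, then either a coordinate is equal, or the two coordinates are ordered oppositely, and in every case 2-comparability forces their $f$-values (third coordinates) to differ; since the third coordinate lies in $[r]$, at most $r$ of them can be distinct. Summing $\min(2i-1,r)$ over $i$ gives exactly $\sum_{i\le r/2}(2i-1) + r\lceil r/2\rceil$, which produces the $3r^2/4$ for even $r$ and the $3r^2/4+r/2+3/4$ for odd $r$. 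This ``max-stratification'' is what makes the small L-shapes (for $i\le r/2$) cheap and what caps the large ones at $r$; a fixed-coordinate-value stratification, which is what your ``loser coordinate'' route gives, cannot see this structure. A minor additional issue: in your fibre argument the two free coordinates are both strictly increasing along the fibre, not ``strictly decreasing in one, strictly increasing in the other'' as you wrote, though this does not affect the $\le r$ bound.
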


\begin{proof}
Let $A$ be the set of all $(x,y)\in[r]^2$ such that $(x,y,z)\in T$ for some $z$. If such a $z$ exists, it is unique, by the 2-comparability condition, so let us call it $f(x,y)$. 

Suppose that $(x_1,y_1), (x_2,y_2)\in A$ and $\max\{x_1,y_1\}=\max\{x_2,y_2\}$. Then $f(x_1,y_1)$ and $f(x_2,y_2)$ are distinct, since either $x_1=x_2$, $y_1=y_2$, or $x_1$ and $x_2$ are not ordered in the same way as $y_1$ and $y_2$. Here again we are using 2-comparability.

For $i=1,2,\dots,r$, let $A_i=\{(x,y)\in A:\max\{x,y\}=i\}$. Then trivially $|A_i|\leq 2i-1$, and the argument just given shows also that $|A_i|\leq r$. It follows that $|A|\leq\sum_{i=1}^{\lfloor r/2\rfloor}(2i-1)+r\lceil r/2\rceil$. If $r$ is even, this equals $(r/2)^2+r^2/2=3r^2/4$. If $r$ is odd, then it is $((r-1)/2)^2+(r+1)^2/2$, which equals the bound stated.
\end{proof}

Actually all we really need is that $T$ has size strictly less than $r^2$ when $r>1$: the above result improves our eventual bound, but not in an interesting way.

For each $r>1$, define $\tau(r)$ so that $r^{3\tau(r)}=t(r)$: that is, $\tau(r)=\log(t(r))/3\log r$. Let $\theta_3=\max\{\tau(r):r\leq Nk\}$. We shall also need the inequality $\theta\geq\theta_3$ for our proof to work.

We now fix $\theta=\max(\theta_1,\theta_2,\theta_3)$ and proceed with the inductive step of the argument.

\subsection{The inductive step}

Let $T$ be a transitive 2-comparable subset of $[r]\times [s]\times [t]$. We form a quotient set $T'\subset[k]^3$ by dividing each dimension into $k$ intervals as equally as possible. That is, if our divisions into intervals are $[r]=R_1\cup\dots\cup R_k$, $[s]=S_1\cup\dots\cup S_k$ and $[t]=T_1\cup\dots\cup T_k$, then $T'=\{(h,i,j):T\cap(R_h\times S_i\times T_j)\ne\emptyset\}$. We will assume that $\min\{r,s,t\}>Nk$, since otherwise we have one of our base cases and therefore the required estimate $|T|\leq(rst)^\theta$. 

This quotient operation does not preserve 2-comparability, but, crucially, it \emph{does} preserve the acyclic property. This follows simply from the fact that if we have a directed cycle of quotient triples then by taking a representative triple $t\in T$ from each quotient triple we get a directed cycle in $T$. It is for this reason that it is so useful to us that the acyclic property alone has strong consequences.

At this point, we could naively bound the number of triples in $T$ by applying our inductive hypothesis to bound the number of triples contained in each quotient triple, and multiplying by our upper bound on the size $|T'|$ of the quotient set. This gives us a bound of
\[(6k^2)\left(\frac{(N+1)^3rst}{N^3k^3}\right)^\theta.\]
Unfortunately this is larger than $(rst)^\theta$ when $\theta<2/3$, and so this is not quite powerful enough to complete the induction.

However, we can improve on this by grouping the quotient triples into collections for which we may obtain an improved estimate using our inductive hypothesis. For this, we use the following definition and lemma.

\begin{definition}
	Let $H$ be a collection of integer triples entirely contained in one of the planes $(x,*,*)$, $(*,y,*)$ or $(*,*,z)$. Suppose that when we project $H$ onto the two free coordinates (obtaining a collection $H_p$ of integer pairs) we have no two elements of $H_p$ that are 2-comparable as pairs, in the obvious sense. Then we say that $H$ is a \emph{collapsible} collection of triples.
\end{definition}

It turns out that we can apply our inductive hypothesis to bound more efficiently the number of triples from $T$ in a collection of quotient labels when the collection is collapsible.

\begin{lemma}\label{collapsible}
	Let $H$ be a collapsible collection of triples in the quotient set. Then the total number of triples from $T$ contained in the quotient triples of $H$ is at most
	\[\left(\frac{2(N+1)^3|H|rst}{N^3k^3}\right)^\theta.\]
\end{lemma}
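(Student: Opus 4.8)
The plan is to show that a collapsible collection $H$, say contained in a plane of the form $(h,*,*)$, behaves essentially like a two-dimensional object, so that summing the inductive bounds over the quotient triples of $H$ loses a factor polynomial in the two free side-lengths rather than the product of all three. First I would set up notation: suppose without loss of generality that $H$ lies in the plane $x=h$ for some fixed $h\in[k]$, so that every quotient triple in $H$ has the form $(h,i,j)$, and by collapsibility the projected pairs $(i,j)$ form an antichain for the $<_2$ relation on pairs — that is, no two of them have both coordinates strictly comparable in the same direction. For each quotient triple $(h,i,j)\in H$, the triples of $T$ sitting inside it live in the box $R_h\times S_i\times T_j$, which has side lengths at most $\lceil r/k\rceil\le (N+1)r/(Nk)$, $\lceil s/k\rceil\le (N+1)s/(Nk)$ and $\lceil t/k\rceil\le (N+1)t/(Nk)$ (using $\min\{r,s,t\}>Nk$ so that the ceilings behave); applying the inductive hypothesis to each such box gives at most $\left((N+1)^3 rst/(N^3k^3)\right)^\theta$ triples per quotient triple.

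The key improvement is not to sum this crudely over all $|H|$ quotient triples, but to observe that the second coordinates $i$ and third coordinates $j$ occurring in $H$ cannot repeat too often in a way that wastes room. Concretely: I would fix a value of the second coordinate $i$ and consider all quotient triples $(h,i,j)\in H$ with that $i$. By collapsibility, distinct such triples $(h,i,j)$ and $(h,i,j')$ have projected pairs $(i,j)$ and $(i,j')$ that are not $2$-comparable; since their first coordinates agree, $2$-comparability would be automatic unless $j=j'$. Hence for each fixed $i$ there is exactly one $j$, i.e.\ $H$ is a partial matching between second- and third-coordinate slabs, so $|H|\le k$ and, more importantly, the boxes $R_h\times S_i\times T_j$ appearing in $H$ have pairwise disjoint $S$-parts and pairwise disjoint $T$-parts. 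That disjointness is what lets us replace the naive bound $|H|\cdot\left((N+1)^3rst/(N^3k^3)\right)^\theta$ by something better: the union of these boxes fits inside $R_h\times[s]\times[t]$ and in fact inside a single box of dimensions $(N+1)r/(Nk)$ by $s$ by $t$ once we note the $S$- and $T$-parts tile subsets of $[s]$ and $[t]$ respectively. Applying the inductive hypothesis to $R_h\times[s]\times[t]$ — or more carefully to the concatenated box of size roughly $\frac{(N+1)r}{Nk}\times s\times t$ — gives $\le\left((N+1)rst/(Nk)\right)^\theta$. Comparing $\left(\frac{(N+1)rst}{Nk}\right)^\theta$ with the claimed $\left(\frac{2(N+1)^3|H|rst}{N^3k^3}\right)^\theta$, the latter is larger precisely when $2|H|(N+1)^2\ge N^2k^2$; since $|H|\le k$ this needs checking, and the constant $2$ and the $(N+1)^3/N^3$ factors are presumably there to absorb exactly this sort of slack (and the possibility that $H$ sits in an $(*,y,*)$ or $(*,*,z)$ plane instead, handled symmetrically by cycling coordinates).

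The main obstacle, and the step I would spend the most care on, is making the "glue the disjoint boxes into one box and apply induction once" move rigorous. The inductive hypothesis is a statement about $2$-increasing subsets of a single grid $[r']\times[s']\times[t']$, so to use it on the union $\bigcup_{(h,i,j)\in H}\bigl(R_h\times S_i\times T_j\bigr)$ I need this union to literally be (a subset of) such a grid after relabelling. Because the $S_i$'s occurring are disjoint intervals in $[s]$ and the $T_j$'s are disjoint intervals in $[t]$, I can compress: relabel the used $S$-coordinates to an initial segment $[s']$ with $s'\le s$ and the used $T$-coordinates to $[t']$ with $t'\le t$, and this relabelling is order-preserving on each slab hence preserves the $<_2$ relation between any two triples of $T$ living in the collection (the first coordinate is untouched, the second and third are only shuffled within their slabs, and since the slabs are disjoint the global order on second/third coordinates is respected). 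The resulting set is a $2$-increasing subset of $R_h\times[s']\times[t']$, a genuine grid of volume at most $\frac{(N+1)r}{Nk}\cdot s\cdot t$, and the inductive hypothesis applies. The one genuine subtlety is that the relabelled second and third coordinate ranges $s', t'$ could in principle be as small as $|H|$, which would make the bound from induction even smaller — that is fine, it only helps — but I must be sure I am not secretly assuming $s', t'$ are still $>Nk$ when invoking induction; the inductive hypothesis as stated has no such lower-bound hypothesis (the small cases are exactly the base cases already handled), so this is safe. Writing this compression argument cleanly, and then doing the one-line constant comparison to land on the exact stated bound, is all that remains.
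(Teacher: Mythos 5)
There is a genuine gap, and it sits at the very first step of your argument. You claim that if $(h,i,j)$ and $(h,i,j')$ are both in $H$, then by collapsibility $j=j'$, because "since their first coordinates agree, $2$-comparability would be automatic unless $j=j'$". This is backwards. Two pairs $(i,j)$ and $(i,j')$ that share a coordinate are \emph{never} $2$-comparable: $2$-comparability of pairs requires strict inequality in both coordinates, which is impossible when $i=i$. So collapsibility places no restriction at all on pairs sharing a coordinate, and an antichain under $<_2$ on pairs is a ``staircase'' rather than a partial matching. The maximum size of such an antichain in $[k]^2$ is $2k-1$ (take the union of the first row and first column), not $k$, and in particular the $S$-parts and $T$-parts of the boxes in $H$ are \emph{not} pairwise disjoint, so the compression into a single box of volume $\sim |H|\cdot (r/k)(s/k)(t/k)$ simply does not exist. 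Once this is lost, your fallback bound from applying induction to the whole slab $R_h\times[s]\times[t]$ gives $\bigl(\frac{(N+1)rst}{Nk}\bigr)^\theta$, and you correctly flag that this needs to be at most $\bigl(\frac{2(N+1)^3|H|\,rst}{N^3k^3}\bigr)^\theta$; but this comparison requires $|H|\gtrsim k^2$, whereas an antichain forces $|H|\leq 2k-1$. The discrepancy is a factor of order $k^\theta$, not a constant that the $(N+1)/N$ fudge factors can absorb, so the fallback fails too.

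The actual proof works at a finer granularity than ``glue the boxes and apply induction once''. Given the staircase structure of the antichain (sort the projected pairs so that one coordinate weakly decreases and the other weakly increases), the paper splits $H$ into two subsets $U$ and $V$, in one of which the first free coordinate is strictly monotone (hence injective) along the antichain, and in the other the second free coordinate is. Each of $U$ and $V$ is then partitioned into groups by the value of the non-injective coordinate; within a group, one dimension of the box is a single $S$- or $T$-slab, and — this is the crucial point — the actual third coordinates (the ones in the fixed plane's slab) occurring in the preimages of distinct groups are disjoint, because a shared third coordinate between two groups would produce two triples of $T$ that fail $2$-comparability. The inductive hypothesis is then applied to each group's small box separately, and the resulting sub-power sums are combined with H\"older's inequality (using $\theta\geq 1-\theta$) to reassemble $\sum a_i = |U|$ and $\sum c_i \lesssim t/k$ into a single $\bigl(|U|\cdot t/k\bigr)^\theta$-type factor. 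This many-boxes-plus-H\"older step is precisely what produces the linear $|H|$ dependence in the final bound, rather than the product of distinct $S$-values times distinct $T$-values that a gluing argument would give; the latter can be as large as $\sim|H|^2/4$. So the route you proposed cannot be repaired by tightening constants — it is missing the decomposition-and-H\"older mechanism that the lemma actually hinges on.
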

\begin{proof}
	 Let us assume (without loss of generality) that the triples in $H$ agree in their third coordinate, and let this coordinate be $z$. So the triples can be written in a sequence as $(u_1,v_1,z),\dots,(u_m,v_m,z)$ with $u_1\geq\dots\geq u_m$ and $v_1\leq\dots\leq v_m$. 
	
	Let us partition $H$ into two sets $U$ and $V$, where $U$ is the set of $(u_i,v_i,z)$ such that $u_i<u_{i+1}$ and $V=H\setminus U$. Then if $i<j$ and $(u_i,v_i,z),(u_j,v_j,z)\in U$, we have that $u_i<u_j$. Also, if $i<j$ and $(u_i,v_i,z),(u_j,v_j,z)\in V$, then $v_i<v_j$, since if $v_i$ were to equal $v_j$ then $v_i=v_{i+1}$, which implies that $u_i>u_{i+1}$ and therefore that $(u_i,v_i,z)\in U$. Thus, we have partitioned $H$ into two sets, in one of which the $u_i$ strictly increase, and in the other of which the $v_i$ strictly increase.
	
	Now let us partition $U$ further into sets $U_i$, according to the value of the second coordinate. The main fact that enables us to get a good bound is that if $i\ne j$ and $q$ is the quotient map, then no point in $q^{-1}(U_{i})$ can share a third coordinate with a point in $q^{-1}(U_{j})$. That is because if $i<j$, then points in $q^{-1}(U_{i})$ have a higher first coordinate and a lower second coordinate than points in $q^{-1}(U_{j})$. 
	
	Let us suppose then that $|I_{i}|=a_i$ and that $c_i$ different third coordinates occur in $q^{-1}(U_{i})$. Then $\sum a_i=|U|$ and $\sum c_i\le \lceil t/k\rceil\le (N+1)t/Nk$. Also, by our inductive hypothesis, the number of points in $q^{-1}(U_{i})$ is at most $((N+1)^2a_irsc_i/N^2k^2)^\theta$, since they live in a Cartesian product of three sets that have sizes at most $(N+1)a_ir/Nk$, $(N+1)s/Nk$, and $c_i$. Summing, over $i$, we find that 
	\[|T\cap q^{-1}(U)|\leq\sum_i\bigg(\frac{(N+1)^2a_ic_irs}{N^2k^2}\bigg)^\theta.\]
	Similarly, we can partition $V$ into sets $V_{i}$ with $|V_{i}|=b_i$ and at most $c_i'$ different third coordinates occurring in $q^{-1}(V_{i})$, then $\sum b_i=|V|$ and $\sum c_i'\leq (N+1)t/Nk$, and we have the bound
	\[|T\cap q^{-1}(V)|\leq\sum_i\bigg(\frac{(N+1)^2b_ic_i'rs}{N^2k^2}\bigg)^\theta.\]
	
	Now
	\[\sum(x_jy_j)^{\theta}\le\big(\sum x_j\big)^\theta\big(\sum y_j^{\theta/(1-\theta)}\big)^{1-\theta}\le\big(\sum x_j\big)^\theta\big(\sum y_j\big)^\theta\]
	by H\"older's inequality, the monotonicity of $l_p$ norms, and the fact that $\theta\geq 1-\theta$. Applying this to the sum of the above two expressions and using our bounds for $\sum a_j$ and $\sum b_h$, $\sum c_j$ and $\sum c_h'$, we deduce that
	\[|T\cap q^{-1}(H)|\le\bigg(\frac{2(N+1)^3|H|rst}{N^3k^3}\bigg)^\theta.\]
\end{proof}

Now the key idea is to partition the quotient set into two parts, the first of which is a union of large collapsible collections and the second of which is a genuine 2-increasing sequence. The contribution to the size of $T$ from the first part will be controlled by using the collapsibility, while the second part will be controlled by the bound on the length of a 2-increasing sequence in $[k]^3$ obtained in the base case.

This splitting is achieved using the following lemma.

\begin{lemma}\label{split}
	Suppose that $S$ is a collection of triples containing no collapsible collection of size $C$. Then $S$ contains a 2-comparable subset of size at least $C^{-3}|S|$.
\end{lemma}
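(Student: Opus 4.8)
The plan is to peel $S$ apart one coordinate at a time, reducing at each stage to a two-dimensional problem where the relevant relation is an honest partial order. The point is that although $<_2$ is not transitive on triples, its restriction to \emph{pairs} --- where $a<_2b$ simply means $a_1<b_1$ and $a_2<b_2$ --- is the strict product order, so on each coordinate slice we are in the setting of Dilworth's (or Mirsky's) theorem.

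First I would record the two-dimensional fact. If $P$ is a finite set of integer pairs in which no $C$ of the pairs are pairwise non-2-comparable (that is, $P$ contains no antichain of size $C$ in the product order), then $P$ contains a 2-comparable subset of size at least $|P|/(C-1)$. This is Mirsky's theorem, or a one-line argument: assign to each $p\in P$ the length $\ell(p)$ of the longest $<_2$-chain in $P$ ending at $p$; if $p<_2q$ then $\ell(q)>\ell(p)$, so each level set $\ell^{-1}(j)$ is pairwise non-2-comparable and hence has fewer than $C$ elements, forcing the longest chain to have length at least $|P|/(C-1)$. Here transitivity of $<_2$ on pairs is exactly what makes $\ell$ well behaved.

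Next comes the slicing. Fix a coordinate $i$ and a value $v$; the triples of any collection whose $i$-th coordinate equals $v$ all lie in one coordinate plane, and a subcollection of them that is pairwise non-2-comparable is, after deleting the (constant) $i$-th coordinate, precisely a collapsible collection --- of the same size, since the projection is injective on a plane. So the hypothesis that there is no collapsible collection of size $C$ passes both to every subset of $S$ and to every such slice viewed as a set of pairs. I would therefore run three rounds. Starting from $S_0=S$, in the round handling coordinate $3$ I partition $S_0$ by the value of the third coordinate, apply the two-dimensional fact to each block (viewed in coordinates $1,2$) to extract a 2-comparable subset comprising at least a $1/(C-1)$ fraction of it, and take the union of these subsets to be $S_1$; since a 2-comparable set of pairs lifts to a 2-comparable set of triples on restoring the common third coordinate, $S_1$ has the property that any two of its elements agreeing in coordinate $3$ are 2-comparable, and $|S_1|\ge|S_0|/(C-1)$. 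I then repeat on $S_1$ handling coordinate $2$, obtaining $S_2\subseteq S_1$, and on $S_2$ handling coordinate $1$, obtaining $S_3\subseteq S_2$; each round's invariant is inherited by all later subsets, so $S_3$ has, for every coordinate $i$, the property that elements of $S_3$ agreeing in coordinate $i$ are 2-comparable, and $|S_3|\ge|S|/(C-1)^3\ge C^{-3}|S|$ (the case $C=1$ being vacuous, since then $S=\emptyset$).

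Finally, $S_3$ is 2-comparable. Take two distinct triples of $S_3$: if they agree in some coordinate $i$, then the invariant from the round handling coordinate $i$ makes them 2-comparable; and if they agree in no coordinate, then each of the three coordinates strictly increases or strictly decreases between them, so by pigeonhole at least two move in the same direction and the triples are again 2-comparable. The two-dimensional lemma is standard, so I expect the only parts needing genuine care are the bookkeeping in this last step, together with the claim that three rounds suffice: one must check that \emph{every} non-2-comparable configuration of two distinct triples has a coordinate in which the two triples agree --- the possibilities being that they agree in two coordinates, or agree in exactly one and move in opposite directions in the other two --- so that it is destroyed by the round handling that coordinate, and that passing to subsets never destroys an invariant already achieved.
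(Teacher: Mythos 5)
Your proposal is correct and follows essentially the same route as the paper: slice by each coordinate in turn, use the Mirsky/Dilworth fact on each two-dimensional slice to extract a $\geq 1/(C-1)$ fraction chain, intersect the three rounds, and finish with the observation that two non-2-comparable triples must share a coordinate. The only cosmetic difference is that you spell out a self-contained one-line proof of the Mirsky-type fact via the longest-chain level-set argument, whereas the paper simply cites Mirsky's theorem.
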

\begin{proof}
	For the plane $P_x=(x,*,*)$, let $S_x=S\cap P_x$. Clearly the triples in the set $S_x$ are partially ordered by $<_2$, and the antichains in this set are precisely the collapsible collections.
	
	Since $S$ has no collapsible collection of size larger than $C$, we have that $S_x$ has no antichain of length greater than $C$ and therefore (by Mirsky's Theorem) it must have a chain $S_x'$ of length at least $C^{-1}|S_x|$. 
	
	Let $S_1$ be the subset $\cup_x S_x'$. We see that $|S_1|\ge C^{-1}|S|$.
	
	Now we do the same with the $y$-coordinate, obtaining a subset $S_2$, and then again with the $z$-coordinate, obtaining a subset $S_3$. We have that $|S_3|\ge C^{-3}|S|$, and for any subset of $S_3$ obtained by fixing a coordinate the elements of this subset are totally ordered by $<_2$.
	
	This means that $S_3$ is 2-comparable, since for two triples to fail to be 2-comparable they must share a coordinate and thus must both lie in one of the planes that we have treated above. Since restricting $S_3$ to this plane gives a subset totally ordered by $<_2$, the triples must be 2-comparable.
\end{proof}

Let $C$ be a fixed constant, which we shall specify later. We may repeatedly extract collapsible collections of size $C$ from the quotient set $T'$ until we are left with a set $S$ at which point the extraction fails. When that happens, Lemma~\ref{split} implies that $S$ must have a 2-comparable subset $S'$ of size $C^{-3}|S|$. 

However, since $S'\subset T'$ and $T'$ is acyclic, $S'$ is also acyclic, which implies that it corresponds to a 2-increasing sequence (since for 2-comparable sets the acyclic property implies transitivity of the relation $<_2$). Since $T'$ contains no 2-increasing sequence of length $\delta k^2$ by our base case, we have that $C^{-3}|S|\le \delta k^2$.

Now we may use Lemma~\ref{collapsible} to bound the number of triples in $T$. We have split the quotient set $T'$ into a set $S$ of size at most $C^3\delta k^2$, and the rest of $T'$ which partitions into collapsible collections of size $C$. We therefore find that
\[|T|\le \frac{|T'|-|S|}{C}\left(\frac{2(N+1)^3Crst}{N^3k^3}\right)^\theta+|S|\left(\frac{2(N+1)^3rst}{N^3k^3}\right)^\theta\]
\[\le \left(\frac{6k^2}{C}\left(\frac{2(N+1)^3C}{N^3k^3}\right)^\theta+C^3\delta k^2\left(\frac{2(N+1)^3}{N^3k^3}\right)^\theta\right)(rst)^\theta.\]
Taking $C$ to be such that $C^3\delta=6C^{2/3}/C=A$ we get $C=6^{3/10}\delta^{-3/10}$ and $A=6^{9/10}\delta^{1/10}$. Therefore 
\[|T|\le \left(2.6^{9/10}\delta^{1/10}k^2\left(\frac{2(N+1)^3}{N^3k^3}\right)^\theta\right)(rst)^\theta\]
which, by our choice of $N$, is
\[\le \left(20\delta^{1/10}k^{2-3\theta}\right)(rst)^\theta.\]
But our choice of $k$ from the base case gives us that
\[20\delta^{1/10}k^{2-3\theta}\le k^{-\epsilon}k^{2-3\theta}=k^{2-3\theta-\epsilon}\]
and 
\[2-3\theta-\epsilon<0\]
by our choice of $\theta$ so the induction follows and the proof of Theorem \ref{PB} is complete.

\subsection{Remarks}
\subsubsection{Size of $\theta$}
Here we shall give a very brief examination of the size of the $\theta$ that emerges from the argument. It is not worth being too careful here, as we have made little effort to tighten up the argument and because the use of Proposition~\ref{PS} means that the difference $\frac23-\theta$ is unavoidably extremely small.

First of all, it is important to get an explicit version of Proposition~\ref{PS} that gives us a constant to replace the $\Omega$ notation. For this we can use the best known bound for the triangle removal lemma, due to Fox~\cite{Fox}, and we obtain a quantitative version of Proposition~\ref{PS}, namely that
$$F(n)\le n^2/\exp(\log^*(n)/405).$$
It is also easy to check that in the base case $\theta_2\ge \theta_3$ so $\theta_3$ is of no concern.

In order to get $\epsilon>0$ in the expression $20\delta^{1/10}\le k^{-\epsilon}$, using $\delta=\exp(-\log^*(k)/405)$ as is allowed by the above, we need 
$$\exp(\log^*(k)/405)>20^{10}$$
and so we will need $k\ge T(405\log(20^{10})$, where $T$ is the tower function. Note that $405\log(20^{10})<12133$. We have that
\[\theta_2=\frac{\log(Nk)+\log(Nk+1)}{\log(Nk)+2\log(NK+1)}\]
\[=2/3-\frac{1}{9Nk\log(Nk)}+\mathcal{O}(1/(Nk)^2)\]
and since $k$ is huge this gives us $\theta_2\approx 2/3-\frac{1}{9Nk\log(Nk)}$. Certainly, if we take $k=T(12133)$, say, then we have $\theta_2<2/3-\frac{1}{T(12133)}$.

All that remains is $\theta_1$, which is given by $(2-\epsilon)/3$ where $k^{-\epsilon}>20\delta^{1/10}=20\exp(-\log^*k/4050)$. If we take $k=T(12133)$ then we have
$$\epsilon=\log(\exp(12133/4050)/20)/\log(T(12133))>1/T(12133)$$
and so certainly $\theta_1<2/3-\frac{1}{T(12133)}$ also.

Putting this together, we are able to choose $\theta=2/3-\frac{1}{T(12133)}$. With more effort to optimize the proof, the $T(12133)$ might be able to be brought down somewhat but a significant change to the base case is required to avoid the tower function.
\subsubsection{Limitations and Scope for Improvements}
The key to the argument that we have just given is that we may use the acyclic property on its own to bring the size of $H$ down from order $k^3$ to order $k^2$. Once we have realized this fact, it is fairly clear that we should be able to make a power-type improvement over the trivial $n^2$ bound on $T$ by partitioning the quotient structure, which can be controlled by using the acyclic property, into collections for which we can apply the inductive hypothesis efficiently.

However, there is a fundamental slackness in the argument as described above, since even if we could take $\delta=k^{-1/2}$ in the base case (the best we could hope for), we would end up with $\epsilon\approx 1/20$ and a rather tiny improvement to the upper bound. Therefore, even if we had enough computational power to verify for any finite $k$ the conjectured bound of $(rst)^{1/2}$ for the maximal length of 2-increasing sequences from $[r]\times[s]\times[t]$ with $r\le k$, so that we would could get $\epsilon$ as close as we like to 1/2 in the base case, we would only be able to obtain a bound for $\theta$ that was arbitrarily close to $2/3-1/20$ rather than to the $1/2$ that we would expect.

One way that we could hope to improve this is to gain a better understanding of the structure of acyclic sets. In the current argument we observe that the quotient structure is acyclic, which limits the number of labels to $\mathcal{O}(k^2)$, but then we fall back on rather primitive methods to decompose it into collapsible subsets. Indeed, collapsible subsets are not the only ones for which we can obtain a more efficient application of the inductive hypothesis. If we were always able to decompose acyclic sets of triples into a wider class of subsets that allow for efficient induction we could hope to improve the argument substantially. It seems very likely, therefore, that one can do better than this, especially since the structure of acyclic sets with almost the maximum size seem to be quite restricted. 

\section{A lower-bound for $[3,2]$-sequences}\label{[32]}
In this section we shall describe a construction that beats the $n^{3/2}$ lower bound. We will then discuss the upper bound, for which any improvement over the result of Loh has proved elusive.

\subsection{A reformulation using labels in grids}

In this section we will be presenting various examples of 2-comparable sets of triples. If they are presented just as lists, then it is somewhat tedious to check that they are 2-comparable. However, there is a simple reformulation that is much more convenient for the purposes of looking at and understanding small examples of $[3,2]$-sequences, and also $(3,2)$-sequences. We briefly describe it here.

Given a 2-increasing sequence $T$ of triples from $[r]\times[s]\times[t]$, we define the \emph{grid representation} of $T$ by considering each triple as a labelled point in the grid $[r]\times[s]$. That is, we think of the triple $(a,b,c)$ as the point $(a,b)$ labelled with $c$. Thus the whole sequence $T$ corresponds to a labelling of some of the points of an $r\times s$ grid with labels from $[t]$.

As an example, the grid representation of the set 
\[T=(1,1,1),(1,2,2),(2,1,3),(2,2,4),(3,3,1),(3,4,2),(4,3,3),(4,4,4)\]
is
\[\young({~~24,~~13,24~~,13~~})\]

Of course there is no particular reason to consider the third coordinate to be the label coordinate, and it is sometimes instructive to look at the same example in three different ways.

Now let us think about the restrictions imposed on labelled subsets of the grid if they are grid formulations of 2-increasing sequences of triples. 

We begin by considering what follows from the 2-comparability condition. Note that if two triples do not share a coordinate, then they are automatically 2-comparable, so the condition is equivalent to saying that if $a$ and $b$ are two triples that share one coordinate, then either $a$ is less than $b$ in both the other coordinates, or $a$ is greater than $b$ in both the other coordinates. It follows from this that in the grid representation, if two points are in the same row, then the point to the right has a higher label than the point to the left, and if two points are in the same column, then the higher point has a higher label than the lower point. To put this more concisely, labels strictly increase as you go along a row or up a column. If it is the label coordinate that is fixed, then the condition states that the points with a given label must form a sequence that moves up and to the right, or in other words a 2-increasing sequence of pairs. That is, if $(x_1,x_2)$ and $(y_1,y_2)$ have the same label, then either $x_1<y_1$ and $x_2<y_2$ or $y_1<x_1$ and $y_2<x_2$. Equivalently (given that the same label cannot occur twice in a row or column), if $x_1<y_1$ but $x_2>y_2$, then $(x_1,x_2)$ cannot have the same label as $(y_1,y_2)$. 

The additional constraint in the 2-increasing case is that $T$ must be acyclic, which, if $T$ is 2-comparable, is equivalent to saying that the relation $<_2$ is transitive when it is restricted to $T$. In the grid representation, a collection of triples that violates transitivity corresponds to having cell $(a,b)$ filled with label $c$ and cell $(a',b')$ filled with label $c'$, where $a'>a,b'>b$ and $c'<c$, and having a third cell $(a'',b'')$ with label $c''$ and $c'<c''<c$ where either $a''>a'$ and $b''<b$ or $a''<a$ and $b''>b$. 

This configuration is much easier to express pictorially. Given two cells, with the one with smaller label $c$ above and to the right of the other with larger label $c'$, we define two regions of the grid. The first is the region above and to the left of both cells, and the second is the region below and to the right of both cells. To get a configuration that violates transitivity we simply place a label between $c$ and $c'$ in one of these regions. For an example, see Figure \ref{Fig1}.
\begin{figure}
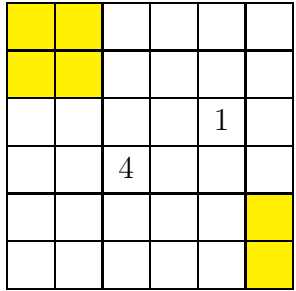

	\[
	\begin{ytableau}
	*(yellow) & *(yellow) &&&& \\
	*(yellow) & *(yellow) &&&& \\
	& &&&$1$& \\
	&  &$4$&&& \\
	&  &&&& *(yellow)\\
	&  &&&& *(yellow)\\
	\end{ytableau}
	\]
	\caption{An example of the two regions described previously, highlighted in yellow. If the label 2 or 3 is placed within one of these regions, we get an intransitivity.}
	\label{Fig1}
\end{figure}

Thus, grid representations of 2-increasing sequences are characterized by the three properties below, and grid representations of 2-comparable sequences are characterized by the first two properties.
\begin{enumerate}
	\item It increases along rows and up columns.
	\item The set of points with any given label forms a 2-increasing sequence.
	\item It must not contain a transitivity-breaking configuration of the kind just described.
\end{enumerate}

\subsection{A continuous generalization}

Here we will give a natural continuous generalization of the $[3,2]$ problem (and also the $(3,2)$ problem), which extends the grid formulation discussed in the previous section. We use the word ``cuboid" to mean an axis-parallel cuboid.

\begin{definition}
Let $I$ and $J$ be two real intervals. Say that $I<J$ if $x<y$ for every $x\in I$ and $y\in J$. If $I_1,I_2,I_3$ and $J_1,J_2,J_3$ are real intervals, then $I_1\times I_2\times I_3<_2J_1\times J_2\times J_3$ if $I_h<J_h$ for at least two values of $h$. If $C$ and $C'$ are two cuboids, then they are \emph{2-comparable} if $C<_2C'$ or $C'<_2C$. A sequence of cuboids $C_i\subset\mathbb R^3$ is \emph{2-increasing} if $C_i<_2C_j$ whenever $i<j$. It is \emph{2-comparable} if any two distinct $C_i$ are 2-comparable.
\end{definition}

Given a set of triples in $[r]\times[s]\times[t]$, we can convert it into as a collection of open unit cubes in the cuboid $[0,r]\times [0,s]\times[0,t]$ (where the triple $(a,b,c)$ corresponds to the unit cube with corner $(a,b,c)$ furthest from the origin). The resulting collection of cubes is 2-increasing/2-comparable if and only if the set or triples is 2-increasing/2-comparable.

This leads to the following generalization of the discrete question.

\begin{problem}\label{CV}
	Let ${B}=\{B_i\}$ be a set of disjoint open cuboids lying in $[0,1]^3$. Define $\|B\|_\alpha$ by the formula
	\[\|B\|_\alpha=\big(\sum_i |B_i|^{\alpha}\big)^{1/\alpha}.\]
	Let $\theta$ be the supremum over all $\alpha$ such that there exists a finite, 2-comparable collection $B$ of at least two cuboids with
	\[\|B\|_\alpha\geq 1.\]
	What is the value of $\theta$?
\end{problem}

Observe that if $|B|=1$ then we can take $B$ to consist of the whole unit cube and then $\|B\|_\alpha=1$ for all $\alpha$, so we exclude this case. If $|B|>1$ then for $\alpha>1$ we have $\|B\|_\alpha<\|B\|_1< 1$ since we cannot hope for the $B_i$ to cover the whole of the encompassing cube. This tells us that $\theta$ exists and is at most 1. 

Taking 
\[B_1=(0,1/2)\times (0,1/2)\times (0,1)\]
and
$$B_2=(1/2,1)\times(1/2,1)\times(0,1)$$
and setting $B=\{B_1,B_2\}$ we have that $\|B\|_{1/2}=1$, so $\theta\ge1/2$. 

We now show that this continuous generalization is, in a suitable sense, equivalent to the discrete problem.

\begin{lemma}\label{equiv}
	Let $\theta$ be such that there exists a finite, 2-comparable/2-increasing collection $B$ of at least two cuboids in $[0,1]^3$ with
	\[\|B\|_\theta=1.\]
	Then for any $\epsilon>0$ there exist $n$ and a finite, 2-comparable/2-increasing collection $T$ of integer tuples, each lying in $[n]^3$, with
	\[|T|\ge n^{3\theta-\epsilon}.\]
	The converse also holds, in the sense that given a collection $T$ with $|T|= n^{3\theta}$ we get a collection $B$ with $\|B\|_\theta=1$.
\end{lemma}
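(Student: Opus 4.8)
\textbf{Proof proposal for Lemma \ref{equiv}.}

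The plan is to prove the two directions separately, with the forward (continuous $\to$ discrete) direction being the substantive one. For the forward direction, suppose $B=\{B_1,\dots,B_\ell\}$ is a finite $2$-comparable (resp.\ $2$-increasing) collection of cuboids in $[0,1]^3$ with $\|B\|_\theta=1$, say $B_i=\prod_{h=1}^3 (x_h^{(i)},y_h^{(i)})$. First I would dilate by a large integer scale: fix $M\in\mathbb N$ and replace each $B_i$ by the axis-aligned integer box $\widehat B_i=\prod_h(\lceil Mx_h^{(i)}\rceil,\lfloor My_h^{(i)}\rfloor)$ inside $[M]^3$. For $M$ large these are still nonempty, still pairwise disjoint, and the comparability/increasing relations are preserved, because $I<J$ for two intervals is an open condition that survives rounding the endpoints (here one uses that $B$ is a \emph{finite} collection, so there is a uniform gap to exploit). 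The key point is a volume estimate: $|\widehat B_i|\ge (M|B_i|)(1-o(1))$ as $M\to\infty$, uniformly over the finitely many $i$. Now I would form the discrete collection $T$ by taking \emph{every} integer triple inside each $\widehat B_i$; since within a single box any two integer triples agree in no coordinate that would force a failure of $2$-comparability — more precisely two triples in the \emph{same} box are automatically $2$-comparable (they differ in all three coordinates unless equal) and $2$-increasing when ordered lexicographically, while two triples in \emph{different} boxes inherit the relation from the boxes — the set $T$ is $2$-comparable (resp.\ $2$-increasing; for the increasing case one must order the boxes by the total order extending $<_2$, which exists precisely because the box collection is acyclic, and order within each box compatibly). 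Then $|T|=\sum_i |\widehat B_i| \ge (1-o(1))\sum_i M^3|B_i| \cdot$ --- wait, that is the wrong normalization; rather $|\widehat B_i|\approx M^3 |B_i|$, so $|T|\approx M^3\sum_i|B_i|$, and I want a bound in terms of $\|B\|_\theta$, not $\|B\|_1$.

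The fix, and this is the heart of the argument, is that we should not use all integer points in each box but should instead choose $n$ and a \emph{sub-box} sizing so that the per-box count is $n^{3\theta}|B_i|/\|B\|_\theta^{?}$ --- concretely: given $B$ with $\|B\|_\theta=1$, pick a large integer $m$ and set $n=m$; we want a $2$-comparable collection of integer triples of total size $\approx n^{3\theta}$. Inside a box of side-lengths $(\lambda_1,\lambda_2,\lambda_3)$ with $\prod\lambda_h=|B_i|$, a copy of the trivial lower-bound construction from Proposition \ref{triv2} (the $2$-comparable version, using that $G_{3,2}$ of a $p\times p\times p$ grid is $\ge p^{3/2}$, and more generally of a $p\times q\times r$ grid is $\gtrsim (pqr)^{1/2}$) gives, when the box is scaled up to integer side-lengths $\approx n\lambda_h$, a $2$-comparable set of size $\gtrsim (n^3|B_i|)^{1/2}=n^{3/2}|B_i|^{1/2}$ inside that box. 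But $1/2$ is exactly $\theta$ in the extremal example and in general we only know $\theta\ge 1/2$, so this particular embedding proves the lemma only when $\theta=1/2$. To get it for the given $\theta$ we must instead embed, \emph{recursively}, a near-optimal discrete construction: the statement should really be read as transferring a continuous packing into a discrete one of comparable exponent, so I would argue as follows — scale $B$ up by $n$ to get integer boxes $\widehat B_i$ with $|\widehat B_i|\approx n^3|B_i|$, then \emph{inside} each $\widehat B_i$ recursively place the best known discrete $2$-comparable configuration at that scale; if $g(N)$ denotes $\max$ size of a $2$-comparable set in an $N\times N\times N$ grid then we get $|T|\gtrsim \sum_i g\big((n^3|B_i|)^{1/3}\big)$, and optimizing the self-consistency $g(N)\gtrsim N^{3\theta}$ is exactly the fixed-point that the definition of $\theta$ in Problem \ref{CV} encodes. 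Unwinding: it suffices to produce \emph{one} discrete example of exponent $3\theta-\epsilon$, and the cuboid packing with $\|B\|_\theta=1$ together with the product operation of Lemma \ref{prod} (iterated) bootstraps any such configuration; so the clean route is: start with the trivial discrete seed of exponent $3/2$, use the cuboid $B$ to glue copies together gaining a multiplicative factor, iterate, and check the exponent converges to $3\theta$. I expect the bookkeeping of the iteration — showing the exponent of the discrete construction converges to $3\theta$ rather than getting stuck at $3/2$, and that $2$-comparability/acyclicity is preserved under the gluing at every stage — to be the main obstacle, and it is essentially the same bookkeeping as in Lemma \ref{prod} combined with the subadditivity of $\|\cdot\|_\theta^\theta$.

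For the converse, the argument is soft: given a $2$-comparable/$2$-increasing collection $T\subset[n]^3$ with $|T|=n^{3\theta}$, replace each triple $(a,b,c)$ by the open unit cube with far corner $(a,b,c)$, as described just before Problem \ref{CV}; these cubes are disjoint, lie in $[0,n]^3$, and the collection is $2$-comparable/$2$-increasing by the remark in the excerpt. Rescaling $[0,n]^3$ to $[0,1]^3$ by dividing all coordinates by $n$ turns each unit cube into a cube of side $1/n$ and volume $n^{-3}$; the comparability relations are scale-invariant so they persist. Then $\|B\|_\theta^\theta=\sum_{i=1}^{|T|} (n^{-3})^\theta=|T|\cdot n^{-3\theta}=n^{3\theta}\cdot n^{-3\theta}=1$, so $\|B\|_\theta=1$ exactly, as claimed. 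The only mild point to note is that we need $|T|\ge 2$ so that $B$ has at least two cuboids, which holds for all $n\ge 2$.

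Finally I would remark that combining the lemma with Problem \ref{CV}'s definition of $\theta$ shows that the continuous quantity $\theta$ and the discrete exponent $\limsup_n \log G_{3,2}(n)/\log n^3$ (resp.\ the $F$-version) coincide, which is the use we make of it in Section \ref{[32]}; I would defer that corollary to after the construction that establishes Theorem \ref{UB}.
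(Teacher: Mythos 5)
Your converse direction is correct and is the same short argument the paper gives: replace each triple by the open unit cube with far corner at that point, rescale by $1/n$, and check $\|B\|_\theta^\theta=|T|\cdot n^{-3\theta}=1$.

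The forward direction, however, has a genuine gap, and it is exactly at the point you wave away as ``bookkeeping.'' You correctly notice that naively filling integer boxes gives the wrong norm, and that planting the trivial $(rst)^{1/2}$ seed in each box only recovers $\theta=1/2$; and you correctly intuit that some self-similar iteration of $B$ is needed (the paper does iterate, forming $B^k$ with $\|B^k\|_\theta=1$). But the iteration alone does not close the argument. After $k$ iterations the cuboids in $B^k$ have wildly different \emph{shapes}, not just different volumes, and a discrete $2$-comparable configuration of exponent $\alpha$ in $[N]^3$ cannot be placed inside a box of dimensions $r\times s\times t$ unless those dimensions are roughly balanced (recall the trivial ceiling $\min\{rs,rt,st\}$, which can be far below $(rst)^{1/2}$). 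Your ``glue copies of the seed into each $\widehat B_i$ and iterate'' step therefore does not go through: you cannot fit the same seed, at a common scale, into boxes of different aspect ratios, so there is no single $n$ for which the count adds up. The paper's actual mechanism is the missing ingredient: perturb $B$ so all sidelengths are rational with bounded denominator, encode each cuboid in $(B')^k$ by the exponent vector of its prime factorization (the ``sidelength vector''), observe that there are only polynomially many such vectors in $k$, pigeonhole to extract a subcollection $C$ of cuboids \emph{all of the same shape} carrying almost all of the $\|\cdot\|_{\theta-\epsilon}$ mass, place one integer box per $C_i$, and finally cycle the three coordinates and take the triple product (as in Lemma \ref{prod}) to land in a cube $[n]^3$. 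This pigeonhole on shape, together with the verification that the resulting $n=ABC$ is exponentially large in $k$ so that the polynomial loss from the pigeonhole is negligible, is the substance of the proof and is not present in your sketch. Separately, your parenthetical claim that two integer triples in the same box ``are automatically $2$-comparable (they differ in all three coordinates unless equal)'' is false — $(1,2,3)$ and $(2,1,3)$ lie in a common box and are not $2$-comparable — though you abandon that route anyway, so this does not by itself sink the proposal.
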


\begin{proof}
	The converse is easy, since, as already remarked, we can view the collection $T$ as a collection of unit cubes inside $[0,n]^3$, which we can then scale down by a factor of $n$. This gives a collection $B$ of at least two $1/n\times1/n\times1/n$ cuboids, and 
	\[\sum_{B_i}\left(\frac{1}{n^3}\right)^\theta=\frac{|T|}{n^{3\theta}}=1.\]
	
	The other implication is a little more subtle. What we would like to do is take the collection $B$ living inside $[0,1]^3$ and discretize it. To begin with, we would take a fine grid and take all the points in it that live inside $\bigcup B$. Although this does not give us a 2-comparable/2-increasing set, it gives us a set that splits up nicely into a disjoint union of subgrids. We could then hope to take 2-comparable/2-increasing subsets of these subgrids that are as large as possible and put them together. However, this approach runs into difficulties, because the subgrids could be of very different sizes and shapes, which makes it unclear that we can fit long 2-increasing/2-comparable subsets inside all of them simultaneously. (Recall, for instance, the trivial upper bound of $\min\{rs,rt,st\}$, which, if $r,s,t$ are sufficiently unbalanced, will be less than $(rst)^{1/2}$.)
	
	So first we shall ``treat" the collection $B$ so that the cuboids are all of comparable dimensions. This is done as follows.
	
	For any collection of cuboids $B$ we define a sequence $B^1,B^2,\dots$ by setting $B^1=B$ and defining $B^k$ by replacing each $B_i\in B^{k-1}$ by a suitably scaled copy of $B$. Note that we have $|B^k|=|B|^k$ and also 
	\[\sum_{B_i\in B^k}|B_i|^\theta=\sum_{B_i\in B^{k-1}}|B_i|^\theta\big(\sum_{B_j\in B}|B_j|^\theta\big)=\sum_{B_i\in B^{k-1}}|B_i|^\theta.\]
Therefore, by induction we have 
	\[ \sum_{B_i\in B^k}|B_i|^\theta=1\]
	for all $k$.
	
	Now suppose we have a collection $B$ such that $\|B\|_\theta=1$. First we choose a positive integer $m$ and perturb the cuboids in $B$ so that their sidelengths are all multiples of $m^{-1}$. For any $\epsilon>0$ we can choose $m$ and the perturbation in such a way that the peturbed collection $B'$ has $\|B'\|_{\theta-\epsilon}\ge 1$.
	
	Let us fix our $\epsilon>0$ and our choice of $m$. Then let $p_1,\dots,p_r$ be the primes less than or equal to $m$ and define the \emph{sidelength vector} of a cuboid in $B'$ to be the vector $(a_1,\dots,a_r,b_1,\dots,b_r,c_1,\dots,c_r)$, where the sidelengths of the cuboid are $m^{-1}p_1^{a_1}\dots p_r^{a_r}$, $m^{-1}p_1^{b_1}\dots p_r^{b_r}$ and $m^{-1}p_1^{c_1}\dots p_r^{c_r}$.
	
	We extend this definition of a sidelength vector to the cuboids in $(B')^k$ by assigning to a cuboid in $(B')^k$ the vector $(a_1,\dots,a_r,b_1,\dots,b_r,c_1,\dots,c_r)$ where the dimensions of the cuboid are $m^{-k}p_1^{a_1}\dots p_r^{a_r}$, $m^{-k}p_1^{b_1}\dots p_r^{b_r}$ and $m^{-k}p_1^{c_1}\dots p_r^{c_r}$. Having done this, we see that the sidelength vectors of cuboids in $B_p^k$ are just sums of $k$ of the sidelength vectors of cuboids in $B'$.
	
	The total number of sidelength vectors for $(B')^k$ is the size of the $k$-fold iterated sumset of the set of sidelength vectors for $B'$, and these all live in the box $[km]^{3r}$ so their number grows polynomially with $k$. Fix $k$ large, and let $v$ be the sidelength vector such that the sum of all $|B_i|^{\theta-\epsilon}$ such that $B_i\in(B')^k$ has sidelength vector $v$ is maximized.
	
	Let $C=\{C_i\}$ be the subcollection of $(B')^k$ consisting of the cuboids with sidelength vector $v$. Then
	\[\sum_i|C_i|^{\theta-\epsilon}\ge (km)^{-3r}.\]
	All the $C_i$ have the same sidelengths: let these be $d_1$, $d_2$ and $d_3$. Now subdivide each of the three sides of the unit cube into intervals of equal lengths $a, b$ and $c$, with $d_1/4\le a\le d_1/2$, $d_2/4\le b\le d_2/2$ and $d_3/4\le c\le d_3/2$. Let $D$ be a collection of cuboids obtained by selecting, for each $C_i$, precisely one of the $a\times b\times c$ cuboids that is entirely contained within $C_i$: by our choice of $a,b,c$ such a cuboid must exist.
	
	Since $\sum_i|C_i|^{\theta-\epsilon}\ge (km)^{-3r}$, we have that
	\[|C|(d_1d_2d_3)^{\theta-\epsilon}\ge (km)^{-3r},\]
from which it follows that
	\[|D|(abc)^{\theta-\epsilon}\ge (km)^{-3r}/64.\]
	
	We may now obtain a discrete sequence of tuples by scaling up the collection of cuboids $D$ by a factor of $A=1/a$, $B=1/b$ and $C=1/c$ in the three dimensions (note that $A,B,C\in \mathbb{Z})$ so that the cuboids become unit cubes. Let $T$ be the collection of integer tuples that we get by taking the furthest point in (the closure of) each cube from the origin. Then $T$ is a collection of integer tuples lying in $[A]\times[B]\times[C]$, and it is 2-increasing/2-comparable if $B$ is.
	
	We now observe that $ABC$ is exponentially large in $k$. This follows provided that we can show that at least one of $a$, $b$ or $c$ is exponentially small. But to any sidelength vector $v$ we may associate a sequence $S$ of $k$ cuboids from $B'$ such that the sidelengths defined by $v$ are the products of the corresponding sidelengths from $S$. Since $B'$ consists of more than one cuboid, the $1\times 1\times 1$ cuboid is not present, and consequently at least one third of the sidelengths in $S$ are not length 1. Letting $h<1$ be the largest non-unit sidelength of any cuboid in $B'$, we deduce that at least one of $a$, $b$ or $c$ is at most $h^{k/3}$.
	
Since $ABC$ is exponentially large in $k$, by taking $k$ sufficiently large we can ensure that
	\[|T|\ge(ABC)^{\theta-2\epsilon}.\]
	
	All that is now required is to build a collection of tuples from $T$ that live inside a set $[n]^3$ rather than $[A]\times[B]\times[C]$. We saw how to do this at the beginning of Section \ref{2increasingcase}. We let $\phi$ be the map that cycles the coordinates of each tuple round by one place, so $\phi(\{(a,b,c)\})=\{(c,a,b)\}$. Define $T_1=T$, $T_2=\phi(T)$ and $T_3=\phi^2(T)$. Then, using the definition of a product of two sequences given in the proof of Lemma~\ref{prod}, we can take the sequence $S=T_1\otimes T_2\otimes T_3$. $S$ is a set of integer tuples each lying in $[n]^3$ where $n=ABC$, and $|S|=n^{3\theta-6\epsilon}$, and it is 2-increasing/2-comparable if $T$ is.
\end{proof}

This lemma allows us to consider continuous constructions in our search for long 2-comparable sequences. 
It turns out, as we shall demonstrate in Section \ref{long2cs}, that this is quite useful.

There is a clear resemblance between the definition of $\theta$ above and the definition of Hausdorff dimension. It seems almost certain that the correct exponent in the discrete problems is equal to the maximal Hausdorff dimension of a subset of $[0,1]^3$ that is 2-increasing/2-comparable, but we have not attempted to prove this. 

One reason the continuous problem helps is that it allows us to use variational arguments. The next lemma illustrates this. Although it is not strictly necessary for our purposes (we shall make use of it, but will then prove a stronger result without using it), it may be important in future developments. That is because, as we shall see in Section~\ref{conc}, to prove an upper bound of $n^{3/2}$ for the 2-increasing problem, it appears to be necessary to use extremality, and this lemma is almost the only way we have found of doing that.

\begin{lemma} \label{extremal}
Let $B=\{B_i\}$ be a finite collection of disjoint open cuboids lying in $[0,1]^3$ and let $\alpha>0$. For each $i$, let $B_i=X_i\times Y_i\times Z_i$ and let $x_i=|X_i|, y_i=|Y_i|$ and $z_i=|Z_i|$. Given any $t\in[0,1]$, define $f(t)$ to be $\sum_{i:t\in X_i} x_i^{\alpha-1}(y_iz_i)^\alpha$. Then either $f$ is constant for almost every $t$ or there is a continuous piecewise linear bijection $\phi:[0,1]\to[0,1]$ such that if we set $C_i=\{(\phi(x),y,z):(x,y,z)\in B_i\}$ for each $i$ and $C=\{C_i\}$, then $\|C\|_\alpha>\|B\|_\alpha$.
\end{lemma}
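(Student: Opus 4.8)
The plan is to treat $\|B\|_\alpha^\alpha = \sum_i (x_iy_iz_i)^\alpha$ as a function of the $x$-coordinate structure alone, holding the $y$- and $z$-data of each cuboid fixed, and to perform a mass-redistribution in the $x$-direction that can only increase it. Observe first that if $\phi:[0,1]\to[0,1]$ is a piecewise-linear increasing bijection which on the interval $X_i$ has constant slope $\lambda_i$ (this is possible precisely when the collection of left/right endpoints of the $X_i$ cuts $[0,1]$ into pieces on each of which $\phi$ is affine, which we arrange), then $C_i = \phi(X_i)\times Y_i\times Z_i$ has $|\phi(X_i)| = \lambda_i x_i$, and $\sum_i \lambda_i \cdot (\text{length of the relevant sub-pieces}) $ must reproduce the total length $1$. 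So the quantity to be increased is $\Psi(\lambda) = \sum_i (\lambda_i x_i y_i z_i)^\alpha = \sum_i \lambda_i^\alpha (x_iy_iz_i)^\alpha$, subject to a linear constraint coming from the requirement that $\phi$ is a bijection of $[0,1]$. The first step is therefore to set up this constraint precisely: refine $[0,1]$ into the atoms determined by all endpoints of the $X_i$, let the atoms have lengths $\ell_1,\dots,\ell_M$, let the scaled atoms have lengths $\mu_1\ell_1,\dots,\mu_M\ell_M$ with $\sum_j \mu_j\ell_j = 1$, and note that the slope of $\phi$ on atom $j$ is $\mu_j$, so $|\phi(X_i)| = \sum_{j: \text{atom }j\subset X_i} \mu_j\ell_j$.

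The second step is the variational computation. We want to choose $(\mu_j)_{j=1}^M$ with $\mu_j\ge 0$ and $\sum_j \mu_j \ell_j = 1$ maximizing
\[
\Phi(\mu) = \sum_i \Big(\sum_{j:\, j\subset X_i} \mu_j\ell_j\Big)^{\!\alpha} (y_iz_i)^\alpha .
\]
Note $\Phi$ is continuous on the compact simplex $\{\mu\ge 0,\ \sum\mu_j\ell_j=1\}$, so a maximizer $\mu^*$ exists; and the uniform choice $\mu_j\equiv 1$ (i.e.\ $\phi=\mathrm{id}$) is a feasible point with value $\|B\|_\alpha^\alpha$. If the maximum strictly exceeds this value we are done with the second alternative of the lemma. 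So assume the maximum is attained at $\mu^*\equiv 1$. Now compute the partial derivatives of $\Phi$ at $\mu\equiv 1$: writing $g_i := x_i^{\alpha-1}(y_iz_i)^\alpha$ (recall $\sum_{j\subset X_i}\ell_j = x_i$),
\[
\frac{\partial \Phi}{\partial \mu_j}\Big|_{\mu\equiv 1} = \ell_j\,\alpha \sum_{i:\, j\subset X_i} x_i^{\alpha-1}(y_iz_i)^\alpha = \alpha\,\ell_j \, f(t_j),
\]
where $t_j$ is any point of atom $j$ and $f$ is the function in the statement (which is constant on the interior of each atom). The KKT/Lagrange condition for $\mu^*\equiv 1$ to be a maximum, given the constraint $\sum \mu_j\ell_j = 1$ and the fact that at $\mu\equiv 1$ every coordinate is strictly positive (so no inequality constraint is active), is that $\partial\Phi/\partial\mu_j = c\,\ell_j$ for a single constant $c$ and all $j$. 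Hence $f(t_j) = c/\alpha$ for all $j$, i.e.\ $f$ is constant on the union of the open atoms, which is all of $[0,1]$ up to the finitely many endpoints — i.e.\ $f$ is constant almost everywhere. That is the first alternative.

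To turn the "interior critical point" observation into a genuine strict-improvement conclusion when $f$ is \emph{not} a.e.\ constant, I would argue directly rather than invoke a general KKT statement: if $f(t_{j_1}) > f(t_{j_2})$ for two atoms $j_1,j_2$, consider the one-parameter perturbation $\mu_{j_1} = 1 + \varepsilon/\ell_{j_1}$, $\mu_{j_2} = 1 - \varepsilon/\ell_{j_2}$, all other $\mu_j = 1$; this keeps $\sum\mu_j\ell_j = 1$ and keeps $\mu\ge 0$ for small $\varepsilon>0$, and $\frac{d}{d\varepsilon}\Phi\big|_{\varepsilon=0} = \alpha\big(f(t_{j_1}) - f(t_{j_2})\big) > 0$, so for small $\varepsilon>0$ the corresponding $\phi$ (which is a continuous piecewise-linear increasing bijection of $[0,1]$ by construction) gives $\|C\|_\alpha^\alpha > \|B\|_\alpha^\alpha$. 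The only genuinely fiddly point — the main obstacle, such as it is — is bookkeeping: checking that the $\phi$ built from the $\mu_j$ really is a bijection $[0,1]\to[0,1]$ (it is: it is continuous, strictly increasing since all slopes $\mu_j > 0$, and maps $0\mapsto 0$, $1\mapsto 1$ by the normalization $\sum\mu_j\ell_j=1$) and that the cuboids $C_i$ remain disjoint (they do: disjointness in any one coordinate is preserved by applying the monotone map $\phi$ to that coordinate, and since the $B_i$ were disjoint they are separated in at least one coordinate — in fact one must be slightly careful and note that the relevant separating coordinate might be $y$ or $z$, which are untouched, or $x$, in which case the intervals $X_i$ had disjoint interiors and $\phi$ being injective preserves that). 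Everything else is the short derivative computation above. I expect no serious difficulty; the lemma is essentially the statement that a sum of $\alpha$-th powers is maximized, under a "redistribute the $x$-mass" constraint, exactly when a certain first-order balance condition $f\equiv\text{const}$ holds.
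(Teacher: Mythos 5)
Your proof is correct and is essentially the same variational argument as the paper's: both identify that the first-order change in $\|B\|_\alpha^\alpha$ under a compensated local stretch/shrink in the $x$-direction is $\alpha\delta\bigl(f(t)-f(u)\bigr)+o(\delta)$, and then use non-constancy of $f$ to make this positive. The only difference is cosmetic — you set up a global optimization over slopes $\mu_j$ on atoms and invoke KKT before falling back to the same two-atom perturbation, whereas the paper goes directly to the two-point perturbation; your remarks about $\phi$ being a bijection and the $C_i$ staying disjoint are correct and match a remark the paper makes immediately after the lemma.
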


\begin{proof}
If $f$ is not constant almost everywhere, then we can find $t$ and $u$ such that neither $t$ nor $u$ is the end point of any of the intervals $X_i$, and $f(t)\ne f(u)$.

Now choose small intervals $I$ and $J$ about $t$ and $u$ that do not contain the end points of any of the $X_i$ and choose a piecewise linear bijection $\phi:[0,1]\to [0,1]$ that has gradient 1 outside $I\cup J$, increases the length of $I$ by $\delta$, and decreases the length of $J$ by $\delta$. Then $|\phi(X_i)|=|X_i|$ for every $i$ such that $X_i$ contains both $t$ and $u$ or neither $t$ nor $u$. If it contains just $t$ then $|\phi(X_i)|=|X_i|+\delta$ and if it contains just $u$ then $|\phi(X_i)|=|X_i|-\delta$.

Now let us think about how the sum $\sum_i(x_iy_iz_i)^\alpha$ changes when we expand and contract the intervals $X_i$ in this way. The effect of increasing $x_i$ by $\delta$ is to increase the sum by $\alpha x_i^{\alpha-1}y_iz_i\delta+o(\delta)$ and the effect of decreasing it by $\delta$ is to decrease the sum by that amount. Therefore, 
\[\|C\|_\alpha^\alpha-\|B\|_\alpha^\alpha = \alpha\delta(f(t)-f(u))+o(\delta).\]
Since $f(t)\ne f(u)$, we can choose $\delta$ (possibly negative) such that the right-hand side is positive, and the result is proved.
\end{proof}

Note that the map $(x,y,z)\mapsto(\phi(x),y,z)$ preserves all the order relations we are interested in, so if $B$ is 2-increasing or 2-comparable then so is $C$. So the lemma implies that if we have an extremal example in the continuous case, then all its cross sections (apart from those that intersect the boundaries of the cuboids) are of the same ``size'', as measured by the function $f$. Note too that if all the cuboids have the same size and shape, then the lemma implies that all cross sections that do not include a face of one of the cuboids intersect the same number of cuboids.

We believe that this property is also present in the discrete, 2-increasing setting, but we have not been able to prove this. Specifically, if we say that a 2-increasing, discrete sequence of triples is extremal if it is of length $n^\alpha$ where $\alpha$ is the maximal exponent (ie $F(n)=n^{\alpha+o(1)}$), then we conjecture that the following holds. There exists a function $C:\mathbb{N}^3\mapsto \mathbb{N}$ such that if $T$ is an extremal 2-increasing sequence of triples from $[r]\times[s]\times[t]$ then the number of triples in the plane $(\ast,\ast,z)$ is equal to $C(r,s,t)$, the number of triples in the plane $(\ast,y,\ast)$ is equal to $C(s,t,r)$ and the number of triples in the plane $(x,\ast,\ast)$ is equal to $C(t,r,s)$.

%In the discrete, 2-comparable setting it seems likely that there are no extremal sequences (ie sequences $T$ with $|T|=n^\alpha$ where $G(n)=(1+o(1)n^\alpha$). This is because of the apparent link with the Hausdorff dimension. It seems most probable that the best 2-comparable sequences will have a fractal structure that is impossible to perfectly realise in a finite grid, and so no finite sequence will achieve the perfect upper bound $n^\alpha$.

\subsection{Long 2-comparable sequences} \label{long2cs}

We begin with a very short but somewhat abstract argument that there are 2-comparable collections of triples that have length greater than $n^{3/2}$. The argument starts with the following example, given in its grid representation.

\[\young({~3~~4,3~~4~,~~~12,~14~~,1~2~~})\]

This lives in the set $[5]\times[5]\times[4]$ and contains ten triples. Since $10=(5\times 5\times 4)^{1/2}$, this is not yet a suitable example. However, even the tiniest improvement would turn it into an example of what we want, since the number of triples is equal to the bound we are trying to improve. 

This is where looking at the continuous problem helps. We cannot make a ``tiny" improvement to a discrete example, but if we think of this set as a continuous example made out of unit cubes, then Lemma \ref{extremal} implies that we \emph{can} improve it, since the number of cubes in each layer is not constant; labels 1 and 4 appear three times each, while labels 2 and 3 appear only twice each. Then Lemma \ref{equiv} allows us to convert our improved example back into a (much larger) discrete example that exhibits a similar improvement.

Rather than pursuing the above argument in detail, we shall use similar ideas to obtain better bounds and smaller examples. This time our starting point is the following length-five 2-comparable collection of tuples from $[3]^3$:
\[(1,1,1),(1,2,3),(2,3,1),(3,1,2),(3,3,3).\]
In the grid formulation, this is given by
\[\young({~13,3~~,1~2})\]

Interestingly, this example is \emph{not} on the boundary, since $5<3^{3/2}=5.196...$. However, these two numbers are sufficiently close that by optimizing the corresponding continuous example one can still beat the power 3/2, and that gives the best bound we currently know.

Let us therefore convert the example to the continuous variant by viewing it as a union of five $\frac13\times \frac13\times \frac13$ cuboids living inside $[0,1]^3$. We now perform a distortion so that the cuboids have different sizes. Specifically, we shall simultaneously stretch and shrink the cubes by choosing some $x\in(0,1/2)$ and dividing each copy of $[0,1]$ into the three intervals $(0,x), (x, 1-x)$ and $(1-x,x)$. (Symmetry considerations show easily that we are not losing any important flexibility by doing this.) We shall then optimize ${x}$. 

For the resulting collection of cuboids $B$ we have 
\[\|B\|_{1/2}= 2(x^3)^{1/2}+3(x^2(1-2x))^{1/2}\]
which is optimized at $x=(7+\sqrt{5})/22=0.419\dots$ giving
\[\|B\|_{1/2}=\sqrt{\frac{13}{22}+\frac{5 \sqrt5}{22}}=1.048\dots>1.\]
This shows already that $\theta>1/2$, but we can work a little more and obtain a concrete lower bound on $\theta$.

Note first that
\[\|B\|_\alpha = 2x^{3\alpha}+3(x^{2\alpha}(1-2x)^\alpha,\]
so we want to find $\alpha$ as large as possible such that
\[\sup_{x\in(0,1/2)} (2x^{3\alpha}+3x^{2\alpha}(1-2x)^{\alpha})\ge 1.\]
The best we can do here is a numerical calculation, which reveals that the optimal $\alpha$ lies between $0.5154$ and $0.5155$. Therefore $\theta$, the best possible exponent, is greater than $0.5154$.

Applying Lemma~\ref{equiv} we instantly deduce Theorem~\ref{UB}.

It may be of interest to see some small examples of sequences breaking the $n^{3/2}$ bound, since it is not immediately obvious how to extract simple ones from Lemma~\ref{equiv}. We shall now give two, and explain a little how they were constructed.

The process for constructing explicit counterexamples with small $n$ essentially follows the proof of the full upper bound, but we avoid the complexity of Lemma~\ref{equiv} by discretizing the continuous example above in a simple way. We simply subdivide all three dimensions equally and place discrete sequences inside each of the continuous cuboids in the resulting grid. In general this may not work, since we may not be able to fit long sequences inside the cuboids if their shapes are too different. However, a judicious choice of the parameter $x$ in the continuous construction outlined above allows us to keep the cuboid dimensions in a good range.

For example, we can take the above construction but modify it by taking the sub-optimal $x=4/9$. This value of $x$ is chosen because our calculations above showed that we wanted $x>1/3$, and if we take $x$ to be a rational with small denominator we can subdivide coarsely and obtain a discrete sequence that lives inside a small grid. So we subdivide each dimension into 9 sections and scale up by a factor of 9 so that our subdivisions are into unit intervals, and we end up with the following:

\[
\begin{ytableau}
~ &&&& *(yellow)&*(green)&*(green)&*(green)&*(green) \\
&&&& *(yellow)&*(green)&*(green)&*(green)&*(green) \\
 &&&& *(yellow)&*(green)&*(green)&*(green)&*(green) \\
 &&&& *(yellow)&*(green)&*(green)&*(green)&*(green) \\
 *(green)&*(green)&*(green)&*(green)&&&&& \\
 *(yellow) & *(yellow)& *(yellow)& *(yellow)&&*(cyan)&*(cyan)&*(cyan)&*(cyan) \\
 *(yellow) & *(yellow)& *(yellow)& *(yellow)&&*(cyan)&*(cyan)&*(cyan)&*(cyan) \\
 *(yellow) & *(yellow)& *(yellow)& *(yellow)&&*(cyan)&*(cyan)&*(cyan)&*(cyan) \\
 *(yellow) & *(yellow)& *(yellow)& *(yellow)&&*(cyan)&*(cyan)&*(cyan)&*(cyan) \\
\end{ytableau}
\]
where the yellow blocks correspond to cuboids with third dimension $(0,4)$, the blue block corresponds to a cuboid with third dimension $(4,5)$, and the green blocks correspond to cuboids with third dimension $(5,9)$.

In order to convert this into a discrete sequence, we simply need to fill the cuboids with large 2-comparable collections of tuples. For instance, the bottom left cuboid is $(0,4)\times (0,4)\times (0,4)$, so we want to treat it as a $4\times 4$ grid with labels from the set $[4]$. We can fit $4^{3/2}=8$ labels inside here. Similarly we can fit eight labels in the top right green block, and four labels in each of the other three blocks. This gives us a 2-comparable sequence of triples in $[9]^3$ of length 28, shown in Figure~\ref{Fig2} alongside a smaller example of a 2-comparable sequence of tuples that beats the $(rst)^{1/2}$ bound.

\begin{figure*}[t!]
	\centering
	\begin{subfigure}[b]{0.4\textwidth}
		\centering
		\[
		\young({~~~~4~~89,~~~~3~~67,~~~~289~~,~~~~167~~,6789~~~~~,~~34~~~~5,~~12~~~5~,34~~~~5~~,12~~~5~~~})
		\]
		\caption{A length 28 2-comparable sequence of tuples in $[9]^3$, given in grid formulation. Observe that $28=9^{1.516...}>9^{3/2}$.}
	\end{subfigure}%
	~~~~~~~~~
	\begin{subfigure}[b]{0.4\textwidth}
		\centering
		\[\young({~~134,34~~~,~1~~2,1~~2~})\]
		\caption{Another example of a 2-comparable sequence given in grid formulation. Note that the length of the sequence is 9, while $rst=80<9^2$.}
	\end{subfigure}
	\caption{Some long $[3,2]$ sequences.}
	\label{Fig2}
\end{figure*}

An important observation is that all the sequences discussed in this section are disastrously far from being transitive. For example, in the coloured grid above we see that any choice of three labels from the leftmost green block, the rightmost yellow block and the blue block form an intransitive loop. As a result these constructions pose no problems for Conjecture~\ref{IncUB}.

It is also worth remarking that the best constructions above all began from the same starting point; namely the sequence of length 5 presented at the start of the section. We could hope that there are other short sequences to start from which could yield even better constructions. However, all the sequences that we have tried have yielded significantly worse bounds than the one above. This leads us to think that the optimal $\alpha$ that we approximated earlier has a chance of being the correct exponent for the $[3,2]$ problem.

\section{Related conjectures}\label{waffle}

In this section we shall discuss several conjectures, some of them closely related to well-known questions, that would imply power bounds for the $[3,2]$ or $(3,2)$ problems.

\subsection{Weakening the 2-comparability condition} \label{weaker}

We have not yet been able to improve on Loh's upper bound in the $[3,2]$ case, and a power type improvement here is highly desirable. In this subsection we shall consider how far we can weaken the 2-comparability condition and still have some hope of a non-trivial power-type upper bound, since this may help in the search for a proof.

Recall that the grid representation of a $[3,2]$ subset of $[n]^3$ is a subset $G\subset[n]\times[n]$ with its points given labels from $[n]$ in such a way that the following two conditions are satisfied.

	\textbf{Condition 1.} The labels increase along rows and up columns.
	
	\textbf{Condition 2.} Each label occupies a 2-increasing set of points from the grid.

\noindent Can we weaken these conditions without obviously allowing $G$ to have size $n^{2-o(1)}$?

One weakening that goes too far is simply to omit Condition 2. In this case we can label $(a,b)$ with the label $a+b-n/2$ provided $n/2<a+b\le 3n/2$, which allows us to place about $3n^2/4$ labels. 

If Conditions 1 and 2 hold, and a labelled point $P$ is in the same row as a labelled point $Q$ and the same column as a labelled point $Q'$, then $Q$ and $Q'$ must have different labels. That is because otherwise if $Q$ is to the right of $P$ and $Q'$ is above $P$, then Condition 2 is violated, if $Q$ is to the left of $P$ and $Q'$ is above $P$, then Condition 1 is violated, and the other two cases are similar. Let us give a name to this consequence.

	\textbf{Condition 3.} Given any point $x\in G$, no point in the same row as $x$ (excluding $x$ itself) can share a label with a point in the same column as $x$.

Another weakening we might consider is to replace Condition 1 by Condition 3. However, if we associate matchings with the labels in an obvious way, Condition 3 is saying that these matchings are induced, so we can use the standard Behrend example to show that there are labelled sets of size $n^{2-o(1)}$ that satisfy Conditions 2 and 3. Indeed, take a set $A\subset [n]$ of size $n^{1-o(1)}$ that contains no arithmetic progression of length 3, and label the cells $(x,y)$ on the line $x+y=a\in A$ with the label $z=x-y$ provided that $x-y>0$. In this way we label $n^{2-o(1)}$ cells, and it is easy to check that the labelling satisfies the two conditions.

However, a small strengthening of Condition 3 rules out Behrend-type constructions and leaves the possibility of a power bound wide open. We first give a definition.

\begin{definition}\label{CP}
We denote by $S(c)$ the collection of cells from $G$ with label $c$. We call such sets \emph{label sets}. We also write $P(c)$ for the set of cells in $[n]^2$ that share both a row and a column with a cell from $S(c)$. We call $P(c)$ the \emph{completion} of $S(c)$. 
\end{definition}

The reason for the word ``completion" is that $S(c)$ can be thought of as a matching, and $P(c)$ can be thought of as the smallest complete bipartite graph that contains~it.

Condition 3 is equivalent to the statement that for all labels $c$, the cells in the set $P(c)\setminus S(c)$ are all empty.

For our new variant, we replace Condition 3 with the following stronger condition, which we call Condition 4. It states that Condition 3 holds and additionally that there is no cell $x$ in $G$ such that there exist labels $c$ and $d$ with $c$ appearing to the left of $x$ in the same row and $d$ appearing to the right, and $d$ appearing below $x$ in the same column and $c$ appearing above. 

This condition rules out the following configuration appearing in a subgrid of $G$, where asterisks denote cells which may be labelled or empty:
\[\young({~c*,c~d,*d~})\]
A more appealing way to state the condition is as follows. We first extend Definition \ref{CP}.

\begin{definition}\label{CPext}
	Define the \emph{upper completion} of a label set $S(c)$ to be the set $P_1(c)$ of cells from $G$ that have points labelled $c$ both directly below them and directly to the right, and the \emph{lower completion} $P_2(c)$ to be the set of cells from $G$ that have points labelled $c$ both directly above and directly to the left.
\end{definition} 

We have $P(c)=P_1(c)\cup P_2(c)\cup S(c)$, as illustrated in Figure \ref{Fig3}.
\begin{figure}
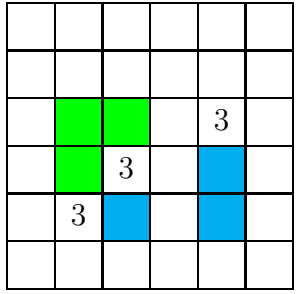

	\[
	\begin{ytableau}
	 ~& &&&& \\
	 &&&&& \\
	& *(green)&*(green)&&$3$& \\
	& *(green)  &$3$&&*(cyan)& \\
	&3  &*(cyan)&&*(cyan)& \\
	&  &&&& ~\\
	\end{ytableau}
	\]
	\caption{The upper and lower completions of a label set as given in Definition \ref{CPext}. $P_1(3)$ is highlighted in green and $P_2(3)$ in blue.}
	\label{Fig3}
\end{figure}

\noindent This gives us the following way of stating the condition.

	\textbf{Condition 4.} Given any two labels $c$ and $d$, the sets $P_1(c)$ and $P_2(d)$ are disjoint.

\begin{problem}\label{weak}
	Let $G\subset[n]\times [n]$ be labelled with points from $[n]$. Suppose that the labelling satisfies Conditions 2 and 4. How many labelled cells can $G$ contain?
\end{problem}

Since Condition 4 is strictly weaker than Condition 1, this is a weakening of the $[3,2]$ problem, so we cannot hope for a power bound as strong as $n^{3/2}$. However, the construction based on Behrend's AP3-free set does not come close to satisfying the two conditions, and a non-trivial power bound seems quite plausible.

\begin{conjecture}
	There exists $\epsilon>0$ such that any labelling satisfying the conditions of Problem \ref{weak} has at most $n^{2-\epsilon}$ labels.
\end{conjecture}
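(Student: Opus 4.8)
The plan is to extract from Conditions~2 and~4 a trio of pairwise disjoint subsets of the grid, one of which is $G$ itself, and then to try to boost the resulting constant-factor saving into a power saving by a density-increment argument. First I would reformulate. Write $S(c)$ for the set of cells labelled $c$. By Condition~2 each $S(c)$ is a staircase, strictly increasing in both coordinates, so no label occurs twice in a row or twice in a column, and $S(c)$ may be regarded as a non-crossing partial matching $M_c$ between the row indices and the column indices, with $\sum_c|M_c|=|G|$. As noted after Definition~\ref{CPext}, for each $c$ the sets $S(c)$, $P_1(c)$, $P_2(c)$ are pairwise disjoint with union $P(c)$; Condition~3 (which Condition~4 implies) says every cell of $P_1(c)\cup P_2(c)$ is unlabelled, so $\mathcal{SE}:=\bigcup_cP_1(c)$ and $\mathcal{NW}:=\bigcup_dP_2(d)$ are both disjoint from $G$, and the extra force of Condition~4 is exactly that $\mathcal{SE}$ and $\mathcal{NW}$ are also disjoint from \emph{each other}. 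Unwinding the definitions, the forbidden configuration is: row indices $s<r<s'$, column indices $q'<p<q$, and labels $c\ne d$ with $\{rq,\,sp\}\subseteq M_c$ and $\{rq',\,s'p\}\subseteq M_d$; equivalently, the (possibly unlabelled) cell $(r,p)$ "sees" the label $c$ to its south-east and the label $d$ to its north-west.

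Already the disjointness of $G$, $\mathcal{SE}$ and $\mathcal{NW}$ yields a constant-factor saving. If $S(c^\ast)$ is a largest label set then $|S(c^\ast)|\ge|G|/n$, and $P_1(c^\ast)$ and $P_2(c^\ast)$ are disjoint sets of size $\binom{|S(c^\ast)|}{2}$ each, both disjoint from $G$, so $|G|+2\binom{|G|/n}{2}\le n^2$ and hence $|G|\le(\tfrac{\sqrt5-1}{2}+o(1))n^2$. The whole problem is to iterate this. The natural mechanism is a density increment: I would try to show that if $G$ occupies a $(1-\delta)$-fraction of the grid, then there is a combinatorially defined sub-grid — say one spanned by a "popular" set of rows and a "popular" set of columns produced by dependent random choice applied to the row--column incidence pattern of $G$ — on which the labelled density strictly increases while Conditions~2 and~4 are inherited. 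Iterating would push the density to $1$ on a non-trivial sub-grid, contradicting the constant-factor bound there. The step at which a genuinely new idea is needed is in making the increment \emph{polynomial} rather than tower-type: a promising substitute for the regularity method is to treat the forbidden configuration, once the two colours are fixed, as a bounded ordered pattern in the $0/1$ matrix $M_c\cup M_d$ and to hope that "every pair of colour classes is mutually compatible" forces $\sum_c|M_c|$ to be small by a K\H{o}v\'ari--S\'os--Tur\'an- or Marcus--Tardos-type count.

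I expect the main obstacle to be precisely the feature that makes this harder than Loh's setting: the cell $(r,p)$ at the centre of the forbidden configuration is \emph{not required to be labelled}, so the configuration lives partly in the completions rather than among the cells of $G$, and one cannot simply feed the $3$-uniform hypergraph of labelled cells into a removal or supersaturation theorem. One is forced to argue about $P_1(c)$ and $P_2(d)$ directly, and these are well controlled only when the label sets are small, whereas the hard regime is when about $n^{1-o(1)}$ labels each have $|S(c)|\approx n^{1/2}$: the individual completions then have size $\approx n$, they overlap heavily \emph{among themselves} (so the union bound on $\mathcal{SE}$ and $\mathcal{NW}$ is very weak), and yet Condition~4 insists that they separate cleanly into the two halves.

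A secondary difficulty is that, unlike for the full $[3,2]$ problem, there is no candidate extremal configuration to aim at: the constructions of Section~\ref{long2cs} are disastrously far from transitive but still far below $n^{2-\epsilon}$, so the increment has to be driven purely by the pigeonhole/compatibility mechanism rather than by approximating a known optimum. For these reasons I would regard even a tiny explicit $\epsilon$ as a substantial result, and obtaining it probably requires either a clever compression of the family $\{M_c\}_c$ or a removal-type statement tailored to the "ghost cell" configuration.
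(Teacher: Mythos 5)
This statement is a \emph{conjecture} in the paper, not a theorem: the paper states it as a natural open problem motivated by the observation that Condition~4 rules out the Behrend-type construction, and it offers no proof. So there is no argument of the authors' to compare against; I can only assess the internal soundness of your plan.

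Your reformulation is correct and the constant-factor saving is a genuine (if modest) observation. You are right that $S(c)$ is a strictly increasing staircase, that $S(c)$, $P_1(c)$, $P_2(c)$ partition $P(c)$, that Condition~3 makes $P_1(c)\cup P_2(c)$ unlabelled, that Condition~4 separates $\mathcal{SE}=\bigcup_cP_1(c)$ from $\mathcal{NW}=\bigcup_dP_2(d)$, and that $P_1(c)\cap P_2(c)=\emptyset$ automatically because two cells of $S(c)$ cannot share a column. For a most popular label $c^\ast$ one has $|S(c^\ast)|\ge|G|/n$, and $P_1(c^\ast)$, $P_2(c^\ast)$ each have exactly $\binom{|S(c^\ast)|}{2}$ cells, pairwise disjoint and disjoint from $G$; writing $|G|=\alpha n^2$ this gives $\alpha+\alpha^2\le 1+o(1)$, i.e. $\alpha\le(\sqrt5-1)/2+o(1)$. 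All of this checks out.

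The gap is in the iteration step, and it is not merely a missing technical idea: the plan as stated has a structural flaw. You propose to restrict to a dense $m\times m$ sub-grid and re-run the constant-factor argument there, iterating a density increment until the density exceeds the golden-ratio threshold. But when you pass to an $m\times m$ sub-grid, the labels still range over $[n]$, so the pigeonhole step only produces a label appearing $\ge|G'|/n$ times, not $\ge|G'|/m$ times, and the completion sets $P_1(c^\ast)$, $P_2(c^\ast)$ have size $\binom{|G'|/n}{2}$, which is negligible compared to $m^2$ once $m\ll n$. Concretely, there are $\sqrt n\times\sqrt n$ sub-grids that are \emph{fully labelled} (e.g. label $(i,j)$ with $(j-1)\lfloor\sqrt n\rfloor+i$, all labels distinct, all conditions satisfied vacuously), so the target contradiction --- density exceeding $(\sqrt5-1)/2$ on a sub-grid --- simply does not exist for small sub-grids. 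Any density-increment argument here would have to keep both the sub-grid \emph{and} the set of labels actually present comparable to $n$, and nothing in a generic dependent-random-choice step guarantees this. The Marcus--Tardos suggestion has a similar mismatch: $M_c\cup M_d$ avoiding a fixed permutation pattern would give $|M_c\cup M_d|=O(n)$, but each $M_c$ already has at most $n$ entries by Condition~2, so this yields no gain over the trivial bound $\sum_c|M_c|\le n^2$. So the proposal correctly isolates a true constant-factor improvement and correctly diagnoses that the hard regime is many labels of size about $\sqrt n$, but the mechanism offered for turning the constant saving into a power saving does not survive passage to sub-grids and would need to be replaced by something that tracks the effective label range as well as the grid dimensions.
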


A somewhat different weakening of the $[3,2]$ problem can be obtained from the following observation. Given a subset $G\subset[n]^2$, we can regard it as a bipartite graph with copies of $[n]$ as its vertex sets. If we now assign labels to $G$, we can think of it as a labelled bipartite graph.

\begin{proposition}
If the labelling of $G$ corresponds to a 2-comparable set of triples, then the labelled bipartite graph just described contains no cycle with a sequence of labels that repeats itself twice. That is, there is no cycle of length $2k$ such that as you go along the edges, the sequence of labels is of the form $c_1c_2\dots c_kc_1c_2\dots c_k$.
\end{proposition}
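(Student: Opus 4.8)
The plan is to argue by contradiction: suppose such a cycle exists, and extract from it a cyclic sequence of triples that cannot be 2-comparable. Recall that a labelled bipartite graph edge corresponds to a triple: the edge between $a$ (on the first copy of $[n]$) and $b$ (on the second copy) carrying label $c$ is the triple $(a,b,c)$. A cycle of length $2k$ visits vertices $a_1,b_1,a_2,b_2,\dots,a_k,b_k$ (alternating between the two sides), and its $2k$ edges, read in order, are
\[(a_1,b_1,c_1),(a_2,b_1,c_2),(a_2,b_2,c_3),\dots\]
Wait — I need to be careful about which edges share which coordinates, so first I would fix notation cleanly: label the $2k$ edges $e_1,\dots,e_{2k}$ around the cycle, with $e_{2j-1}$ and $e_{2j}$ sharing a vertex on one side and $e_{2j}, e_{2j+1}$ sharing a vertex on the other. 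The label hypothesis says $e_j$ and $e_{j+k}$ carry the same label for all $j$ (indices mod $2k$). The key structural point is that consecutive edges in the cycle share exactly one coordinate (a vertex), hence the corresponding triples share exactly one coordinate, so by 2-comparability they must be strictly ordered in both other coordinates.

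The main step is then to track how the ``free'' coordinates must move as one walks around the cycle. Consider the triple $e_j = (a,b,c_j)$ and $e_{j+k} = (a',b',c_{j+k})$ with $c_j = c_{j+k}$. By Condition 2 (the label-set condition), two cells with the same label form a 2-increasing pair of points, so either $a<a'$ and $b<b'$, or $a>a'$ and $b>b'$; in particular the ``displacement'' from $e_j$ to $e_{j+k}$ in the $(a,b)$-plane is monotone in both coordinates. I would quantify this: assign to each edge $e_j$ a sign recording, say, whether $a_j < a_{j+k}$ (equivalently $b_j < b_{j+k}$). Summing the monotone displacements around the full cycle must give zero, since after $2k$ steps we return to the starting edge. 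But the 2-comparability of consecutive edges forces a rigidity on how the third coordinate can change relative to the first two — exactly as in the proof of Proposition~\ref{PS}, where a triangle of triples $(x,b,c),(a,y,c),(a,b,z)$ was shown to be impossible by chasing inequalities $x<a \Rightarrow b<y \Rightarrow c>z \Rightarrow x>a$. The repeated-label cycle is a longer version of the same obstruction: the label-sharing edges $e_j, e_{j+k}$ play the role of the ``pinned'' coordinate, and walking the half-cycle from $e_j$ to $e_{j+k}$ through 2-comparable consecutive triples forces the displacement in the first two coordinates to have a fixed sign determined by $c_{j-1}$ versus $c_j$, which when combined with Condition 2 around both halves yields $a_j < a_{j+k}$ and $a_j > a_{j+k}$ simultaneously.

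Concretely, I would set up the following: choose a half of the cycle, $e_1,\dots,e_{k+1}$, and show by induction along it (using 2-comparability of consecutive edges and Condition 1, i.e. labels increase along rows and up columns) that the sign of the change in the shared vertex coordinates is controlled by the relation between the labels $c_1$ and $c_{k+1} = c_1$; this gives one inequality, and applying the same argument to the complementary half $e_{k+1},\dots,e_{2k},e_1$ gives the reverse inequality, a contradiction. The bookkeeping is a straightforward but slightly fiddly parity/orientation chase: one must be consistent about whether a given vertex of the cycle lies on the first or second copy of $[n]$, and correspondingly whether two adjacent triples share a first coordinate or a second coordinate.

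\textbf{Main obstacle.} The real work is purely combinatorial bookkeeping: keeping the orientation of the walk, the alternation of the two sides of the bipartite graph, and the direction of each forced inequality all consistent, so that the cumulative effect around the cycle is genuinely contradictory rather than vacuously consistent. There is no deep idea beyond the inequality-chasing already used for Proposition~\ref{PS}; the difficulty is purely that the cycle can be long and one must show the obstruction propagates all the way around. I expect that once the sign conventions are pinned down, the contradiction drops out in one or two lines, but choosing those conventions so the argument is clean is the delicate part.
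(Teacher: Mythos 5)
You have correctly identified the two ingredients the proof must combine --- Condition~2 forces the label-sharing pair $e_j,e_{j+k}$ to be consistently ordered in both position coordinates (your ``sign''), and 2-comparability forces consecutive edges of the cycle to be strictly ordered in the two non-shared coordinates --- but the mechanism you propose for combining them does not go through. The ``sum of displacements around the cycle is zero'' intuition has no clean meaning here: the displacements you define connect $e_j$ to $e_{j+k}$ (the label-pairing structure), whereas the cycle structure links $e_j$ to $e_{j+1}$, and these two structures do not telescope around the cycle. Likewise, the proposed induction along a half-cycle from $e_1$ to $e_{k+1}$ is not going to yield a fixed sign for anything: walking along consecutive 2-comparable triples, the positions and labels are free to oscillate, and the claim that the final inequality is ``controlled by the relation between $c_1$ and $c_{k+1}=c_1$'' is ill-posed because those labels are equal.

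The missing idea is much more local. Call $e_j$ a \emph{left edge} if its two vertices are both smaller than those of $e_{j+k}$ (your sign being $+$), and a \emph{right edge} otherwise. Crucially, $e_j$ is a left edge if and only if $e_{j+k}$ is a right edge, so exactly $k$ edges of the cycle are of each type; by a one-dimensional pigeonhole argument there is some $i$ with $e_i$ a right edge and $e_{i+1}$ a left edge. Now one analyses this single transition. The two edges $e_i,e_{i+1}$ share a vertex $y$ on one side of the bipartition, while $e_{i+k},e_{i+k+1}$ share a vertex on a side that depends on the parity of $k$ --- the parity case split you anticipated but did not resolve. If $k$ is even the shared vertex $w$ is on the same side as $y$, and the right/left conditions force $y>w$ and $y<w$ simultaneously. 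If $k$ is odd they share a vertex $z$ on the other side, and chasing the four inequalities through Condition~1 gives $c_i>c_{i+1}$ from one pair and $c_i<c_{i+1}$ from the other. In both cases the contradiction is immediate and purely local; no propagation of inequalities along the cycle is needed.
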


\begin{proof}
Suppose that a repeating cycle of this kind exists. In this bipartite-graphs formulation, Condition 2 says that no two edges with the same label can share a vertex or cross each other (if we imagine that the vertices are arranged in increasing order in two parallel rows). For each edge in the cycle, call it a \emph{left edge} if it occurs to the left of its opposite counterpart (more formally, the vertices connected by the edge $e_i$ are smaller than the vertices connected by the edge $e_{i+k}$, where addition is mod $2k$), and otherwise a \emph{right edge}. 

There must be some $i$ such that $e_i$ is a right edge and $e_{i+1}$ is a left edge. Without loss of generality $e_i$ is the edge $xy$ and let $e_{i+1}$ be the edge $x'y$. Then if $k$ is even the edges $e_{i+k}$ and $e_{i+k+1}$ take the form $zw$ and $z'w$, where $w$ is both smaller than $y$ and greater than $y$, a trivial contradiction. If $k$ is odd, then they take the form $zw$ and $zw'$. This time our assumptions give us that $x>z$, $x'<z$, $y>w$ and $y<w'$. The first two inequalities imply that $c_i>c_{i+1}$, by Condition 1, and the third and fourth imply that $c_i<c_{i+1}$, again giving a contradiction.
\end{proof}

Call a cycle of the kind discussed in the proposition above a \emph{repeating cycle}.

\begin{problem}
Let $G$ be a bipartite graph with two vertex sets of size $n$ and suppose that its edges can be labelled with $n$ labels in such a way that there are no repeating cycles. How many edges can $G$ have? In particular, is there an upper bound of $O(n^{2-\epsilon})$ for some positive $\epsilon$?
\end{problem}

\noindent Note that the problem above does not say anything about orderings on the vertex sets or the set of labels, so it is a weakening to a more ``purely combinatorial" problem. As the proposition shows, a positive answer to the last question would give a non-trivial power bound for the $[3,2]$ problem.

\subsection{Connections to extremal problems for hypergraphs}

In this section we give one last perspective on the $[3,2]$ problem, and give a connection to widely studied  problems about hypergraphs, as well as to a well-known problem of Ruzsa \cite{Ruzsa}.

Let $G$ be a tripartite, 3-uniform, linear hypergraph with vertex sets $X=Y=Z=[n]$. 

\begin{definition}
	We say that $G$ is $(u,v)$-free if there is no collection of $v$ edges spanned by at most $u$ vertices.
\end{definition}

\begin{problem}\label{Turan}
	What is the maximal size of $G$ if it is $(u,v)$-free? In particular, when can we beat the trivial bound of $\Omega(n^2)$?
\end{problem}

This problem (and its generalization to $r$-uniform hypergraphs) has been studied by a large number of people, beginning with Brown, Erd\H os and S\' os~\cite{BES} who proved that $(u,u-2)$-free hypergraphs could contain $\Omega(n^2)$ edges. The well-known ``$(6,3)$ theorem" of Ruzsa and Szemer\'edi~\cite{RS} was the next breakthrough, proving that $(6,3)$-free hypergraphs can contain at most $o(n^2)$ edges, and the Behrend construction~\cite{Behrend} showed that this is almost tight in the sense that there are $(6,3)$-free hypergraphs containing $n^{2-o(1)}$ edges.

The following conjecture of Brown, Erd\H os and S\'os has been open since 1971.

\begin{conjecture}\label{BES}
	If $G$ is $(u,u-3)$-free then it contains $o(n^2)$ edges.
\end{conjecture}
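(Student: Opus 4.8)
Conjecture~\ref{BES} is the Brown--Erd\H os--S\'os conjecture, which has been open since 1971, so what follows is a description of the approach one would take rather than a completed proof, together with an explanation of why that approach has so far resisted completion.

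The plan is an induction on $u$, with the case $u=6$ supplied by the Ruzsa--Szemer\'edi $(6,3)$ theorem. That base case is proved exactly as in the proof of Proposition~\ref{PS}: a linear triple system on $[n]$ is encoded as a tripartite graph in which each triple becomes a triangle, these triangles are pairwise edge-disjoint, and a graph with $\Omega(n^2)$ edge-disjoint triangles contains many triangles, contradicting the triangle removal lemma, so a $(6,3)$-free system has $o(n^2)$ edges. For the inductive step one would take a linear $(u,u-3)$-free system $G$ with $|E(G)|\ge\epsilon n^2$ and try to locate a forbidden configuration inside it. The key structural point is that, by linearity, any $u-3$ edges spanning at most $u$ vertices in fact span exactly $u-1$ or $u$ vertices and look like one of a bounded, explicitly listable family of ``glued'' linear $3$-graphs; the minimal such configurations, namely those containing no $(u',u'-3)$ configuration for any $6\le u'<u$, form a short list $H_7,\dots,H_u$, and proving the conjecture for $u$ amounts to proving a removal lemma for this list with error term $o(n^2)$. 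First I would try to obtain this from hypergraph regularity applied after the cases $u'<u$ have been used to ``clean up'' $G$: one would like to argue that a $(u,u-3)$-free $G$ with $\Omega(n^2)$ edges has its small, $(6,3)$-type sub-configurations so spread out (they cannot chain together using few new vertices at each step) that a regularity partition must contain a dense cluster in which $H_u$ can be assembled one edge at a time.

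The hard part --- and the reason the conjecture remains open --- is getting the removal machinery to deliver the correct $o(n^2)$ error. The hypergraph removal lemma removes copies of a fixed $3$-graph $H_u$ only at the cost of deleting $o(n^3)$ triples, which is useless when $G$ has only $O(n^2)$ of them; the $(6,3)$ argument evades this solely because the triple system can be re-encoded as a \emph{graph} with edge-disjoint triangles, so that scarcity of copies bites at the $o(n^2)$ scale. For $u\ge 7$ no faithful encoding of the glued configurations $H_u$ into a graph-removal problem is known, and one would instead have to run a regularity argument directly on the $3$-graph while keeping every error at the level of \emph{edges} rather than \emph{triples}. Moreover the Behrend construction shows that, if the conjecture holds, the truth is $n^{2-o(1)}$, so no polynomial saving is available and hence no purely combinatorial counting argument can work: any proof must be genuinely removal-lemma-flavoured. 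Engineering such an $o(n^2)$-removal statement for the configurations $H_u$ is precisely the obstacle; the strongest results to date (S\'ark\"ozy--Selkow, with later improvements) only establish $o(n^2)$ for $(u,v)$-free systems when $v$ is smaller than $u-3$ by a slowly growing function of $u$, and even the first genuinely new case $u=7$, $v=4$ remains open.
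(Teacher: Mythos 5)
You have correctly recognised that Conjecture~\ref{BES} is the Brown--Erd\H os--S\'os conjecture and is presented in the paper as an \emph{open} problem, not a theorem: the paper offers no proof, only the historical framing (linear, $(u,u-2)$-free systems can have $\Omega(n^2)$ edges by Brown--Erd\H os--S\'os; the $(6,3)$ case is Ruzsa--Szemer\'edi; S\'ark\"ozy--Selkow gives $o(n^2)$ for $(v+2+\lfloor\log_2 v\rfloor, v)$-free systems). There is therefore nothing in the paper against which to check a proof, and your refusal to fabricate one is the right call. Your surrounding commentary is accurate and matches the paper's own remarks: the $(6,3)$ argument genuinely relies on re-encoding a linear triple system as a graph with edge-disjoint triangles so that the removal lemma operates at the $o(n^2)$ scale, the Behrend construction rules out any polynomial saving, direct hypergraph removal only deletes $o(n^3)$ triples and hence gives nothing here, and the known partial progress (S\'ark\"ozy--Selkow and its refinements) falls short of $v = u-3$, so even $u=7$, $v=4$ is open. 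In short, both you and the paper treat the statement as a conjecture; your discussion of why it resists proof is sound.
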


The next result shows how these questions are related to our problem.

\begin{proposition}\label{connect}
	Given a collection of triples $T$, regard it as a tripartite 3-uniform hypergraph $G(T)$ in the obvious way. If $T$ forms a 2-increasing sequence then $G(T)$ is $(9,5)$-free, and if $T$ is a 2-comparable set then $G(T)$ is $(10,6)$-free.
\end{proposition}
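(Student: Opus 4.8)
The plan is to argue by contradiction in each case, exploiting the fact that "few vertices spanning many edges" forces repeated coordinates among the triples, and then using 2-comparability (and, in the first case, acyclicity) to derive an impossible chain of inequalities. First I would set up the bookkeeping: if a tripartite 3-uniform linear hypergraph has $v$ edges spanned by $u$ vertices, and the edges are distributed with $u_X$ vertices in $X$, $u_Y$ in $Y$, $u_Z$ in $Z$ (so $u_X+u_Y+u_Z\le u$), then the $v$ triples $(a_i,b_i,c_i)$ use at most $u_X$ distinct first coordinates, etc. The key elementary observation is that if $v$ triples use $u_X+u_Y+u_Z$ coordinate values in total and $v$ is close to this sum, then many pairs of triples must agree in a coordinate; more precisely, a linear hypergraph means no two edges share two vertices, so the triples are distinct as pairs in each of the three projections, but they can still collide in a single coordinate.

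For the 2-comparable case I would show that a $(10,6)$-configuration is impossible, i.e. $6$ triples cannot be spanned by $10$ vertices. The counting: $6$ triples, $18$ vertex-slots, spanned by at most $10$ vertices means at least $8$ "coincidences". Since the projection to each pair of coordinates is injective (linearity), within any single coordinate the $6$ values cannot all be equal to too few things without creating a shared edge-pair; a short case analysis on how $8$ coincidences distribute across the three coordinate classes shows we are forced into a situation with three triples of the shape $(x,b,c),(a,y,c),(a,b,z)$ — exactly the configuration ruled out in the proof of Proposition~\ref{PS}, where 2-comparability gives the contradiction $x<a\Rightarrow b<y\Rightarrow c>z\Rightarrow x>a$. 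The point of the numbers $10$ and $6$ is that $6$ edges on $10$ vertices is the smallest density at which this forced-triangle phenomenon must occur; with only $5$ edges on $10$ vertices the construction in Proposition~\ref{triv2} (with $r=3,s=2$, giving $[3,2]$-sequences of size $n^{3/2}$, equivalently bipartite incidence patterns) shows one can avoid it, which is why the bound is tight at $(10,6)$ and not better.

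For the 2-increasing case the claim is stronger, $(9,5)$-free, because acyclicity forbids not just the degenerate "three triples sharing coordinates pairwise" configuration but also genuine directed $3$-cycles among triples that share one coordinate each. So here I would count: $5$ triples spanned by $9$ vertices means at least $6$ coincidences among $15$ slots. Either we land again in the $(x,b,c),(a,y,c),(a,b,z)$ situation (handled by 2-comparability as above), or the coincidences are spread so that we get three triples of the form $(a,y,w),(x,b,w)$-type patterns sharing coordinates cyclically — and then the relations $<_2$ among them, together with the sharing pattern, produce a directed $3$-cycle, contradicting acyclicity exactly as in the proof of Lemma~\ref{key}. I would organize this as a finite case check on the $6$ coincidences.

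The main obstacle I expect is the case analysis on how the coincidences distribute across the three coordinate classes: one must be careful that "linear" (no two edges sharing two vertices) is used correctly to rule out degenerate distributions, and that every surviving distribution really does force one of the two forbidden sub-configurations rather than something harmless. Getting the exact thresholds — verifying that $5$ edges on $9$ vertices (resp. $6$ on $10$) is genuinely the minimum forcing the obstruction, so that the stated parameters are sharp and not merely sufficient — is where the bookkeeping is most delicate; everything after the configuration is identified is just the two short inequality-chase arguments already given earlier in the paper.
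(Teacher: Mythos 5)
Your high-level strategy — reduce to a finite verification that a small number of triples cannot be crammed onto a small vertex set, and drive a contradiction from the forbidden configurations — is the same as the paper's, which reformulates the claim as $F(r,s,t)\le 4$ for $r+s+t\le 9$ and $G(r,s,t)\le 5$ for $r+s+t\le 10$ and checks the boxes $(2,3,4),(1,4,4),\dots,(3,3,3)$ (resp.\ $(2,3,5),(2,4,4),(3,3,4)$) one by one using the grid representation. Your ``coincidence'' bookkeeping ($3v-u$) is just another way of organizing the same finite search, so the framing is fine.

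The gap is in your claim about which configurations the case analysis lands you in. You assert that six triples on at most ten vertices forces three triples of the shape $(x,b,c),(a,y,c),(a,b,z)$ — that is, a $(6,3)$-triangle, the configuration handled by the $2$-comparability chase from Proposition~\ref{PS}. This is not true. For example, the six triples
\[
a_1b_1c_1,\ a_1b_2c_2,\ a_2b_1c_3,\ a_2b_3c_4,\ a_3b_2c_3,\ a_3b_3c_1
\]
live on exactly $3+3+4=10$ vertices, form a linear hypergraph, and contain no such triangle (one can check that no three of them pairwise share a vertex). So ``linear plus triangle-free'' does not rule out a $(10,6)$-configuration, and your proposed short case analysis cannot terminate only by exhibiting a triangle. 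What actually breaks this example under $2$-comparability is a different, weaker-looking constraint: if two triples share one coordinate then they must be ordered consistently in the other two, i.e.\ in grid representation the labels must strictly increase along rows and up columns. Chasing this two-triple monotonicity around the sharing cycle $e_1\to e_2\to e_5\to e_3\to e_1$ (and $e_3\to e_4\to e_6\to e_1$) yields an inconsistent chain $c_1<c_2<\dots<c_1$, not a triangle. The paper's proof uses exactly this: in the $G(3,3,4)$ and $F(3,3,3)$ cases the ``yellow cell'' is excluded by row/column monotonicity, the ``red cells'' by the triangle argument (Condition 3), and — only in the $2$-increasing case — the ``green cell'' by transitivity. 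Your sketch silently drops the monotonicity constraint, and as the example above shows, without it the argument does not close. The same omission affects your $(9,5)$ sketch: the $F(3,3,3)\le 4$ check genuinely needs all three types of exclusion, not just the triangle and the directed $3$-cycle. So while the architecture of your proposal matches the paper, the specific claim that the forced obstruction is always a triangle (or a triangle/directed cycle) is false, and the finite verification needs to be redone with the full set of consequences of $2$-comparability in hand.
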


\begin{proof}
	Define $F(r,s,t)$ (respectively $G(r,s,t)$) to be the maximum length of a 2-increasing (respectively 2-comparable) sequence of triples in $[r]\times[s]\times[t]$. In order to prove the proposition, we need to show that $F(r,s,t)\le 4$ whenever $r+s+t\le 9$ and $G(r,s,t)\le 5$ whenever $r+s+t\le 10$.
	
	To show that $F(2,3,4)\leq 4$ (and even that $G(2,3,4)\leq 4$), note that if two triples $(a,b,c)$ and $(a',b',c)$ share a third coordinate $c$, then there can be no triples beginning $(a,b')$ or $(a',b)$, which implies that there are at most four triples (since no two triples can share two coordinates). But if all the triples have distinct third coordinates then again there are at most four triples.

	Furthermore, $F(1,4,4)$ is bounded by $1\times 4=4$ trivially, and similarly for $F(1,3,5)$ and $F(2,2,5)$ (and again the same bounds hold for $G$). So to prove the first statement it remains to bound $F(3,3,3)$.
	
	This is a little more difficult. If any coordinate takes the same value three times, then there can be at most three triples, since the other two coordinates must be 11, 22 and 33, which between them rule out all other possibilities for those two coordinates. If in some coordinate at most one value occurs twice, then trivially there are at most four triples. 
	
	So we may assume that in each coordinate two values occur twice. In the grid representation, we are labelling points in $[3]^2$ with labels from $[3]$, and we may assume that two labels appear twice. Since the Cartesian products associated with these two label sets are $2\times 2$ subgrids of the $3\times 3$ grid, they must intersect in a cell. So up to symmetry we have the following configuration:
	
		\[
		\begin{ytableau}
		*(red)&a&b\\
		b&*(yellow)&*(red)\\
		a&*(red)&*(green)\\
		\end{ytableau}
		\]
\noindent where the cells in red cannot be filled as they are part of a Cartesian product associated with a label set. The cell in yellow cannot be filled since $b>a$ and labels must increase up columns and along rows. Finally, we see that the cell in green cannot be filled without violating transitivity.
	
	In order to bound $G(r,s,t)$ by 5 for $r+s+t\le 10$ we need to consider the $(r,s,t)$ combinations $(2,3,5),(2,4,4)$ and $(3,3,4)$, since if $1=r\leq s\leq t$ then the trivial bound of $rs$ suffices. The first case is easy, since if we label all six points of the grid $[2]\times [3]$, then all the labels have to be distinct, which they cannot be if we have only five lables. For the second case, we consider labelling points in $[4]\times[4]$ with labels from $[2]$; if a label is used four times then we cannot fill in any more points, and if each label is used three times then we get two associated $3\times 3$ Cartesian products, which must intersect in a $2\times 2$ subgrid. But any $2\times 2$ subgrid of the Cartesian product associated with one of the label sets actually contains a labelled point, which contradicts Condition 3 of Subsection \ref{weaker}. 
	
	So it remains to check $G(3,3,4)$. Here it is easiest to imagine labelling $[3]\times [3]$ from $[4]$. If at most one label is used twice we are done, and if any label is used three times we are done. So once again we may assume that two labels are used twice, and we once again arrive at the configuration
		\[
		\begin{ytableau}
		*(red)&a&b\\
		b&*(yellow)&*(red)\\
		a&*(red)&\\
		\end{ytableau}
		\]
	
\noindent where the red and yellow cells are unlabelled for the same reason as before. So there can be at most one more label, and $G(3,3,4)=5$ as desired.
\end{proof}

Of course we are interested in a power bound, but because both the $(9,5)$ and $(10,6)$ cases of Problem \ref{Turan} are imposing stronger conditions than those in Conjecture \ref{BES}, it is reasonable to hope that such a bound could hold. Indeed, if Conjecture~\ref{BES} is true then it would seem highly likely that a stronger bound should be possible in the $(u,u-4)$ cases. 

\begin{conjecture} \label{powerbound}
For every $u$ there exists $\epsilon>0$ such that a $(u,u-4)$-free hypergraph with $n$ vertices has at most $O(n^{2-\epsilon})$ edges.
\end{conjecture}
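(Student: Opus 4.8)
The plan is an induction on $u$ organised around the link structure of the hypergraph, but I should say at once that this is a genuinely open problem, and any argument along these lines must get past the obstacle that keeps Conjecture~\ref{BES} open: the only route to $o(n^2)$ that I can see runs through hypergraph regularity, which is quantitatively worthless, so obtaining a \emph{power} saving requires a genuinely different idea. What follows is a plan of attack, not a proof.

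First I would do the standard cleanup. If $G$ is a $3$-uniform $(u,u-4)$-free hypergraph on $[n]$ with at least $n^{2-\epsilon}$ edges, iteratively delete every vertex lying in fewer than $c\,n^{1-\epsilon}$ edges; this removes fewer than $c\,n^{2-\epsilon}$ edges in total, so for $c<\tfrac12$ at least half the edges survive and we may assume every vertex has degree at least $c\,n^{1-\epsilon}$. For $n$ large this exceeds $u$, and then the forbidden configuration bites on each link $L_v$: since $u-4$ edges through $v$ span at most $2(u-4)+1$ vertices, if $v$ lies in two edges that meet in a further vertex we can pad with $u-6$ more edges through $v$ to obtain a configuration on at most $2u-8$ vertices, which is forbidden once $u\le 8$. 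So for $u\le 8$ the cleaned-up hypergraph is linear (for $u\le 7$ every degree is in fact at most $2$ and $m=O(n)$ outright; disposing of any non-linear part when $u\ge 9$ needs separate work). In every case each link $L_v$ contains no $u-4$ edges spanning at most $u-1$ vertices -- it is ``$(u-1,u-4)$-free'' as a graph.

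The trap is to argue from this local fact alone: a link can carry $\Theta(n)$ edges even under that constraint (a perfect matching is $(u-1,u-4)$-free for every $u\ge 8$), so $m\le\tfrac13\sum_v|L_v|$ never improves on $n^2$ this way, and the argument has to be global. The statement I would try to prove, by induction on $u$, is a ``graph $(6,3)$-type'' bound: a system of $n$ pairwise edge-disjoint graphs $L_1,\dots,L_n$ on a common vertex set $[n]$, subject to the joint constraint forced by $(u,u-4)$-freeness -- no $u-4$ edges drawn from one $L_i$ or spread across several of them, together with the associated vertices $v_i$, form a configuration on at most $u$ vertices -- must satisfy $\sum_i|L_i|=O(n^{2-\epsilon(u)})$. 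The base cases are $u\le 7$ (trivial) and $u=8$ (links of large degree are matchings, which should yield to a direct additive argument exploiting that the Ruzsa--Szemer\'edi hypergraph extremal for the $(6,3)$ problem is not $(8,4)$-free); the inductive step would replace $G$ by the hypergraph supported on a single link and appeal to the case $u-1$ or $u-2$, tracking how the exponent degrades so that $\epsilon(u)\to 0$ as the conjecture permits. It is tempting to imitate instead the quotient-plus-induction scheme that proves Theorem~\ref{PBv2}, but the obstacle there is that, unlike the acyclic property, $(u,u-4)$-freeness is \emph{not} preserved when one quotients the vertex set into blocks (few edges on few blocks can lift to edges on many vertices), so that machinery does not port over directly; finding the right quotient-stable surrogate for the $(u,u-4)$-free condition is itself part of the problem.

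The main obstacle is exactly the gap between $o(n^2)$ and a genuine power. For $u\ge 8$ even $f^{(3)}(n;u,u-4)=o(n^2)$ is not known -- it sits between the Ruzsa--Szemer\'edi $(6,3)$-theorem and Conjecture~\ref{BES} -- and the only proof of it one can currently imagine gives nothing quantitative. The sole structural handle is that moving from $u-3$ to $u-4$ forbidden edges appears to kill every Behrend-type lower-bound construction: the hypergraphs realising $n^{2-o(1)}$ in the $(u,u-3)$ case are all built by feeding a $3$-AP-free (or corner-free) set into the standard construction, and on the now-smaller vertex set this produces exactly the extra (loose-path/loose-cycle type) configuration that a $(u,u-4)$-free hypergraph is not allowed to contain. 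So a power bound is not obstructed, and what one really wants is a ``polynomial removal lemma'' valid precisely in the regime where the Behrend obstruction has been excluded; producing such a thing is where the real work lies. A more modest, and perhaps more tractable, first target -- already new, and by Proposition~\ref{connect} yielding a genuine power bound for the $[3,2]$ problem -- is the single case $u=10$, attacked directly through the grid/label reformulation of Section~\ref{weaker}, where the orderings on rows, columns and labels supply extra rigidity that might substitute for the missing general method.
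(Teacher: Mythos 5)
This statement is labelled a \emph{conjecture} in the paper, and the paper contains no proof of it: it is offered as a strengthening of the Brown--Erd\H os--S\'os Conjecture~\ref{BES}, motivated by the heuristic that passing from $(u,u-3)$-free to $(u,u-4)$-free kills the Behrend-type constructions that show the $(u,u-3)$ problem has no power saving. You are therefore right to present your argument as a plan rather than a proof, and your diagnosis of the central difficulty matches the paper's: even $o(n^2)$ is only known here via Szemer\'edi-regularity-type arguments (S\'ark\"ozy--Selkow, Theorem~\ref{SS}), which give nothing remotely like a power bound, and no mechanism is currently known for converting the absence of a Behrend obstruction into a quantitative improvement. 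Your observations en route are sound as far as they go: the degree cleanup, the fact that $(u,u-4)$-freeness forces linearity after cleanup when $u\le 8$, that each link is a $(u-1,u-4)$-free \emph{graph}, the warning that a matching already shows this local condition is far too weak on its own, and the accurate remark that the quotient-and-induct scheme behind Theorem~\ref{PBv2} does not port because quotienting destroys $(u,u-4)$-freeness while it preserves acyclicity. Your suggestion to target $u=10$ first via Proposition~\ref{connect} and the grid reformulation is also the most concrete and promising direction the paper itself points at.

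Where the proposal is vaguest is the inductive step. ``Replace $G$ by the hypergraph supported on a single link and appeal to the case $u-1$ or $u-2$'' is not well defined: a link of a $3$-uniform hypergraph is a graph, not a $3$-uniform hypergraph, and it is not clear what hypergraph you intend to induct on, nor how a bound on a single link (which, as you note, can have $\Theta(n)$ edges) would aggregate into a power saving over all $n$ links. The ``graph $(6,3)$-type'' statement you float for the family of links is closer to something one could try to prove, but you have not isolated which joint configurations across different $L_i$ are actually forbidden, and the $u=8$ base case you describe (matchings in the links, an ``additive argument'' exploiting that the Ruzsa--Szemer\'edi extremal example is not $(8,4)$-free) is a wish rather than an argument; indeed the paper observes that the $(8,4)$ case \emph{is} known, via Loh's Theorem 1.2, with an $n^{3/2}$ bound, which is a better starting point than a hoped-for additive argument. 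In short: the plan correctly maps the terrain and the obstacles, does not misstate anything the paper contains, and does not claim more than it delivers -- but it does not close the gap, and the gap is precisely the open part of the conjecture.
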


As is also the case with the weakenings discussed at the end of the previous subsection, Proposition \ref{connect} loses some of the strength of the 2-increasing and 2-comparable conditions, so it is quite possible that Conjecture \ref{powerbound} is false, but that a non-trivial power bound still holds for the $[3,2]$ problem.

The strongest known result in the direction of Conjecture \ref{BES} is the following theorem of S\'ark\" ozy and Selkow~\cite{SS}, again stated only in the 3-uniform setting.
\begin{theorem}\label{SS}
	If $G$ is $(v+2+\lfloor \log_2 v\rfloor,v)$-free then $G$ contains $o(n^2)$ edges. In particular, if $G$ is $(8,4)$-free, $(9,5)$-free or $(10,6)$-free then we have an upper bound of $o(n^2)$.
\end{theorem}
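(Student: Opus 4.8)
The plan is to argue the contrapositive: for every $\epsilon>0$ there is $n_0$ such that every $3$-uniform hypergraph $G$ on $n\ge n_0$ vertices with at least $\epsilon n^2$ edges contains $v$ edges spanning at most $f(v):=v+2+\lfloor\log_2 v\rfloor$ vertices; the three highlighted cases are $v=4,5,6$, for which $f(v)=8,9,10$. The first move is to reduce to \emph{linear} hypergraphs. If some pair of vertices lies in $v$ edges, those $v$ edges span only $v+2\le f(v)$ vertices and we are done, so every pair lies in at most $v-1$ edges; the graph on $E(G)$ in which two hyperedges are adjacent when they share a vertex-pair then has maximum degree at most $3(v-2)$, hence an independent set --- a linear subhypergraph of $G$ --- with at least $|E(G)|/(3v)$ edges, still of size $\Omega_v(n^2)$. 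So it suffices to prove the claim for linear $G$, and throughout we use that a linear hypergraph has every vertex-degree at most $n/2$, so deleting $O(1)$ vertices changes the edge count by only $O(n)$.

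The base case is $v=3$, which is precisely the Ruzsa--Szemer\'edi $(6,3)$-theorem \cite{RS} read contrapositively ($f(3)=6$): a linear $3$-uniform hypergraph with $\Omega(n^2)$ edges contains $3$ edges on at most $6$ vertices. One then inducts on $v$. The key arithmetic is $f(v)-f(v-1)=1+\big(\lfloor\log_2 v\rfloor-\lfloor\log_2(v-1)\rfloor\big)$, which equals $2$ when $v$ is a power of $2$ and $1$ otherwise. So, given a $(f(v-1),v-1)$-configuration $C$ inside $G$, it is enough to adjoin one further edge of $G$ meeting $V(C)$ in at least one vertex (when $v$ is a power of $2$) or in at least two vertices (when it is not): the new edge then contributes at most $2$, respectively at most $1$, new vertices, exactly matching the allowance. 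Summing the increments from $v=3$ upward, the running vertex count is $6+\sum_{w=4}^{v}\big(f(w)-f(w-1)\big)=f(v)$, and the term $\lfloor\log_2 v\rfloor$ is accounted for by the finitely many $w\le v$ that are powers of $2$, at which we are allowed the weaker ``one shared vertex'' condition. (Equivalently, one can build the configuration by repeatedly merging two roughly-equal halves $C_1,C_2$ of sizes $\lceil w/2\rceil$ and $\lfloor w/2\rfloor$ with disjoint edge sets: the identity $f(\lceil w/2\rceil)+f(\lfloor w/2\rfloor)-f(w)=1+\lfloor\log_2\lceil w/2\rceil\rfloor$ tells one how large $|V(C_1)\cap V(C_2)|$ must be at each merge, and telescoping again produces $f(v)$.)

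The real content is therefore the following \emph{anchored extension} step: in a dense linear hypergraph $G$, given a bounded configuration $C$ and a prescribed subset $R\subseteq V(C)$ of one or two vertices, find an edge of $G$ outside $C$ whose vertex set contains $R$. (When $|R|=2$, linearity forces $R$ to be a pair not already covered by an edge of $C$, so among the $\Theta(|V(C)|^2)$ uncovered pairs of $V(C)$ one needs a pair lying in an edge of $G$.) A naive count does not deliver this, since the $O(1)$ vertices of $C$ could all have bounded degree and a dense linear hypergraph can locally look like a disjoint union of sparse gadgets. The remedy is to strengthen the whole induction to an anchored form: one produces $C$ not freely but inside a carefully chosen large subhypergraph $G'\subseteq G$ in which the anchor set stays locally rich --- concretely, one carries along the induction a reservoir of edges of $G'$ meeting the current anchor, replenished at each step using the density of $G'$ --- so that the required extending edge is always available and its endpoints can be folded into the anchor for the next step.

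I expect this anchored density lemma, together with the bookkeeping needed to maintain the reservoir through the (at most $\lfloor\log_2 v\rfloor+O(1)$) relevant stages of the induction, to be the main obstacle; note that beyond the $(6,3)$-theorem it uses no further extremal input, which is exactly why a black-box appeal to Ruzsa--Szemer\'edi (which only produces \emph{isolated} copies of the base configuration) does not suffice. Once it is in place, a dense $(f(v),v)$-free linear hypergraph is impossible, and after undoing the reduction to linear hypergraphs we get the $o(n^2)$ bound for $(f(v),v)$-free hypergraphs; in particular every $(8,4)$-, $(9,5)$- or $(10,6)$-free $3$-uniform hypergraph has $o(n^2)$ edges.
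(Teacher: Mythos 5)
The paper does not prove this theorem: it quotes it as a result of S\'ark\"ozy and Selkow, and explicitly notes that their proof ``uses the regularity lemma''. So there is no in-paper argument for you to be compared against; what matters is whether your sketch actually closes. As you yourself flag, it leaves the ``anchored extension'' lemma unproven, and that is a genuine gap rather than a deferrable detail.

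The difficulty with the extension step (when $v$ is not a power of two, so the new edge must meet $V(C)$ in at least two vertices) is that linearity works against you rather than for you. The configuration $C$ occupies a bounded set $V(C)$ of $f(v-1)$ vertices, so there are only $\binom{f(v-1)}{2}=O(1)$ candidate pairs, in a linear hypergraph each such pair lies in at most one hyperedge, and several of those pairs are already spent on the edges of $C$ itself. No amount of global density prevents the remaining $O(1)$ pairs from all being uncovered, so the extending edge simply may not exist for a given $C$. Carrying a ``reservoir'' of edges through the anchor does not fix this, because a high-degree vertex in a linear hypergraph contributes $\Theta(n)$ incident edges almost all of which meet $V(C)$ in exactly that one vertex, which is the wrong intersection size at the non-power-of-two steps. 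The merge-two-halves variant you mention has the same issue: it asks for two edge-disjoint copies of a smaller configuration whose vertex sets overlap in a prescribed (and, for large $v$, growing) number of vertices, and a raw edge count in a linear hypergraph does not manufacture that coincidence.

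This is exactly what the regularity lemma buys S\'ark\"ozy and Selkow: one regularizes the link graphs, restricts to a dense regular triple, and there the $(6,3)$-theorem produces not one but an abundance of base configurations living in a controlled structure, so that the doubling/merging can be carried out by pigeonhole inside that structure rather than by hoping that a bounded fixed set happens to be well-covered. Thus the line ``beyond the $(6,3)$-theorem it uses no further extremal input'' is precisely the part of your proposal that fails; without regularity (or a substitute structural tool) the induction is stuck at the very first merge. It is also worth noting, as the paper does, that because the known proof is regularity-based, the resulting bound is only $o(n^2)$ with a $\log^*$-type dependence, which is why this theorem does not improve on Loh's $n^2/\exp(\Omega(\log^* n))$ bound for the $[3,2]$ problem.
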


This of course directly implies a bound of $o(n^2)$ for both the $(3,2)$ and $[3,2]$ problems, but the proof of Theorem \ref{SS} uses the regularity lemma and consequently does not improve on the $n^2/\exp(\Omega(\log^*n))$ bound from Loh. However, the result above is unlikely to be best possible. Indeed it is easy to prove a bound of $n^{3/2}$ for the $(8,4)$ case -- this follows from Theorem 1.2 of Loh~\cite{inctrips}. From this a power bound for the $(10,5)$ case follows easily, but $(9,5)$ and $(10,6)$ are still out of reach.

An additional reason to try to improve the $(9,5)$ bound to one of power-type is that it would answer the following question of Ruzsa~\cite{Ruzsa}.

\begin{problem}\label{R}
	Let $A\subset [n]$ be a set containing no non-trivial solutions to the equation $2x+2y=z+3w$ (meaning all solutions have $x=y=z=w$). How large can $A$ be as a subset of $n$? In particular, can it have size $n^{1-o(1)}$?
\end{problem}

The equation $2x+2y=z+3w$ is the simplest example of one for which simple Cauchy-Schwarz arguments do not work, but neither does the Behrend construction. So the best known upper bound is obtained by arguments similar to the proofs of Roth's theorem, but the best known lower bounds are of the form $n^\alpha$ with $\alpha<1$. It would be of great interest to know which bounds are nearer to the truth.

\begin{proposition}
	A power bound for the $(9,5)$-free version of Problem \ref{Turan} implies a power bound for Problem \ref{R}. 
\end{proposition}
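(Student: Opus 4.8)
The plan is to run the standard Ruzsa--Szemer\'edi-type encoding in reverse: starting from a set $A\subseteq[n]$ with no nontrivial solution to $2x+2y=z+3w$, I would build a tripartite $3$-uniform linear hypergraph $G$ on $O(n)$ vertices with $\Omega(|A|\cdot n)$ edges that is $(9,5)$-free. A power bound $O(m^{2-\epsilon})$ for the number of edges of a $(9,5)$-free such hypergraph on $m$ vertices would then give $|A|\cdot n=O(n^{2-\epsilon})$, i.e.\ $|A|=O(n^{1-\epsilon})$, which is a power bound for Problem~\ref{R}.

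First I would pass to a cyclic group to kill boundary effects: choose a prime $N=\Theta(n)$ large enough that any solution of $2x+2y=z+3w$ with entries in $[n]$ that holds in $\mathbb Z_N$ is a genuine integer solution, so that $A$, viewed inside $\mathbb Z_N$, still contains no nontrivial solution. Take vertex classes $V_1=V_2=V_3=\mathbb Z_N$, so $|V(G)|=3N=\Theta(n)$. For each $a\in A$ and each $d\in\mathbb Z_N$ put into $G$ a hyperedge $e_{a,d}$ with one vertex in each class, of the form $\phi_j(a,d)=\alpha_j d+\beta_j a$ for $j=1,2,3$, where the coefficient triple $(\beta_1,\beta_2,\beta_3)$ and suitable coprimality conditions on $N$ are chosen to reflect the pattern $(2,2,-1,-3)$. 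The two properties I need are: (i) any two of $\phi_1,\phi_2,\phi_3$ together determine $(a,d)$ over $\mathbb Z_N$, which makes $G$ linear (any two edges meet in at most one vertex) and gives exactly $|A|\cdot N$ distinct edges; and (ii) any system of shared-vertex equations among a few of the $e_{a_i,d_i}$, once the shifts $d_i$ are eliminated, collapses to an instance $2a_{i_1}+2a_{i_2}=a_{i_3}+3a_{i_4}$ of the equation in the edge-labels.

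The heart of the matter is then to check that $G$ is $(9,5)$-free. Suppose $5$ distinct edges $e_{a_1,d_1},\dots,e_{a_5,d_5}$ span at most $9$ vertices, and let $m_j$ be the number of distinct vertices they use in $V_j$. Linearity forces $m_j\geq 2$ for each $j$, so $m_1+m_2+m_3\leq 9$ leaves only a short list of collapse patterns. In each pattern, pigeonholing the values $d_i$ inside a class shows that the five edges are linked by a short cyclic chain of vertex-coincidences; reading each coincidence $\phi_j(a_i,d_i)=\phi_j(a_{i'},d_{i'})$ as a linear equation and composing around the chain eliminates the $d_i$ and, by the choice of $(\beta_1,\beta_2,\beta_3)$, produces a relation $2a_{i_1}+2a_{i_2}=a_{i_3}+3a_{i_4}$. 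Since $A$ has no nontrivial solution, $a_{i_1}=a_{i_2}=a_{i_3}=a_{i_4}$; feeding this back through the coincidences forces all the labels $a_i$ and all the shifts $d_i$ to coincide, contradicting the distinctness of the five edges. Thus $G$ is $(9,5)$-free, and the hypothesised power bound applied to $G$ yields $|A|\cdot N=|E(G)|=O((3N)^{2-\epsilon})$, whence $|A|=O(n^{1-\epsilon})$.

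The main obstacle is the design step: one must pin down the coefficient triple $(\beta_1,\beta_2,\beta_3)$ and the divisibility/coprimality conditions on $N$ so that property (ii) yields \emph{precisely} the equation $2x+2y=z+3w$ and no spurious relation that a large $A$ might satisfy, and then carry out the finite but somewhat fiddly enumeration of the ways $5$ edges can span $9$ vertices, checking that each collapse pattern genuinely closes up into our equation (or is already ruled out by linearity). This is a heavier version of the computation underlying the classical $(6,3)$-theorem, where the analogous three-edges-on-six-vertices analysis produces the three-term relation $2a_1=a_2+a_3$ characterising nontrivial $3$-term arithmetic progressions.
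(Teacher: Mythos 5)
Your high-level strategy matches the paper's: encode $A$ as a linear tripartite $3$-uniform hypergraph on $\Theta(n)$ vertices with $\Theta(n|A|)$ edges, verify $(9,5)$-freeness, then invoke the hypothesised power bound. However, the step you flag as ``the main obstacle'' --- pinning down the construction and carrying out the $(9,5)$ case analysis --- is genuinely the whole content of the proposition, and as it stands you have neither chosen a concrete construction nor verified that the enumeration closes up. That is a real gap, not a routine calculation to be deferred. Moreover, your claim that every collapse pattern among $5$ edges on $9$ vertices reduces to a cyclic chain eliminating the $d_i$'s and yielding $2a_{i_1}+2a_{i_2}=a_{i_3}+3a_{i_4}$ is too optimistic as stated: some patterns do not close into a $5$-cycle, and some collapse instead to a shorter relation. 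You need an organizing principle to tame the casework and to explain why those shorter relations are also excluded.

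The paper supplies both. It uses the simplest possible construction (faces $(x,y,z)$ with $z=x-y$ and $x+y\in A$, after a shift mod $n$ to control the vertex count), and then passes to the labelled bipartite graph on $X\times Y$ where edge $(x,y)$ has label $z$. The key observation is that a $(9,5)$ configuration in $G$ must produce a path in this labelled graph whose label sequence is $aa$, $aba$, or $abcab$. Linearity kills $aa$. An $aba$-path gives a $3$-term arithmetic progression in $A$, and --- this is a point your sketch omits --- any nontrivial $3$-AP $\{a,a+d,a+2d\}$ is itself a nontrivial solution $2a+2(a+2d)=(a+d)+3(a+d)$ of Ruzsa's equation, so $A$ is automatically AP3-free. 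An $abcab$-path gives the equation $2(a_4+a_3)=a_5+3a_2$ directly. With this translation the remaining enumeration becomes a clean finite check on bipartite graphs with $5$ labelled edges and at most $9$ vertices across the three parts, organised by the number $|W|$ of labels used. Without the bipartite/path reformulation and the AP3 observation, the case analysis you sketch is not merely fiddly --- it is unclear that it terminates in the desired conclusion, which is why I would count this as a missing idea rather than an omitted detail. Also, for what it's worth, the general coefficient scheme $\phi_j(a,d)=\alpha_j d+\beta_j a$ adds flexibility you do not need; the Ruzsa--Szemer\'edi-style $(x,y,x-y)$ template already produces exactly the right relations.
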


\begin{proof}
	Suppose that we have a set $A\subset [n]$ with no solutions to the equation $2x+2y=z+3w$. Then define a 3-uniform, linear, tripartite hypergraph $G$ with vertex sets $X=Y=Z=[n]$ and all the faces $(x,y,z)$ with $x-y=z$ and $x+y\in A$. By translating $A$ if necessary (modulo $n$) we can ensure that $G$ has $\mathcal{O}(n|A|)$ faces.
	
	It is easiest to visualise the graph $G$ as a labelled bipartite graph $G'$ on $X\times Y$, where the label on the edge $(x,y)$ is $z$ (for each face $(x,y,z)$ of $G$). Since $G$ is linear, each edge in $G'$ has precisely one label. For a string $S$ of letters, we say that the graph is \emph{$S$-free} if there is no path through $G'$ where the edges have labels following the pattern of $S$. We call such a path an \emph{$S$-path}. For instance, the graph is $aa$-free if no two incident edges have the same label, which follows from the fact $G$ is linear.
	
	Since $A$ is AP3-free, it follows that $G'$ is $aba$-free. We will now show that $G'$ is $abcab$-free, and we shall further show that any configuration of 5 faces supported on 9 vertices gives rise to either an $aa$-path, and $aba$-path or an $abcab$-path. 
	
	First we show that $G'$ is $abcab$-free. An $abcab$-path has 5 edges $(f_1,f_2,f_3,f_4,f_5)$. Each $f_i$ is an edge $(x_i,y_i)$ with $x_i+y_i=a_i\in A$. Without loss of generality we have $x_1=x_2$, $y_2=y_3$, $x_3=x_4$ and $y_4=y_5$. The constraints on the labelling then translate into arithmetical constraints on the $a_i$ and we find that
	\[a_4=a_1+2(a_2-a_3)\]
	and
	\[a_5=a_2+2(a_1-a_3).\]
	But then 
	\[2(a_4+a_3)=2a_1+4a_2-2a_3=a_5+3a_2\ ,\]
	which cannot happen for $a_i\in A$.
	
	We now claim that any configuration of five faces of $G$ supported on nine vertices gives rise to either an $aa$-path, an $aba$-path or an $abcab$-path in $G'$. This is little more than a brute-force check -- we need to confirm that a bipartite graph $H$ on $U\times V$ with five edges, each labelled from $W$, has one of the required paths if $|U|+|V|+|W|\le 9.$ 
	
	It obviously suffices to check the case $|U|+|V|+|W|= 9.$ If $|W|=1$ then $H$ must be a matching to avoid $aa$-paths, and so if $H$ has five edges then $|U|+|V|\ge 10$. If $|W|\ge 5$ then $|U|+|V|\le 4$ and so $H$ cannot have more than four edges. So we may assume $2\le |W|\le 4$.
	
	If $|W|=2$, then to avoid $aba$-paths and $aa$-paths $H$ must be a union of components of size at most 2. The only non-trivial case (without loss of generality) is $|U|=3$ and $|V|=4$, and if $H$ has five edges we find that two vertices of $U$ must have degree 2 with disjoint neighbourhoods in $V$, meaning that the final fifth edge cannot exist without connecting the components.
	
	If $|W|=4$ we must have $|U|=2$ and $|V|=3$ without loss of generality. All but one edge is present. Therefore there is a vertex from $U$, say $u_1$, with all three vertices from $V$ as neighbours. The other vertex from $U$, say $u_2$ has only two neighbours from $V$, say $v_1$ and $v_2$. The edges from $v_1$ and $v_2$ to $u_2$ must both be labelled with the label that $u_1$ does not see in order to avoid $aa$-paths through $u_1$. But this creates an $aa$-path through $u_2$.
	
	The remaining cases are $|U|=|V|=|W|=3$ and $|U|=2,|V|=4, |W|=3$. The latter case is dealt with by observing that there must be a vertex $u_1$ that sees all three labels, and the neighbourhood of the other vertex $u_2\in U$ intersects the neighbourhood of $u_1$ in at least one vertex, say $v$. But since $u_2$ sees two labels we get an $aba$-path with middle edge $u_1v$.
	
	Now let us consider the case $|U|=|V|=|W|=3$. If any vertex has degree 3 we will get an $aa$-path or an $aba$-path, so we may suppose that $U=\{u_1,u_2,u_3\}$ and that $u_1$ and $u_2$ have degree 2 and $u_3$ has degree 1. Clearly $u_1$ and $u_2$ must have a common neighbour $v_1$. Without loss of generality let $u_1v_2$ and $u_2v_3$ be edges of $H$. There is only one labelling of these four edges that avoids $aa$-paths and $aba$-paths, so we may assume without loss of generality that $u_1v_2$ and $u_2v_3$ have label $w_1$, $u_1,v_1$ has label $w_2$ and $u_2v_1$ has label $w_3$. Now there is one edge from $u_3$. It cannot go to $v_1$ without creating a vertex of degree 3, so without loss of generality we have the edge $u_3v_2$. This must be labelled $w_3$ to prevent $aa$-paths and $aba$-paths, but this leaves us with an $abcab$-path.
	
	Putting this together, since $G'$ is $aa$, $aba$ and $abcab$-free we find that we have been able to use $A$ to build a linear tripartite hypergraph $G$ which is $(9,5)$-free and has size $\mathcal{O}(n|A|)$. The result follows.
\end{proof}

The above proposition can be regarded either as a strong motivation for trying to find a power-type improvement to the trivial bound for the $(9,5)$ problem or as a lower bound on the difficulty of doing so.
\section{The Generalized $[r,s]$ Problem}

In this section we shall discuss the generalization to $s$-increasing or $s$-comparable sets of $r$-tuples, which we mentioned in the introduction. 

In the introduction we gave a construction of an $[r,s]$-sequence of size $n^{r/s}$ and commented that by an easy probabilistic argument it is not generally sharp. Here is that argument in more detail.

\begin{lemma}\label{FMM}
	Let $n$ be fixed and let the ratio $\beta=s/r$ be fixed with $\beta<(1-1/n)/2$. Then $F_{r,s}(n)$ (and hence $G_{r,s}(n)$ also) grows exponentially with $r$.
\end{lemma}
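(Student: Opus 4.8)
The plan is to exhibit an explicit long $s$-increasing sequence by taking a random subset of $[n]^r$ and applying the probabilistic deletion method, exactly in the spirit of the remark already made in the introduction for the case $n=2$, $\beta=1/8$. First I would fix $\beta<(1-1/n)/2$ and set $\gamma=(1-1/n)/2-\beta>0$. For two independent uniformly random $r$-tuples $a,b\in[n]^r$, the number of coordinates $i$ with $a_i<b_i$ is a sum of $r$ i.i.d.\ Bernoulli random variables each with mean $(1-1/n)/2$ (the probability that a uniform pair from $[n]$ is strictly increasing). So $\mathbb{E}\,|\{i:a_i<b_i\}|=r(1-1/n)/2=s+\gamma r$, and by a Chernoff/Hoeffding bound the probability that this count drops below $s=\beta r$ is at most $e^{-cr}$ for some constant $c=c(\gamma,n)>0$ depending only on $n$ and $\beta$.

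Next I would run the deletion argument, but with a twist to handle the fact that $s$-increasing is stronger than merely $a^i<_s a^{i+1}$: one must fix an ordering of the chosen tuples in advance so that ``$<_s$'' only needs to be checked for pairs $i<j$ in that fixed order. The clean way is to work with a random \emph{injection} rather than a random set. Choose $m$ to be determined, pick $m$ tuples $a^1,\dots,a^m$ independently and uniformly from $[n]^r$ (we may assume they are distinct, as collisions cost a negligible fraction), and call a pair $(i,j)$ with $i<j$ \emph{bad} if $a^i\not<_s a^j$. The expected number of bad pairs is at most $\binom{m}{2}e^{-cr}$. Delete one tuple from each bad pair; this leaves at least $m-\binom{m}{2}e^{-cr}$ tuples, and the surviving subsequence, taken in the induced order, has the property that $a^i<_s a^j$ for \emph{every} pair $i<j$ among the survivors — that is, it is genuinely $s$-increasing, not merely consecutively so. Choosing $m=\lfloor e^{cr/2}\rfloor$ makes $\binom{m}{2}e^{-cr}\le m^2 e^{-cr}/2\le m/2$ once $r$ is large, so we retain at least $m/2\ge \tfrac12 e^{cr/2}-1$ tuples. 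Hence $F_{r,s}(n)\ge \tfrac12 e^{cr/2}-1$, which is exponential in $r$ with a base depending only on $n$ and $\beta$; since $F_{r,s}(n)\le G_{r,s}(n)$ the same bound holds for $G$.

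The only genuine subtlety — and the thing to be careful about rather than a real obstacle — is making sure the deletion produces a sequence that is $s$-increasing in the strong (Definition~\ref{2inc}) sense and not just along consecutive terms. Fixing the order of the random tuples up front, and declaring a pair bad whenever the later tuple fails to be $<_s$-above the earlier one, does exactly this: after deletion, no bad pair survives, so $<_s$ holds between \emph{all} pairs in order. Everything else is a routine Chernoff estimate; one should just record explicitly that the probability $p=(1-1/n)/2$ that a uniform ordered pair in $[n]$ is increasing equals $\binom n2/n^2$, and that $s+\gamma r < \mathbb{E}$ with a gap linear in $r$, so Hoeffding gives a genuinely exponential tail $e^{-2\gamma^2 r}$ (any explicit constant in the exponent works). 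One could alternatively phrase the whole thing as a lower bound on the independence number in a suitable ``conflict'' tournament, but the deletion method is the shortest route and generalizes verbatim to give parts (i) of the later theorem.
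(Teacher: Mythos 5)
Your proposal is correct and takes essentially the same route as the paper: a random choice of $r$-tuples, a Chernoff/binomial tail bound showing that each ordered pair fails to be $<_s$-related with probability exponentially small in $r$, and a deletion step to pass from ``few bad pairs in expectation'' to a genuine $s$-increasing subsequence. Your explicit care about fixing the order in advance so that deletion yields $s$-increasing in the strong (all-pairs) sense is a worthwhile clarification, but it matches what the paper's brief deletion argument is implicitly doing.
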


\begin{proof}
	We use a simple first-moment argument, with modification. Let us choose a sequence of size $S$ of $r$-tuples by selecting the digits of each tuple from $[n]$ uniformly and independently at random. 
	
	Then if we take two distinct tuples $x_i$ and $x_j$ (with $i<j$) from this collection, the number of coordinates in which the first is larger than the second is binomially distributed as the sum of $r$ independent Bin($(1-1/n)/2$) distributions. The probability that $x_i$ is not $s$-less than $x_j$ is at most the probability this binomial distribution takes a value less than $s=\beta r$. This event is a binomial tail probability and consequently is exponentially small in $r$. 
	
	Therefore by taking $S$ to be exponentially large, and by removing any $x_i$ that forms part of a pair that do not have the appropriate $s$-increasing relation, we find an exponentially large $(r,s)$-sequence.
\end{proof}

As discussed in the introduction, the trivial upper bound for the size of an $s$-comparable set of $r$-tuples from $[n]$ is $n^{r-s+1}$, and there is a natural construction of size $n^{r/s}$. In the earlier sections of the paper we concentrated on the problem of fixing $r$ and $s$ (as 3 and 2 respectively) and aiming to improve one or other of these bounds. This problem seems to be difficult, even for larger fixed values of $r$ and $s$ where the bounds can be very far apart indeed. We shall therefore concentrate on the regime where $n$ is fixed and $r$ and $s$ vary but have a fixed ratio. We shall discuss the comparable version rather than the increasing version, but only since we have not found an interesting difference between the two problems in this regime.

Specifically, we will study the following problem.

\begin{problem}\label{genP}
	Let $n$ be a fixed positive integer, and $0<\beta<1$ a fixed  real number. Let $H_{n,\beta}(r)$ be the maximal size of an $s$-comparable collection of $r$-tuples, where $s=\beta r$. For fixed $n$ and $\beta$, how does $H_{n,\beta}(r)$ grow with $r$?
\end{problem}

Lemma~\ref{FMM} tells us that if $\beta<(1-1/n)/2$, then $H_{n,\beta}(r)$ grows exponentially with $r$. We complement this lemma with the following result.

\begin{proposition}\label{DRC}
	If $\beta>(1-1/n)/2$, then $H_{n,\beta}(r)$ is bounded.
\end{proposition}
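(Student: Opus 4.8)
The plan is to recast $s$-comparability in terms of Hamming distance. For a collection $\{a^1,\dots,a^m\}$ of $r$-tuples write $p_{ij}=\#\{k:a^i_k<a^j_k\}$ and $q_{ij}=\#\{k:a^i_k>a^j_k\}$, so that $p_{ij}+q_{ij}=d_H(a^i,a^j)$ is the Hamming distance, and the pair $\{i,j\}$ is $s$-comparable (with $s=\beta r$) exactly when $d_H(a^i,a^j)\ge 2s-|p_{ij}-q_{ij}|$. Everything hinges on the constant $\gamma:=2\beta-(1-1/n)$, which is positive precisely because $\beta>(1-1/n)/2$. I will call a pair \emph{balanced} if $|p_{ij}-q_{ij}|\le(\gamma/2)r$ and \emph{lopsided} otherwise; a balanced pair automatically satisfies $d_H(a^i,a^j)\ge(1-1/n+\gamma/2)r$, i.e.\ it sits a constant fraction above $(1-1/n)r$.

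The engine will be the Plotkin bound for codes over $[n]$: a code $C\subseteq[n]^r$ with minimum distance $d$ obeys $(|C|-1)d\le r(1-1/n)|C|$, hence $|C|\le d/(d-r(1-1/n))$ once $d>r(1-1/n)$. One proves this by bounding $\sum_{i<j}d_H(a^i,a^j)$ below by $\binom{|C|}{2}d$ and above, coordinate by coordinate, using convexity of $t\mapsto\binom t2$. Taking $d=(1-1/n+\gamma/2)r$ gives $|C|\le 1+2(1-1/n)/\gamma$, a quantity depending only on $n$ and $\beta$. So the whole task reduces to: partition an $s$-comparable collection into $O_{n,\beta}(1)$ sub-collections in which every pair is balanced.

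The hard part is the lopsided pairs, and the binary case is deceptively easy here. When $n=2$ the quantity $p_{ij}-q_{ij}$ is just the difference of Hamming weights, so splitting the tuples into $O(1/\gamma)$ weight-windows already makes every intra-window pair balanced. For $n\ge3$ no such shortcut exists: any statistic of a single tuple whose near-constancy would force $|p_{ij}-q_{ij}|$ to be small must be symmetric in the coordinates (the whole problem is coordinate-symmetric), yet $(1^N,2^N,3^N,4^N)$ and $(2^N,3^N,4^N,1^N)$ have identical composition while $|p_{ij}-q_{ij}|=2N=r/2$. Nor does plain averaging suffice: double-counting $\sum_{i<j}\max(p_{ij},q_{ij})$ against the trivial per-coordinate bound only gives boundedness for $\beta>1-1/n$, off by a factor of two. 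So the lopsided pairs must be handled structurally, and I expect this to be the main obstacle.

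The route I would take is to orient each lopsided pair from its dominating tuple to the dominated one, and then, using that a tournament on $m$ vertices contains a transitive sub-tournament on $\log_2 m$ of them (and that a boundedness statement for $s$-increasing sequences transfers to $s$-comparable sets, hence to $H_{n,\beta}$, at the cost of only an exponential), reduce to bounding the length of an $s$-increasing sequence $b^1,\dots,b^M$. For such a sequence one again splits the pairs into balanced and lopsided: if none is lopsided, Plotkin finishes; otherwise one wants to cover the sequence by boundedly many pieces, each either a ``chain'' in the lopsided order or a balanced code. The delicate point — the technical heart — is that lopsidedness is a property of pairs rather than of consecutive steps, and the lopsided digraph need not be acyclic (short directed cycles appear when $\gamma$ is small), so one must control how lopsided relations can be laid out along the sequence, exploiting that each lopsided step forces at least $s$ coordinates to increase strictly while the total coordinatewise increase from $b^1$ to $b^M$ is at most $(n-1)r$; making this quantitative so that the final bound depends only on $n$ and $\beta$ is where the real work lies. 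Together with Lemma~\ref{FMM} and part~(ii) of the theorem this would complete the trichotomy.
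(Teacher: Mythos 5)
Your reformulation is attractive and genuinely different from what the paper does, but it has a real gap that you yourself flag, and the sketch you offer to fill it does not close.

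The correct part is the dichotomy into \emph{balanced} and \emph{lopsided} pairs and the Plotkin bound for the balanced ones. If every pair $\{i,j\}$ in a subcollection satisfies $|p_{ij}-q_{ij}|\le(\gamma/2)r$, then since $s$-comparability forces $d_H(a^i,a^j)\ge 2\beta r-|p_{ij}-q_{ij}|$ you get $d_H\ge(1-1/n+\gamma/2)r$, and the $q$-ary Plotkin bound gives $|C|\le 1+2(1-1/n)/\gamma$, a constant depending only on $n$ and $\beta$. That is a clean observation.

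The gap is the lopsided pairs, and your proposed ``route'' does not bound them. Two specific problems. First, for the coordinatewise-increase argument along an $s$-increasing sequence $b^1,\dots,b^M$: the per-step increase satisfies only
\[
\sum_k\bigl(b^{i+1}_k-b^i_k\bigr)\ \ge\ p_{i,i+1}-(n-1)q_{i,i+1},
\]
and lopsidedness $p-q>(\gamma/2)r$ together with $p\ge\beta r$ does not make this positive: using only $q\le(1-\beta)r$ the lower bound is $[\beta-(1-1/n)/2-(n-2)(1-\beta)]r$, which for $n=3$ is positive only when $\beta>2/3$, far above the actual threshold $1/3$. So the ``total increase $\le(n-1)r$'' budget does not yield a bound near the threshold, and the argument degrades as $n$ grows. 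Second, passing from an $s$-comparable set to an $s$-increasing sequence via a transitive subtournament is fine for a boundedness statement, but the lopsidedness digraph is not the same as the $<_s$ orientation and, as you note, need not be acyclic; you give no mechanism for reconciling the two orientations or for covering the sequence by boundedly many chains/codes, and the obstruction is not merely ``making it quantitative'' -- there is no qualitative argument yet. Your own example $(1^N,2^N,3^N,4^N)$ versus its cyclic shift shows that lopsidedness is invisible to any symmetric statistic of a single tuple, which is precisely why a greedy or bucketing scheme does not produce the required decomposition into balanced subcollections.

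For comparison, the paper's proof attacks exactly this obstacle, but from the opposite direction: rather than trying to partition into balanced subcollections, it uses dependent random choice (Theorem~\ref{G1}) on the $n$ ``level graphs'' $G_i$ to pass to a large subcollection in which the graphs are simultaneously quasirandom. Quasirandomness (via Lemmas~\ref{G2} and~\ref{G3}) forces $\delta_{i,j}(x,x')\approx\E_{x''}\delta_i(x'')\delta_j(x'')$, which is symmetric in $i\leftrightarrow j$; in your language, it makes \emph{every} surviving pair effectively balanced, after which the constraint $\sum_i\delta_i(x)=1$ and the elementary inequality $\sum_{i<j}\delta_i\delta_j\le(1-1/n)/2$ give the contradiction when $\beta>(1-1/n)/2$. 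So the paper's dependent random choice is doing the job that your lopsided-pair analysis was supposed to do, and the ideas are complementary: your Plotkin step could in principle replace the final inequality once one has a balanced subcollection, but some regularization step like the paper's is needed to produce one.
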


Before proving Proposition \ref{DRC}, we note a parallel with a problem concerning real vectors.

\begin{problem}\label{vecs}
	Let $\eta\in[-1,1]$ be a fixed real. Then how large may a collection $V=\{v_i\}$ of $d$-dimensional real unit vectors be if $V$ has the property that for all $i\neq j$ we have $\langle v_i, v_j\rangle\leq\eta$? 
\end{problem}

\noindent This problem is well understood~\cite{Bollobook}. In particular, it is known that when $\eta$ is positive then the maximum size of $V$ grows exponentially in $d$, while when $\eta$ is negative the maximal size of $V$ is bounded independently of $d$ (by about $-\eta^{-1}$). When $\eta=0$ then $|V|\le 2d$, given by choosing an orthonormal basis $\{e_i\}$ and taking $V=\{e_i\}\cup\{-e_i\}$.

The parallels with Problem \ref{genP} are quite clear. In both problems we have a collection of objects constrained by some condition of pairs from the collection, and we have a parameter with a threshold value on one side of which the size of the collection may be exponentially large and on the other side of which the size of the collection is bounded. Moreover, the threshold is the expected value of the parameter when the two objects are chosen at random. It is tempting to conclude that Problem~\ref{genP} can be tackled by cleverly identifying tuples of integers with vectors in a way that translates Problem~\ref{genP} into Problem~\ref{vecs}, but the authors were unable to find such a transformation. 

Furthermore, there are some reasons to think that a transformation of this kind does not exist. In the unit-vectors problem, if we want to deduce that the size of $V$ is bounded when $\eta$ is negative, it is enough to assume not that \emph{every} inner product is at most $\eta$, but merely that the \emph{average} inner product is at most $\eta$. However, a similar weakening of the hypotheses for our problem in case (iii) is no longer sufficient for boundedness. Take, for example, the case $n=3$, and for an arbitrary $m$ take a collection of $3m$ $r$-tuples, where $m$ of them are equal to $(1,1,\dots,1)$, $m$ of them are equal to $(2,2,\dots,2)$ and $m$ of them are equal to $(3,3,\dots,3)$. Then if you choose two triples randomly from this collection, the average number of places where they differ is $2r/3$, which is significantly more than $(1-1/n)r/2$. It is easy to modify this example, if one wishes to, to make all the $r$-tuples distinct with a large value of $m$ at only a small cost to the average. 

It therefore appears that we are forced to use a more complicated argument in the proof of Proposition~\ref{DRC}. We shall apply a dependent random selection argument to pass from a collection of $r$-tuples to a large subcollection that resembles one whose members have had their coordinates selected independently at random from some distribution that depends on the coordinate. In an example such as the above, this dependent random selection would tend to pick out a subset that consisted of multiple copies of the same sequence, which would then lead to a contradiction. In the general case, the calculation is more delicate but we obtain a similar contradiction if the number of $r$-tuples we start with is large enough. 

We will begin by quoting three results from a preprint of the first author~\cite{Gowers}.

The first encompasses the dependent random choice aspect of the argument:

\begin{theorem}\label{G1}
	Let $G$ be a bipartite graph with vertex sets $X$ and $Y$ of sizes $m$ and $n$ respectively and let $\delta, \eta, \epsilon$ and $\gamma$ be positive constants less than 1. Let $\delta_1(x,x')$ be the density of the shared neighbourhood of $x$ and $x'$. Suppose that there are at least $\epsilon m^2$ pairs $(x,x')\in X^2$ such that $\delta_1(x,x')\ge \delta(1+\eta)^{1/2}$. Then there is a constant $\alpha\ge \delta$ and a subset $B\subset X$ of density at least $(\epsilon\gamma)^{8\eta^{-2}\log(\delta^{-1})^2}$ with the following two properties.
	\begin{enumerate}
		\item $\delta_1(x,x')\ge \alpha$ for all but at most $\gamma|B|^2$ pairs $(x,x')\in B^2$.
		\item $\delta_1(x,x')\le \alpha (1+\eta)$ for all but at most $\epsilon|B|^2$ pairs $(x,x')\in B^2$.
	\end{enumerate}	
\end{theorem}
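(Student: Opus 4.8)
The plan is to prove this the way such ``elementary regularity'' statements are usually proved: a dependent random choice argument organised around a dyadic decomposition of the codegree values. Since the hypothesis controls only the codegrees and not the edge density of $G$ directly, the first step is to extract from it the information a dependent random choice step actually needs. Summing $\delta_1(x,x')$ over all pairs $(x,x')\in X^2$ gives $\sum_{x,x'}\delta_1(x,x')=n^{-1}\sum_{y\in Y}d(y)^2$, where $d(y)$ is the degree of $y$; combining the hypothesis ($\ge\epsilon m^2$ pairs with $\delta_1\ge\delta(1+\eta)^{1/2}\ge\delta$) with the trivial bound $\sum_y d(y)^2\le m\sum_y d(y)$ shows that $G$ has edge density at least of order $\epsilon\delta$. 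That is the fuel.

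Next I would set up the dyadic scales. Partition the range $[\delta,1]$ of possible codegree densities into $L=O(\eta^{-1}\log(\delta^{-1}))$ multiplicative bins of ratio $(1+\eta)^{1/2}$, and for a subset $B\subseteq X$ and a level $\ell$ write $q_\ell(B)$ for the proportion of pairs $(x,x')\in B^2$ whose codegree density is at least the left endpoint of the $\ell$-th bin. The object being chased is a level $\ell$ and a set $B$ with $q_\ell(B)\ge 1-\gamma$ and $q_{\ell+2}(B)\le\epsilon$: taking $\alpha$ to be the left endpoint of the $\ell$-th bin then gives conclusions (1) and (2) verbatim, since two consecutive bins span a factor $1+\eta$. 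This half-power is exactly why the hypothesis is stated with the threshold $\delta(1+\eta)^{1/2}$ rather than $\delta$.

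The heart of the argument is an iteration. Start with $B=X$, where the hypothesis gives $q_1(B)\ge\epsilon$. At each round, let $\ell^*$ be the largest level with $q_{\ell^*}(B)\ge\epsilon$; if already $q_{\ell^*-1}(B)\ge 1-\gamma$, stop, since then $\alpha$ at level $\ell^*-1$ works. Otherwise a $\gamma$-proportion of pairs lie strictly below level $\ell^*-1$ while an $\epsilon$-proportion lie at level $\ell^*$ or above, and one performs a dependent random choice step: pick $t=O(L)$ vertices $y_1,\dots,y_t$ of $Y$ at random and replace $B$ by $B\cap N(y_1)\cap\dots\cap N(y_t)$. A pair $(x,x')$ survives with probability $\delta_1(x,x')^t$, so high-codegree pairs are strongly favoured, and a good choice of the $y_i$ (deleting one vertex from each surviving low pair) yields a subset in which the typical codegree level has risen by at least one, while the density of $B$ in $X$ has dropped by a factor at most $(\epsilon\gamma)^{O(L)}$. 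Since codegree levels are capped at $2L$, the process halts after $O(L)$ rounds, so the total density loss is $(\epsilon\gamma)^{O(L^2)}=(\epsilon\gamma)^{O(\eta^{-2}\log(\delta^{-1})^2)}$, matching the stated bound; when it halts, conclusion (1) holds because we have boosted, and conclusion (2) holds because no further boost was possible.

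The main obstacle I anticipate is the two-sidedness of the conclusion combined with the quantitative bookkeeping. Dependent random choice is intrinsically one-sided — it produces a set in which most pairs have \emph{large} codegree — so obtaining the matching \emph{upper} bound for all but an $\epsilon$-proportion of pairs forces one to pin $\alpha$ at precisely the ``saturation'' level of the process and to read conclusion (2) off the maximality of that level, rather than off a direct first-moment computation. The accounting is just as delicate: the total budget is only $(\epsilon\gamma)^{O(L^2)}$, so each of the $O(L)$ rounds may spend only $O(L)$ random vertices and lose at most $(\epsilon\gamma)^{O(L)}$ in density, which means the gain in ``typical level'' per round and the loss from the clean-up deletion must be balanced against each other very tightly. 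Finally, the very first round — upgrading ``$\epsilon$-proportion above $\delta(1+\eta)^{1/2}$'' to ``$(1-\gamma)$-proportion above some level that is still at least $\delta$'' before any lower bound on individual codegrees is available — is where most of the care is concentrated.
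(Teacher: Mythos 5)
The paper does not prove Theorem~\ref{G1}. Immediately before stating it, the paper says it ``will begin by quoting three results from a preprint of the first author~\cite{Gowers}'', and Theorem~\ref{G1}, Lemma~\ref{G2} and Lemma~\ref{G3} are all imported without proof. So there is no paper proof for me to compare your attempt against; the only hint the paper gives is that the result ``encompasses the dependent random choice aspect of the argument'', which is consistent with the route you chose.

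Evaluated on its own terms, your sketch is the natural shape for a statement of this type: dyadic codegree levels with multiplicative ratio $(1+\eta)^{1/2}$, an iterated dependent-random-choice boost, and both conclusions read off the terminal ``saturation'' level, with the exponent $O(\eta^{-2}\log(\delta^{-1})^2)$ coming from $O(L)$ rounds each costing $(\epsilon\gamma)^{O(L)}$ in density with $L=O(\eta^{-1}\log\delta^{-1})$. That all hangs together at the level of a plan, and your opening computation $\sum_{x,x'}\delta_1(x,x')=n^{-1}\sum_y d(y)^2$ is correct. But as you yourself flag, it is not yet a proof, and the two places I would press are genuinely open in the sketch. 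First, the termination/monotonicity claim: you assert that each DRC round raises ``the typical codegree level'' by at least one, but this is exactly what needs to be proved, and the clean-up step (deleting a vertex from each surviving low pair) can also destroy high-codegree pairs, so it is not obvious that the working level $\ell^*$ increases, or even that it cannot drop. Second, the expectation lower bound for the iterated DRC step: $\E|B'|=\sum_{x\in B}(d(x)/n)^t$ needs the vertices of the \emph{current} $B$ to have nontrivial degree in $Y$, and your edge-density preliminary only controls this for $B=X$. After a few rounds you need to re-derive a degree lower bound inside $B$ (for instance from the fact that most pairs of $B$ have codegree at least the current bin endpoint), and it is precisely the interaction of this with the per-round budget $(\epsilon\gamma)^{O(L)}$ that makes the bookkeeping delicate. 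Neither of these is a sign that the approach is wrong, but both are gaps, not just deferred calculations, and a complete argument would have to close them.
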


The second is a straightforward lemma that translates between different formulations of quasirandomness. We import also the definition of the \emph{box norm}, which is as follows:
$$\|f\|_{\square}^4=\mathbb{E}_{x,x',y,y'}f(x,y)f(x,y')f(x',y)f(x',y').$$
Note that in the preprint~\cite{Gowers} the box norm is referred to as the $U_2$ norm, written $\|.\|_{U_2}$. Recall that in a dense graph $f$ we have $\|f\|_{\square}=\mathcal{O}(1)$.

\begin{lemma}\label{G2}
	 Let $X$ and $Y$ be finite sets and let $f:X\times Y\to \{0,1\}$ be a bipartite graph. Let $\delta_1(x,x')$ be the density of the shared neighbourhood of $x,x'\in X$ and $\delta_2(y,y')$ be the density of the shared neighbourhood of $y,y'\in Y$. Let $\delta_2(y)$ be the density of the neighbourhood of $y\in Y$. Then TFAE.
	 \begin{enumerate}[(i)]
		\item $\mathbb{E}_{x,x'}\big|\delta_1(x,x')-\|\delta_2\|_2^2\big|^2\le c_1\|f\|_{\square}^4.$
	 	\item $\|f-1\otimes\delta_2\|_{\square}\le c_2\|f\|_{\square}^4.$
	 \end{enumerate}
\end{lemma}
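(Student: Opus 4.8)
The plan is to reduce the equivalence to one exact identity and one easy estimate. Set $m=\|\delta_2\|_2^2$ and $\psi(x)=\E_y f(x,y)\delta_2(y)-m$. I claim that
\[\E_{x,x'}\big|\delta_1(x,x')-m\big|^2=\|f-1\otimes\delta_2\|_\square^4+2\,\E_x\psi(x)^2\]
and that $\E_x\psi(x)^2\le\|f\|_\square^4$. Granting these, the identity gives $\|f-1\otimes\delta_2\|_\square^4\le\E_{x,x'}|\delta_1(x,x')-m|^2$, so (i) implies (ii) (read as $\|f-1\otimes\delta_2\|_\square^4\le c_2\|f\|_\square^4$) with no change of constant, while the identity together with the estimate gives $\E_{x,x'}|\delta_1(x,x')-m|^2\le\|f-1\otimes\delta_2\|_\square^4+2\|f\|_\square^4$, so (ii) implies (i) with $c_1=c_2+2$; since $f$ is $\{0,1\}$-valued and hence $\|f\|_\square\le 1$, these really are the same conditions up to constants.

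To prove the identity I would use the inner-product form of the box norm: writing $g=f-1\otimes\delta_2$, $g_x=f(x,\cdot)-\delta_2$, and $\langle a,b\rangle=\E_y a(y)b(y)$, one has $\|h\|_\square^4=\E_{x,x'}\langle h_x,h_{x'}\rangle^2$ for every $h$, so in particular $\|f\|_\square^4=\E_{x,x'}\delta_1(x,x')^2$. Expanding the inner product and using $\E_x f(x,y)=\delta_2(y)$ gives $\langle g_x,g_{x'}\rangle=\delta_1(x,x')-m-\psi(x)-\psi(x')$; equivalently, the function $D(x,x')=\delta_1(x,x')-m$ on $X\times X$ is the sum of $\langle g_x,g_{x'}\rangle$, $\psi(x)$ and $\psi(x')$. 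These three summands are pairwise orthogonal in $L^2(X\times X)$: indeed $\E_x\psi=0$ (since $\E_{x'}\delta_1(x,x')=\E_y f(x,y)\delta_2(y)$ and $\E_{x,x'}\delta_1(x,x')=m$), and $\E_{x'}\langle g_x,g_{x'}\rangle=\langle g_x,\E_{x'}g_{x'}\rangle=0$ because $\E_{x'}g_{x'}=0$, and between them these two facts kill every cross term. Taking squared $L^2$-norms, and noting that $\E_{x,x'}\langle g_x,g_{x'}\rangle^2=\|g\|_\square^4$, gives the claimed identity.

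For the estimate, since $\E_x\psi=0$ one has $\E_x\psi(x)^2=\E_x(\E_y f(x,y)\delta_2(y))^2-m^2=\E_{y,y'}\delta_2(y)\delta_2(y')\,\E_x f(x,y)f(x,y')-m^2$. Cauchy--Schwarz in the pair $(y,y')$ bounds the main term by $\|\delta_2\|_2^2\,\big(\E_{y,y'}(\E_x f(x,y)f(x,y'))^2\big)^{1/2}=\|\delta_2\|_2^2\|f\|_\square^2$, the last equality being the fact that the box norm is unchanged when the two vertex classes are interchanged. Finally $\|f\|_\square^2=\big(\E_{x,x'}\delta_1(x,x')^2\big)^{1/2}\ge\E_{x,x'}\delta_1(x,x')=m$ by Cauchy--Schwarz, so $\E_x\psi(x)^2\le m\|f\|_\square^2\le\|f\|_\square^4$.

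I expect no real obstacle here: this is the kind of dictionary lemma between quasirandomness formulations that is routine once the orthogonal decomposition is in place, and nothing extremal or structural is involved. The only genuine care needed is in getting the identity exactly right --- in particular the coefficient $2$ on $\E_x\psi(x)^2$ and the signs in the expansion of $\langle g_x,g_{x'}\rangle$ --- and in remembering the left--right symmetry of the box norm, which is precisely what lets the Cauchy--Schwarz step in the previous paragraph land back on $\|f\|_\square$ rather than on an unrelated quantity.
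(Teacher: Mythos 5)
The paper does not actually prove Lemma~\ref{G2}: the statement is imported, without proof, from the cited preprint~\cite{Gowers}, so there is no argument in this paper to compare against. That said, your proof is correct and self-contained. Writing $g_x=f(x,\cdot)-\delta_2$ and $\psi(x)=\E_y f(x,y)\delta_2(y)-m$ with $m=\|\delta_2\|_2^2$, the decomposition $\delta_1(x,x')-m=\langle g_x,g_{x'}\rangle+\psi(x)+\psi(x')$ is exactly right; the pairwise orthogonality of the three summands in $L^2(X\times X)$ does follow from the two facts $\E_{x'}g_{x'}=0$ and $\E_x\psi=0$; and squaring gives the identity $\E_{x,x'}|\delta_1(x,x')-m|^2=\|f-1\otimes\delta_2\|_\square^4+2\,\E_x\psi(x)^2$, since $\E_{x,x'}\langle g_x,g_{x'}\rangle^2=\|g\|_\square^4$. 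The auxiliary bound $\E_x\psi^2\le m\|f\|_\square^2\le\|f\|_\square^4$ is also correct: the Cauchy--Schwarz step lands on $\big(\E_{y,y'}(\E_x f(x,y)f(x,y'))^2\big)^{1/2}=\|f\|_\square^2$ by left--right symmetry of the box norm, and $m=\E_{x,x'}\delta_1(x,x')\le\|f\|_\square^2$ by Cauchy--Schwarz again. Together these give both implications with the explicit constants you state. You are right to flag the degree mismatch in condition (ii): as printed it has $\|f-1\otimes\delta_2\|_\square$ (not its fourth power) against $c_2\|f\|_\square^4$, and your reading with the fourth power on the left is surely the intended one, as it makes the two sides of (ii) homogeneous of the same degree and matches condition (i); this is a defect in the lemma as stated in the paper, not a gap in your argument.
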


The third is obtained from Lemma~\ref{G2}, and will be used to translate the quasirandomness into an applicable condition. 

\begin{lemma}\label{G3}
	Let $X$, $X'$ and $Y$ be finite sets and let $f:X\times Y\to \{0,1\}$ and $f':X'\times Y\to \{0,1\}$ be bipartite graphs. Let $\delta_1$ be the density of the shared neighbourhood of its argument(s) as a subset of $Y$, let $\delta_2(y)$ be the density of the neighbourhood of $y\in Y$ as a subset of $X$ and let $\delta_2'$ be the density of the neighbourhood of $y\in Y$ as a subset of $X'$. Let $0<c\le 2^{-24}$, and suppose that $$\mathbb{E}_{x_1,x_2}\big|\delta_1(x_1,x_2)-\|\delta_2\|_2^2\big|^2\le c\|f\|_{\square}^4$$
	and that $$\mathbb{E}_{x_1',x_2'}\big|\delta_1(x_1',x_2')-\|\delta_2'\|_2^2\big|^2\le c\|f'\|_{\square}^4.$$
	Then 
	$$\mathbb{E}_{x,x'}\big|\delta_1(x,x')-\langle \delta_1,\delta_2\rangle\big|^2\le 16c^{1/16}\|f\|_{\square}^2\|f'\|_{\square}^2.$$
\end{lemma}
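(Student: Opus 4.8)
The plan is to bootstrap from Lemma~\ref{G2}. The two displayed hypotheses are exactly condition~(i) of that lemma, once for the bipartite graph $f$ and once for $f'$, so Lemma~\ref{G2} tells us that $f$ is close in box norm to its degree part $1\otimes\delta_2$ and $f'$ is close to $1\otimes\delta_2'$: concretely $\|f-1\otimes\delta_2\|_\square$ and $\|f'-1\otimes\delta_2'\|_\square$ are at most small powers of $c$ times $\|f\|_\square$, resp.\ $\|f'\|_\square$. Throughout I read the quantity $\langle\delta_1,\delta_2\rangle$ on the right-hand side of the conclusion as $\mu:=\mathbb{E}_y\delta_2(y)\delta_2'(y)$, which equals $\mathbb{E}_{x\in X,\,x'\in X'}\delta_1(x,x')$; so what has to be shown is that the cross-codegree function $\delta_1$ concentrates in $L^2(X\times X')$ about its mean.

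For the main computation, set $g_x:=f(x,\cdot)-\delta_2$ and $g'_{x'}:=f'(x',\cdot)-\delta_2'$, so that $\mathbb{E}_x g_x\equiv 0$ and $\mathbb{E}_{x'}g'_{x'}\equiv 0$, and note $\delta_1(x,x')=\langle f(x,\cdot),f'(x',\cdot)\rangle_Y$ while $\mu=\langle\delta_2,\delta_2'\rangle_Y$. Expanding both inner products gives
\[
\delta_1(x,x')-\mu=\langle\delta_2,g'_{x'}\rangle+\langle g_x,\delta_2'\rangle+\langle g_x,g'_{x'}\rangle,
\]
so by $(a+b+c)^2\le 3(a^2+b^2+c^2)$ it is enough to bound the second moment of each of the three correction terms. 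Introduce the positive semidefinite kernels $P(y,y'):=\mathbb{E}_x g_x(y)g_x(y')$ and $P'(y,y'):=\mathbb{E}_{x'}g'_{x'}(y)g'_{x'}(y')$, which satisfy $\|P\|_{L^2(Y^2)}=\|f-1\otimes\delta_2\|_\square^2$ and $\|P'\|_{L^2(Y^2)}=\|f'-1\otimes\delta_2'\|_\square^2$. Then $\mathbb{E}_{x'}\langle\delta_2,g'_{x'}\rangle^2=\mathbb{E}_{y,y'}\delta_2(y)\delta_2(y')P'(y,y')\le\|\delta_2\|_2^2\,\|P'\|_{L^2(Y^2)}$ by Cauchy--Schwarz in $(y,y')$; the term $\mathbb{E}_x\langle g_x,\delta_2'\rangle^2\le\|\delta_2'\|_2^2\|P\|_{L^2(Y^2)}$ is symmetric; and $\mathbb{E}_{x,x'}\langle g_x,g'_{x'}\rangle^2=\mathbb{E}_{y,y'}P(y,y')P'(y,y')\le\|P\|_{L^2(Y^2)}\|P'\|_{L^2(Y^2)}$, again by Cauchy--Schwarz. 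Finally $\|\delta_2\|_2^2=\mathbb{E}_{x,x'\in X}\delta_1(x,x')\le(\mathbb{E}_{x,x'\in X}\delta_1(x,x')^2)^{1/2}=\|f\|_\square^2$, and likewise $\|\delta_2'\|_2^2\le\|f'\|_\square^2$.

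Putting these together, $\mathbb{E}_{x,x'}|\delta_1(x,x')-\mu|^2$ is at most three times $\|f\|_\square^2\|f'-1\otimes\delta_2'\|_\square^2+\|f'\|_\square^2\|f-1\otimes\delta_2\|_\square^2+\|f-1\otimes\delta_2\|_\square^2\|f'-1\otimes\delta_2'\|_\square^2$; substituting the box-norm bounds supplied by Lemma~\ref{G2} and using $c\le 2^{-24}$ to absorb the numerical constants yields the claimed bound $16c^{1/16}\|f\|_\square^2\|f'\|_\square^2$. The step needing the most care --- and the only place the argument is not completely routine --- is the last one: pinning down the precise exponent $1/16$ and the constant $16$ requires tracking exactly how much the power of $c$ degrades in the ``(i)$\Rightarrow$(ii)'' direction of Lemma~\ref{G2}, and deciding whether to keep $\|\delta_2\|_2,\|\delta_2'\|_2$ or to replace them by $\|f\|_\square,\|f'\|_\square$ before collecting terms. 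Everything else is the three-term split above, Cauchy--Schwarz, and the positive-semidefiniteness of $P$ and $P'$, which is consistent with the text describing Lemma~\ref{G3} as simply ``obtained from Lemma~\ref{G2}''.
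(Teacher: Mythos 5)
The paper does not actually prove Lemma~\ref{G3}: it is one of three statements quoted verbatim from the preprint~\cite{Gowers}, accompanied only by the remark that it is ``obtained from Lemma~\ref{G2}''. There is therefore no in-paper proof against which to check your argument, but your reconstruction is sound and is surely the intended route. Reading $\langle\delta_1,\delta_2\rangle$ as $\langle\delta_2,\delta_2'\rangle=\mathbb{E}_{x\in X,\,x'\in X'}\delta_1(x,x')$ is almost certainly the correct repair of a typo (it is the only reading for which the quantity subtracted is the mean of what is being concentrated). The expansion $\delta_1(x,x')-\mu=\langle\delta_2,g'_{x'}\rangle+\langle g_x,\delta_2'\rangle+\langle g_x,g'_{x'}\rangle$, the introduction of the positive semidefinite kernels $P,P'$ with $\|P\|_{L^2(Y^2)}=\|f-1\otimes\delta_2\|_\square^2$, the three Cauchy--Schwarz bounds, and the crude estimate $\|\delta_2\|_2^2\le\|f\|_\square^2$ are all correct. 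One small improvement: you do not need the factor of $3$. Since $\mathbb{E}_x g_x\equiv 0$ and $\mathbb{E}_{x'}g'_{x'}\equiv 0$, all three cross terms in $\mathbb{E}_{x,x'}(a_x+b_{x'}+c_{x,x'})^2$ vanish identically, so the second moment of $\delta_1(x,x')-\mu$ equals the sum of the three individual second moments exactly. Your one flagged uncertainty --- recovering the exponent $1/16$ and the constant $16$ --- genuinely cannot be resolved from the text, because the statement of condition~(ii) in Lemma~\ref{G2}, namely $\|f-1\otimes\delta_2\|_\square\le c_2\|f\|_\square^4$, is not homogeneous under rescaling of $f$ and so is evidently garbled; the exact dependence of $c_2$ on $c_1$ must be taken from the preprint. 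You were right to isolate that as the only non-routine step.
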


\begin{proof}[Proof of Proposition~\ref{DRC}]
	We prove the result by induction on $n$, with $n=1$ being trivial.
	
	Suppose we have a $\beta r$-comparable subset $S\subset [n]^r$. We will first pass to a subset $T$ of $S$ such that each tuple in $T$ has entry $i$ in at least $r/2n^2$ different positions. Indeed, suppose that there exists a subcollection $S'\subset S$ and an entry $i$ such that every tuple from $S'$ has entry $i$ in fewer than $r/4n^2$ positions. Then, by replacing the entries $i$ with $i+1$ (or $i-1$ if $i=n$) and relabelling so that the entries come from $[n-1]$, we get a collection of $r$-tuples with entries from $n-1$ and the $r$-tuples are pairwise at least 
	$$\frac{1-1/n}{2}r-2r/4n^2>\frac{1-1/n}{2}$$
	comparable. By our induction hypothesis the size of $S'$ is therefore bounded independently of $r$. Therefore, for $|S|$ sufficiently large (independent of $r$) we can find the required subcollection $T$ of size proportional to $|S|$ where the constant of proportionality is dependent on $n$ but not on $r$.
	
	We now consider the following set-up. We form a bipartite graphs $G_1=X\times Y$ where $X$ has one vertex for each tuple in $T$ and $Y=[r]$, and the edge $(x,k)$ is present in $G_1$ if the tuple corresponding to $x$ has the entry $1$ in the $k$th position. Write $\delta_1(x,x')$ for the density of the shared neighbourhood of $x$ and $x'$, and $\delta_2(k)$ for the density of the neighbourhood of $k\in Y$.
	
	We will first apply Theorem \ref{G1} to get a certain quasirandomness property for the graph $G_1$.
	
	Since the tuples in $T$ have every possible entry at least $r/4n^2$ times, we see that the degree of each vertex $x\in X$ is at least $r/4n^2$. Consequently the average degree in $Y$ is at least $|S|/4n^2$ and so the number of pairs $x,x'$ from $X$ that share a common neighbour is at least $|S|^2r/16n^4$. We thus find that at least $|S|^2/16n^4$ pairs $x,x'\in X$ have shared neighbourhood of size at least $r/16n^4$.
	
	This allows us to apply Theorem \ref{G1} with $\epsilon\le 1/16n^4$, $\delta(1+\eta)^{1/2}\le 1/16n^4$ and $\gamma=\epsilon$ small. We thus find a constant $\alpha_1\ge \delta$ and a subset $B$ of $X$ of size proportional to $|X|$ (ie $|B_1|=\lambda(\epsilon,\gamma,\eta)|X|$ and $\lambda$ is independent of $r$) such that
	$$\alpha_1\le \delta_1(x,x')\le (1+\eta)\alpha_1$$
	for all but at most $2\epsilon|B_1|^2$ pairs $(x,x')\in B_1^2$.
	
	Now we can define $G_2$ to be the bipartite graph $G_2=X\times Y$ where $X$ has one vertex for each tuple in $B$ and $Y=[r]$, and the edge $(x,k)$ is present in $G_2$ if the tuple corresponding to $x$ has the entry $2$ in the $k$th position. We can repeat the above argument to find a proportionally sized $B_2\subset B_1$ such that 
	$$\alpha_2\le \delta_1(x,x')\le (1+\eta)\alpha_2$$
	for all but at most $2\epsilon'|B_1|^2$ pairs $(x,x')\in B_1^2$. By taking $\epsilon$ much smaller than $\epsilon'$ we can ensure that in $G_1$ we also have
	$$\alpha_1\le \delta_1(x,x')\le (1+\eta)\alpha_1$$
	for all but at most $2\epsilon'|B_2|^2$ pairs $(x,x')\in B_2^2$.
	
	Continuing this for all of the $n$ graphs $G_i$ defined to continue the obvious pattern of $G_1$ and $G_2$ above, we can eventually find (for any $\eta,\mu>0$) a subset $B$ of the set of tuples of size $\lambda(\mu,\eta)|S|$ and constants $\alpha_i>0$ such that in the graph $G_i$
	$$\alpha_i\le \delta_1(x,x')\le (1+\eta)\alpha_i$$
	for all but at most $\mu|B|^2$ pairs $(x,x')\in B^2$.
	
	This tells us that simultaneously all the graphs $H_i$, defined by taking the induced subgraph of $G_i$ on $B\times [r]$, are in a certain unbalanced sense quasirandom. More precisely, since our graphs have the property that almost all pairs of vertices $x,x'\in B$ have shared neighbourhoods of approximately the same size, the LHS of condition (i) from Lemma \ref{G2} is small. Therefore Lemma \ref{G2} tells us that the $H_i$ are quasirandom permutations of the rank 1 matrix $1\otimes \delta_2$. These can be thought of as behaving like random bipartite graphs with a given degree sequence.
	
	We will now apply Lemma~\ref{G3}, which translates the quasirandomness into an applicable condition. For distinct $i$ and $j$ we view the graphs $H_i$ and $H_j$ taking $H_i=X\times Y$ with vertices in $X$ corresponding to the tuples in $B$ and $Y=[r]$ and $H_j=X'\times Y$ with vertices in $X'$ also corresponding to the tuples in $B$. Let $\delta_{i,j}(x,x')$ be the density of the shared neighbourhood of $x\in X$ and $x'\in X'$. By applying Lemma~\ref{G3} we find that for any distinct $i,j$ we have that for almost all pairs $(x,x')\in X\times X'$ the shared neighbourhood $\delta_{i,j}(x,x')$ has density approximately $\mathbb{E}_{x,x'} \delta_{x,i}\delta_{x',j}$, where $\delta_i(x)$ is defined to be the density of the neighbourhood of vertex $x$ in $H_i$ (which is also the density of the number of positions in which the tuple corresponding to $x$ has entry $i$). Specifically, we have 
	$$\mathbb{E}_{x,x'}\bigg|\delta_1(x,x')-\mathbb{E}_{x''}\delta_i(x'')\delta_j(x'')\bigg|<\theta$$
	where $\theta$ can be made arbitrarily small provided $B$ is sufficiently large. 
	
	Observe that for any pair $(x,x')$ of tuples from $B$ we must either have $x<_{\beta r} x'$ or $x'<_{\beta r} x$ since $B$ is $\beta r$-comparable. Therefore we must have
	$$\mathbb{E}_{x\neq x'}\left(\sum_{i<j}\delta_{i,j}(x,x')\right)\ge \beta$$
	and therefore
	$$\sum_{i<j}\mathbb{E}_{x}\delta_i(x)\delta_j(x)\ge \beta -\theta n^2$$
	which gives
	$$\mathbb{E}_{x}\left(\sum_{i<j}\delta_i(x)\delta_j(x) \right)\ge \beta -\theta n^2.$$
	Now we note that we also must have
	$$\sum_i \delta_i(x)=1$$ 
	for all tuples $x\in B$. It is an easy exercise to show that 
	$$\sum_{i<j}\delta_i(x)\delta_j(x)$$
	is maximized subject to the constraint 
	$$\sum_i \delta_i(x)=1$$
	when $\delta_i(x)=1/n$ for all $i$. Therefore
	$$\mathbb{E}_{x}\left(\sum_{i<j}\delta_i(x)\delta_j(x) \right)\le\frac{n(n-1)}{2n^2}$$
	which implies that
	$$\beta -\theta n^2\le\frac{n(n-1)}{2n^2}=\frac{1-\frac1n}{2}.$$
	But if $\beta>(1-1/n)/2$ then by making $\theta$ sufficiently small we have a contradiction. So it must be that we cannot make $\theta$ arbitrarily small, and so $|S|$ is bounded independently of $r$.
\end{proof}

We have not attempted to obtain an explicit bound on the dependence of $H_{n,\beta}(r)$ on $(1-1/n)/2-\beta$. If we were concerned to find as good a bound as possible, then instead of using Theorem \ref{G1} iteratively it would be more efficient to prove directly a version of the theorem that works for $n$ bipartite graphs simultaneously.

The final case left to consider is the threshold $\beta=(1-1/n)/2$. Given the parallel with the vector problem, we expect the size of the collection in this threshold case to be unbounded but sub-exponential -- perhaps even only linear in $s$.

We have not managed to prove this, but in the other direction it is not too difficult to find a linear-sized construction, at least when $n$ is a prime power.

\begin{proposition}
	Let $q$ be a fixed prime power and let $\beta=(1-1/q)/2$. Let $r=q^k$ and $s=\beta r$. Then there exists an $s$-increasing collection of $r$-tuples from $[q]$ of size $q^{k+1}=qr$.
\end{proposition}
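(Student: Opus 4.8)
The plan is to induct on $k$, with inductive hypothesis exactly the statement: for every $k\ge 1$ there is a $\beta$-increasing sequence of $q^k$-tuples over $[q]$ of length $q^{k+1}$. (Note that $s/q^k=\beta$ always, and the length is always $q$ times the dimension, so this is the natural invariant to carry along.) The base case $k=1$ asks for $q^2$ tuples of length $q$ that are $\lfloor(q-1)/2\rfloor$-increasing; this is a finite problem for each $q$, and for small $q$ it already follows from the construction of Proposition~\ref{triv2}.

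For the inductive step I would use a \emph{blow-up} that multiplies both the dimension and the length by $q$. Given a $\beta$-increasing sequence $A=(a^1,\dots,a^{q^{k+1}})$ of $q^k$-tuples, index the new coordinates by pairs $(y,v)$ with $y$ an old coordinate and $v\in[q]$, and the new tuples by pairs $(i,j)$ with $i$ an old index and $j\in[q]$, listed lexicographically with $i$ as the primary key. On a reserved set $V_0$ consisting of roughly half the values of $v$ put an ``old copy'', $T_{(i,j)}(y,v)=a^i_y$; on the remaining values of $v$ put something depending on $j$. The clean half of the verification is immediate: for $i<i'$ the old-copy coordinates already contribute $|V_0|\cdot\#\{y:a^i_y<a^{i'}_y\}\ge|V_0|\,\beta q^k$ coordinates of increase by the inductive hypothesis, regardless of $j$ and $j'$.

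The main obstacle is designing the non-old-copy coordinates, which must do two apparently conflicting jobs: separate $T_{(i,j)}$ from $T_{(i,j')}$ by at least $\beta q^{k+1}$ coordinates when $j<j'$ (here the old copy is useless, the $a^i$'s agreeing), and yet, when $i<i'$, supply the $\approx\beta q^{k+1}$ coordinates of increase still missing once the old copy has contributed only $|V_0|\,\beta q^k<\beta q^{k+1}$. The first requirement pushes the $j$-dependence to be essentially a chain (values increasing in $j$), while the second pushes it to be itself $\beta$-increasing in the $i$-variable with the correct total; I would try to reconcile these by nesting a copy of a suitably re-parametrised $\beta$-increasing sequence inside the non-old-copy block, arranged so that its $j$-cross-sections form a chain, and checking that the fractions balance is where the real work — and the prime-power hypothesis, through an algebraic construction of the inner block — would enter. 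An alternative starting point worth trying is to take the $q^{k+1}$ tuples to be the affine-linear functions $\mathbb F_q^k\to\mathbb F_q$ composed with a fixed order-bijection $\psi\colon\mathbb F_q\to[q]$, since then any two tuples with linearly independent linear parts are automatically \emph{exactly} $\beta q^k$-comparable in either order; the difficulty in that approach is confined to the ``parallel'' families (equal or proportional linear parts, together with the constant tuples), where one must choose $\psi$ and the listing order so that every forward comparison is at least $\beta q^k$, which amounts to a rather delicate balance condition on $\psi$ relative to the affine group $\mathrm{AGL}(1,q)$.
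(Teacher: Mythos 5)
Your proposal never reaches a complete argument: both routes are left with explicitly acknowledged gaps (the new-block design in the inductive blow-up, and the ``parallel families'' issue in the affine route), and the claimed base case $k=1$ does not in fact follow from Proposition~\ref{triv2}, since that result requires $n=q$ to be a perfect $s$-th power with $s\approx q/2$, which fails for all but tiny $q$.

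Your second route --- the affine maps $\mathbb F_q^k\to\mathbb F_q$ --- is exactly the construction the paper uses, but the paper's verification is much shorter and sidesteps the difficulty you flag. The key observation is that two distinct affine functions either disagree everywhere or agree exactly on an affine hyperplane of size $q^{k-1}$; hence any two of the corresponding $r$-tuples disagree in at least $q^k-q^{k-1}=2s$ coordinates, and by pigeonhole one of the two is $s$-less than the other. This gives $s$-comparability directly, for any bijection $\psi\colon\mathbb F_q\to[q]$, with no ordering of the tuples required and no case split between independent and proportional linear parts. That is what is needed in context: the construction is the lower-bound companion to $H_{n,\beta}(r)$, the maximal size of an $s$-\emph{comparable} collection. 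Your observation that linearly independent linear parts give \emph{exact} $s$-comparability in both directions is correct and more refined than what the paper records, and your worry that a careful choice of $\psi$ and listing order would be needed to make the parallel, proportional and constant families into a genuinely $s$-\emph{increasing} chain is a real subtlety that the paper's proof does not address either; but for the comparability claim you should simply carry out the uniform agreement count and stop, rather than leaving both approaches hanging.
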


\begin{proof}
	Let $\mathbb F_q$ be the field with $q$ elements and consider the set of all affine functions from $\mathbb{F}_q^k$ to $\mathbb{F}_q$, that is to say functions $f$ of the form $f: x\mapsto ax+b$ for $a\in \mathbb{F}_q$ and $b\in \mathbb{F}_q^k$, viewed in the obvious way as a collection of sequences of length $q^k$ with entries from $[q]$.
	
	Note that two distinct affine functions agree on a subspace of codimension 1 or disagree everywhere, so any distinct pair of sequences from this collection agree in at most $q^{k-1}$ places. Additionally, we see that if two sequences of length $q^k$ from $[q]$ agree in at most $q^{k-1}$ places then they are certainly $(q^k-q^{k-1})/2$-comparable, and the proposition follows by taking our collection to be the set of affine functions considered above.
\end{proof}

The remaining goal, therefore, is to establish a sub-exponential bound when $\beta=(1-1/n)/2$. This would establish the desired trichotomy and mirror the behaviour of Problem \ref{vecs}. This appears to be more difficult than the corresponding vector question, and is another possible direction for future work.

\section{Conclusions and future directions}\label{conc}

This paper has raised more questions than it has answered, but we hope that we have given convincing motivation for a rich collection of related and surprisingly challenging problems.

The development that we would most like to see is an improvement to the power bound that we achieve in Theorem~\ref{PB}, ideally to a bound of $n^{3/2}$. If this lower bound is indeed sharp, one would expect it to be provable by a clean inductive argument, but we have had trouble making this work. 

To give an idea of the difficulty, we will describe one possible approach along these lines. Say that a 2-increasing collection of triples $T$ in $[r]\times [s]\times [t]$ has a \emph{decomposition} if we can pick a coordinate, say $t$ without loss of generality, such that the following holds. We can find a partition of $[r]\times[s]$ into sets $R_i\times S_i$ where $R_i\subset [r]$ and $S_i \subset [s]$ for all $i$ such that there exists a partition $[t]$ into sets $T_i$ in such a way that all the triples of $T$ lie in the sets $R_i\times S_i\times T_i$. 

Suppose that all 2-increasing collections of triples have a decomposition. Then we could use induction to bound the total number of triples by $\sum_i(|R_i||S_i||T_i|)^{1/2}$, which by Cauchy-Schwarz is at most $(rst)^{1/2}$, and the problem would be solved. 

It is very tempting to conjecture that a decomposition always exists, since no counterexample is easily found by hand and it would also provide a natural proof of the conjectured power bound for 2-increasing sequences. However, we eventually came across a counterexample, which is given in its three grid representations in Figure~\ref{lastFig}.

\begin{figure*}[t!]
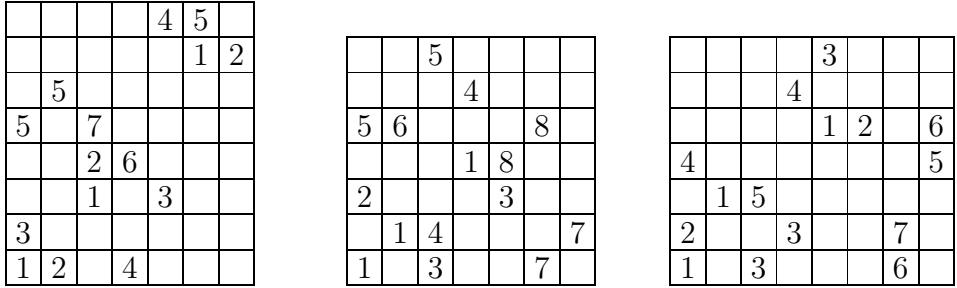

	\centering
	\begin{subfigure}[b]{0.3\textwidth}
		\centering
		\[
		\young({~~~~45~,~~~~~12,~5~~~~~,5~7~~~~,~~26~~~,~~1~3~~,3~~~~~~,12~4~~~})
		\]
	\end{subfigure}%
	\begin{subfigure}[b]{0.3\textwidth}
		\centering
		\[\young({~~5~~~~,~~~4~~~,56~~~8~,~~~18~~,2~~~3~~,~14~~~7,1~3~~7~})\]
	\end{subfigure}%
	\begin{subfigure}[b]{0.3\textwidth}
		\centering
		\[\young({~~~~3~~~,~~~4~~~~,~~~~12~6,4~~~~~~5,~15~~~~~,2~~3~~7~,1~3~~~6~})\]
	\end{subfigure}
	\caption{A counterexample to the decomposition conjecture. It is a 2-increasing sequence of 15 triples from $[6]\times[7]\times[8]$. We give it in grid representation for each of the three possible choices of label coordinate.}
	\label{lastFig}
\end{figure*}

It seems to be hard to find such counterexamples. The example above was found with the help of a computer search. Very briefly, the algorithm we used works as follows. It builds up a 2-increasing sequence triple by triple, and at each step it randomly chooses a minimal triple (in the usual partial order on $[n]^3$) that is 2-greater than all the triples chosen so far, halting when it runs out of possibilities. Then it checks for decomposability.

The check can be done in polynomial time quite easily. Given a grid
representation, we can decompose it in the desired way if we can find a non-trivial partition of the grid into Cartesian products such that no label appears in more than one of the cells of the partition. If two labels occur in some row and also in some column, then the Cartesian product that contains one is forced to contain the other. So the algorithm replaces these two labels by a single label and iterates. If it ends up with just one label, that proves that the example is not decomposable using the label coordinate (and the converse holds too). It then carries out the test for each choice of label coordinate.

Our experiments with this program seem to indicate that indecomposable examples are quite rare, but this may simply be because we have not yet found the right model for random 2-increasing sequences. We have experimented with adding conditions such as choosing at each stage a minimal triple that satisfies an
additional condition. When the program chooses a purely random minimal triple, we stumbled on an example with $n=20$. (More precisely, we stumbled on an example that was almost indecomposable, and could be made indecomposable by
removing a few triples.) This happened only once, and seems to have been quite lucky. Adding the additional condition that the sum of squares of the coordinates is minimized led to the example above -- in this case we set $n=8$ and removed one triple from the randomly generated sequence. 

For large $n$, these random models seem to create 2-increasing sequences of size about $2n$, apart from one model that looks as though it is growing at a rate more like $n^{4/3}$. That model is to take a minimal triple at each stage but to maximize its smallest coordinate (and to make the choice randomly in the case of ties).

For all the examples we know of 2-increasing sequences that attain the bound $(rst)^{1/2}$ it \emph{is} possible to find a decomposition of the kind that could be used for an inductive proof. That leads to the following more precise conjecture.

\begin{conjecture}\label{precise}
Let $T$ be a 2-increasing sequence in $[r]\times[s]\times[t]$. Then $|T|\leq(rst)^{1/2}$, and equality holds only if there is a decomposition of the kind discussed above for some choice of label coordinate.
\end{conjecture}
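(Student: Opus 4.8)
The natural plan is induction on $rst$ (equivalently on $r+s+t$), following the decomposition strategy sketched just above. Given a $2$-increasing sequence $T\subset[r]\times[s]\times[t]$, one chooses a label coordinate, say the third, and looks for a partition of $[r]\times[s]$ into Cartesian blocks $R_i\times S_i$ together with a partition of $[t]$ into parts $T_i$ such that every triple of $T$ lies in some $R_i\times S_i\times T_i$; the inductive hypothesis then bounds $|T|$ by $\sum_i(|R_i||S_i||T_i|)^{1/2}\le\big(\sum_i|R_i||S_i|\big)^{1/2}\big(\sum_i|T_i|\big)^{1/2}=(rst)^{1/2}$ using Cauchy--Schwarz. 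Moreover equality in this chain forces each block to be extremal and the ratios $|T_i|/(|R_i||S_i|)$ to be constant, so the equality characterisation would follow by induction once the (easy) base cases with small $r,s,t$ are checked. The entire difficulty is thus concentrated in producing a decomposition, and the example of Figure~\ref{lastFig} shows that one does not always exist in this exact form; the real content of the conjecture's second clause is therefore that a sequence admitting \emph{no} decomposition must satisfy the \emph{strict} inequality $|T|<(rst)^{1/2}$.

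To handle the non-decomposable case I would try to exploit exactly the obstruction to decomposability. A configuration fails to decompose along the third coordinate precisely when the process of repeatedly merging any two labels that both occur in a common row and a common column collapses all labels into a single class; such a ``connected'' co-occurrence structure forces many labels to interleave in rows and columns, and interleaving labels interact strongly with Conditions 1--3 of the grid representation (labels increasing along rows and columns, label classes $2$-increasing, no transitivity-breaking triple). One route is to relax the notion of decomposition, allowing the pieces of $[r]\times[s]$ to be monotone staircase regions $P_i$ rather than Cartesian products, paired with parts $T_i$ of $[t]$, in such a way that the restriction of $T$ to each $P_i\times T_i$ still embeds into a grid small enough for the inductive bound to combine to $\le(rst)^{1/2}$. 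A second route is to use the continuous relaxation of Problem~\ref{CV} via Lemma~\ref{equiv} and Lemma~\ref{extremal}: an extremal continuous configuration has the same weighted cross-sectional size in each of the three directions, and one would hope to bootstrap this into the statement that an extremal example consists of congruent cuboids arranged so that each axis-parallel slice meets the same number of them --- precisely the structure that does decompose.

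The caveat with the continuous route is that Lemma~\ref{equiv} transfers only the exponent, not the exact finite-$n$ inequality, so on its own it would prove at best $|T|\le(rst)^{1/2+o(1)}$; it would have to be combined with an exact inductive argument for the clean bound and for the equality clause. In the decomposable case the induction runs as above; the only remaining ingredient is the strict-inequality statement for non-decomposable $T$, which one would hope to obtain by quantifying the loss caused by forced label interleaving, for instance by showing that a connected co-occurrence structure produces either a short antichain or a short chain in one of the three projections $\pi_{12}(T),\pi_{13}(T),\pi_{23}(T)$, each of which is an injective image of $T$.

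The step I expect to be the genuine obstacle is understanding the structure of non-decomposable $2$-increasing sequences, equivalently of $2$-increasing (hence acyclic) sets whose size is close to $(rst)^{1/2}$. This is exactly the gap the authors of the present paper were unable to close: Figure~\ref{lastFig} rules out the naive guess, and a new structural result about acyclic sets of triples --- of the kind alluded to at the end of Section~\ref{2increasingcase} --- appears to be needed before either of the refinements above can be made to work, and before one can be confident that indecomposability really does cost a constant factor rather than merely being compatible with the extremal bound.
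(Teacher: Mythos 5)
This statement is a \emph{conjecture} in the paper, not a theorem: the authors explicitly say they were unable to prove the $n^{3/2}$ upper bound and offer no proof of Conjecture~\ref{precise}, only the supporting discussion around Figure~\ref{lastFig}. So there is no paper proof to compare against, and your submission is, correctly, a proposal rather than a proof.

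Your sketch reproduces faithfully the inductive scheme the paper has in mind: if $T$ decomposes into blocks $R_i\times S_i\times T_i$, then induction plus Cauchy--Schwarz in the form $\sum_i(|R_i||S_i|)^{1/2}|T_i|^{1/2}\le(\sum_i|R_i||S_i|)^{1/2}(\sum_i|T_i|)^{1/2}=(rst)^{1/2}$ gives the bound, and the equality case forces extremal blocks with $|T_i|$ proportional to $|R_i||S_i|$. You also correctly identify the precise obstruction: Figure~\ref{lastFig} exhibits a $2$-increasing, non-decomposable sequence, so the decomposition cannot be assumed to exist, and the whole problem collapses to showing that non-decomposable sequences satisfy a \emph{strict} inequality. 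That is exactly the gap the paper leaves open, and exactly why the authors flag Lemma~\ref{extremal} (their only handle on extremality) as potentially important. Your two proposed escape routes --- relaxing blocks to monotone staircase regions, and passing through the continuous formulation --- are both sensible, and you correctly note the limitation of the continuous route: Lemma~\ref{equiv} only transfers exponents, so by itself it can never yield the exact finite-$n$ inequality or the equality characterisation.

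The one place where your write-up is slightly too optimistic is the phrase ``once the (easy) base cases with small $r,s,t$ are checked.'' The base case of this induction is not small $r,s,t$ but \emph{any} $T$ admitting no non-trivial decomposition, and that is precisely the hard case; Figure~\ref{lastFig} shows such $T$ exist even at modest sizes, so the base case is not easy and indeed is the entire content of the conjecture. You do come around to saying this at the end, but the earlier sentence reads as if the base case were a routine check. Beyond that, your assessment matches the paper's: the conjecture remains open, and the missing ingredient is a structure theorem for non-decomposable (equivalently, ``connected'' in the label co-occurrence sense) acyclic sequences, quantifying the loss in $|T|$ that indecomposability forces.
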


The non-decomposable example given above not extremal, since $15$ is quite a bit smaller than $(6\times 7\times 8)^{1/2}$, so it does not disprove this conjecture. However, it shows that in order to prove the existence of a decomposition, it is necessary to use the extremality somehow, and it is not obvious how to do that. (This is why we felt that Lemma \ref{extremal} could turn out to be important.) 

It is possible to go one step further than Conjecture~\ref{precise} in the hope of classifying all 2-increasing sequences of length $(rst)^{1/2}$. For this purpose we tentatively formulate the following conjecture, which has survived some small-scale searches for counterexamples.

\begin{conjecture}\label{class}
	Let $T$ be a 2-increasing sequence in $[r]\times[s]\times[t]$. Then $|T|\leq(rst)^{1/2}$, and equality holds if and only if it can be built up as follows:
	\begin{enumerate}
		\item Choose a coordinate, say the third without loss of generality, and partition $[r]\times[s]$ into sets $R_i\times S_i$ where all of the $R_i$ and $S_i$ are intervals. Using the obvious ordering on disjoint intervals (and calling intervals incomparable if they intersect), ensure that the rectangles $R_i\times S_i$ are ordered in a 1-increasing fashion.
		\item Partition $[t]$ into an increasing set of disjoint intervals $T_i$ such that $|R_i||S_i|$ is proportional to $T_i$ (so that we get equality when we apply Cauchy-Schwarz).
		\item Put extremal examples into the sets $R_i\times S_i\times T_i$.
		\item If it is possible to permute two rows, columns or labels while preserving the 2-increasing property (with a different order) then feel free to do so.
	\end{enumerate}
\end{conjecture}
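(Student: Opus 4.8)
The plan is to prove the inequality $|T|\le(rst)^{1/2}$ and the classification of the equality cases \emph{simultaneously}, by induction on $rst$ (equivalently on $r+s+t$). The base cases are handled by the trivial bound $\min\{rs,rt,st\}$ together with Lemma~\ref{easybound} and a handful of explicit small-grid checks of the sort used in the proof of Proposition~\ref{connect}; such grids are too small to be extremal except in degenerate ways. For the inductive step the engine is the decomposition idea discussed in Section~\ref{conc}: if $T$ admits a coordinate (say the third) and a partition of $[r]\times[s]$ into rectangles $R_i\times S_i$ together with a partition of $[t]$ into intervals $T_i$ with every triple of $T$ lying in some $R_i\times S_i\times T_i$, then applying the inductive hypothesis inside each block and then Cauchy--Schwarz gives $|T|\le\sum_i(|R_i||S_i||T_i|)^{1/2}\le(rst)^{1/2}$, with equality forcing each block to be extremal and $|R_i||S_i|$ proportional to $|T_i|$, which is exactly items~2 and~3 of the conjecture. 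The counterexample of Figure~\ref{lastFig} shows that a decomposition need not exist in general, so the whole difficulty is concentrated in one claim: an extremal (indeed, any $T$ with $|T|>(rst)^{1/2}$) must admit such a decomposition.

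The first key step I would attempt is a \textbf{cross-section regularity} statement: if $T$ is extremal (or even just within a $(1+o(1))$ factor of the bound), then in some coordinate the $t$ label-sets $S(1),\dots,S(t)$ all have size about $(rs/t)^{1/2}$, and similarly for the other two families of planes — in other words the function $C(r,s,t)$ from the conjecture stated just after Lemma~\ref{extremal} really exists for extremal sequences. To get this I would pass to the continuous picture via Lemma~\ref{equiv} and apply Lemma~\ref{extremal}: a continuous extremiser has all cross sections of equal $f$-value, and if all its cuboids are congruent this forces every cross section (avoiding faces) to meet the same number of cuboids; a quantitative/stability version of this transferred back through the discretisation should give the claimed uniformity of the $|S(z)|$, $|S(y)|$ and $|S(x)|$.

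The second key step is to feed this regularity into the acyclic constraint to force the interval structure of items~1--3. Within a single plane $(\ast,\ast,z)$ the triples of $T$ already form a 2-increasing sequence of pairs (a staircase), and the transitivity-breaking pattern of Figure~\ref{Fig1} forbids a label from reappearing ``above and to the left'' of a later label or ``below and to the right'' of one; the rigidity results behind Lemmas~\ref{preK} and~\ref{key} show how strong the acyclic condition is on its own. Under cross-section regularity these forbidden patterns should propagate to show that the supports $S(z)$ are linearly ordered in the skew sense and in fact tile $[r]\times[s]$ by axis-aligned rectangles $R_i\times S_i$ arranged in a 1-increasing fashion, with consecutive labels sharing a rectangle. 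Item~4 is then routine: row/column/label permutations that preserve the 2-increasing property preserve the construction, and conversely two rectangular tilings of the same extremal $T$ differ only by such permutations. Finally one recurses inside each block $R_i\times S_i\times T_i$ by the inductive hypothesis, and the proportionality $|R_i||S_i|\propto|T_i|$ drops out of the equality condition in Cauchy--Schwarz.

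The main obstacle is precisely the passage from ``extremal'' to ``decomposable.'' Lemma~\ref{extremal} only equalises the one-dimensional integral $f$ over cross sections; by itself it does not say the supports are intervals, nor does it even yield the bare bound $|T|\le(rst)^{1/2}$, which remains open (it is Conjecture~\ref{IncUB}). So an honest assessment is that this plan is conditional on a quantitative structural understanding of near-extremal acyclic sets of triples that we do not currently possess: what is really needed is a stability theorem of the form ``$|T|>(rst)^{1/2}-\epsilon(rst)^{1/3}$ implies a rectangular decomposition exists,'' and the $15$-triple indecomposable example in $[6]\times[7]\times[8]$ shows that no purely qualitative argument can work — the extremality has to be used, and finding the right way to exploit it is the heart of the matter. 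A secondary obstacle is making the discretisation of Lemma~\ref{equiv} lossless enough to transfer \emph{exact} extremality rather than extremality up to an $n^{o(1)}$ factor, which is what the ``if and only if'' (as opposed to just the bound) demands.
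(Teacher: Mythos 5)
The statement you were asked to prove is Conjecture~\ref{class}, which the paper does \emph{not} prove: it is formulated tentatively at the end of Section~\ref{conc} as an open problem, supported only by small-scale computer searches for counterexamples. There is no proof in the paper for you to be matched against, and your write-up correctly recognises this.

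Your proposal faithfully reconstructs the strategy the paper gestures at: reduce via the decomposition idea (partition $[r]\times[s]$ into rectangles $R_i\times S_i$ and $[t]$ into intervals $T_i$, then induct and apply Cauchy--Schwarz), and use Lemma~\ref{extremal} through Lemma~\ref{equiv} to exploit extremality. You also correctly identify the two genuine gaps, which are exactly the gaps the paper itself leaves open. First, even the bare inequality $|T|\le(rst)^{1/2}$ is Conjecture~\ref{IncUB} and is unproved; the best the paper achieves is the $n^{2-\epsilon}$ bound of Theorem~\ref{PB}, with $\epsilon$ astronomically small. Second, the step ``extremal implies decomposable'' is precisely what is missing: the $15$-triple indecomposable sequence in $[6]\times[7]\times[8]$ (Figure~\ref{lastFig}) shows decomposability is not automatic, so any proof must genuinely use extremality, and Lemma~\ref{extremal} only gives equality of the cross-sectional integral $f$, not the interval/rectangle structure items~1--3 require. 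Your honest framing that what is really needed is a stability theorem of the shape ``near-extremal implies decomposable'' is the right diagnosis, and your secondary worry about Lemma~\ref{equiv} losing an $n^{o(1)}$ factor (so that it transfers near-extremality but not exact extremality) is also a real issue for the ``if and only if.'' In short: this is a reasonable and well-aligned research plan, correctly self-assessed as conditional, not a proof -- which is consistent with the fact that the paper states this as a conjecture rather than a theorem.
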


Note that the fourth operation above is necessary for the conjecture to be true. An example that shows why is the sequence given by grid representation
\[\young(~~2,~2~,~~1,2~~,~1~,1~~)\]
which will not decompose using just the first three operations.

It would also be extremely interesting to obtain a non-trivial power-type upper bound for the 2-comparable problem, especially now we know that $n^{3/2}$ is not the right lower bound. Another reason for being interested in this problem is that, as we have shown, it is closely related to some central and quite long-standing problems in extremal hypergraph theory and additive combinatorics.

Finally, there are many interesting generalizations of Loh's original problem, from the minimalist variants and extremal hypergraph problems described in Section \ref{waffle} to the generalized $[r,s]$ problems studied in the previous section. Many of the resulting questions are not yet answered, and some of them look as though they may be approachable. Perhaps the most annoying question to which we do not know the answer is the following.

\begin{question}
Is there a single pair of integers $1\leq s<r$ for which the trivial upper bound of $n^{r-s+1}$ for the size of the largest $s$-comparable subset of $[n]^r$ can be improved by a non-trivial power of $n$?
\end{question}

\end{document}